\newtheorem{theorem}{Theorem}
\newtheorem{lemma}[theorem]{Lemma}
\newtheorem{definition}[theorem]{Definition}
\DeclareMathOperator{\argmin}{arg min}
\DeclareMathOperator{\mass}{mass}
\newcommand{\grad}{\nabla}
\newcommand{\RR}{\mathbb{R}}
\newcommand{\NN}{\mathbb{N}}
\newcommand{\ZZ}{\mathbb{Z}}
\newcommand{\Z}{\mathbb{Z}}
\newcommand{\TT}{\mathrm{T}}
\newcommand{\wt}[1]{\widetilde{#1}}
\newcommand{\mm}{u}
\newcommand{\ud}{\,\mathrm{d}}
\newcommand{\mc}[1]{\mathcal{#1}}
\newcommand{\eps}{\epsilon}
\newcommand{\abs}[1]{\lvert#1\rvert}
\newcommand{\norm}[1]{\left\lVert#1\right\rVert}
\renewcommand{\Re}{\mathfrak{Re}}
\newcommand*{\extendadd}{
  \mathbin{
    \mathpalette\extend@add{}
  }
}
\newcommand*{\extend@add}[2]{
  \ooalign{
    $\m@th#1\leftrightarrow$%
    \vphantom{$\m@th#1\updownarrow$}
    \cr
    \hfil$\m@th#1\updownarrow$\hfil
  }
}
\begin{document}
\title{$3D$ Crystal Image Analysis based
    on\\ Fast Synchrosqueezed Transforms}
    
    \author{Tao Zhang \\ Department of Mathematics\\ National University of Singapore\\  \href{mailtoz{\_}tao@u.nus.edu}{z{\_}tao@u.nus.edu} 
   \and Ling Li \\ Department of Mechanical Engineering\\Virginia Polytechnic Institute and State University\\ \href{mailto:lingl@vt.edu}{lingl@vt.edu}
     \and Haizhao Yang \\ Department of Mathematics\\ National University of Singapore, Singapore\\ \href{mailto:haizhao@nus.edu.sg}{haizhao@nus.edu.sg} }

\maketitle

\begin{abstract}
We propose an efficient algorithm to analyze $3D$ atomic resolution crystal images based on a fast $3D$ synchrosqueezed wave packet transform. The proposed algorithm can automatically extract microscopic information from $3D$ atomic resolution crystal images, e.g., crystal orientation, defects, and deformation, which are important information for characterizing material properties. The effectiveness of our algorithms is illustrated by experiments of synthetic datasets and real $3$D microscopic colloidal images. 
\end{abstract}

{\bf Keywords.} Crystal defect, elastic deformation, crystal rotation, $3D$ general wave shape, $3D$ band-limited synchrosqueezed transforms.

{\bf AMS subject classifications}: 65T99,74B20,74E15,74E25

\maketitle

\section{Introduction}
\label{sec:intro}
The microstructure of materials, e.g. the dynamics of defects (e.g., grain boundaries and isolated dislocations) and grain deformation, is one of the key factors that determins the physical properties of crystalline materials  \cite{callister1991materials,hull2001introduction,snyder2011complex}. A thorough understanding of the role of the microscopic dynamics helps the design of advanced functional materials in many applications. To study the impact of the microscopic dynamics, it is necessary to obtain experimental data throughout the crystal fabrication process and monitor the change of microstructure. A major bottleneck in this process is then analyzing this dynamic information, which often involves time-consuming manual structural identification and classification, and sometimes is even beyond the capability of manual evaluation, e.g., grain deformation.

For the purpose of automatical and efficient data analysis, there has been extensive research in designing crystal image analysis tools \cite{belianinov2015,keen2015,sutton1995,wadhawan2014,elsey2013,elsey2014,Yang2015,LU2016,Lu2018,Zosso2017,GrainGeo,berkels2008,boerdgen2010} for $2D$ data coming from advanced imaging techniques (such as high resolution transmission electron microscopy (HR-TEM) \cite{king1998}), mean field models like phase field crystals \cite{PhysRevE.70.051605}, and the atomic simulation of molecular dynamics \cite{abraham2002}. These algorithms include the famous variational methods for texture classification and segmentation  (see \cite{Mumford,Meyer:2001, Chan:01, Vese:02, Sandberg:02, Vese:03,Aujol2006} for example) with adaptation to crystal image analysis \cite{Berkels:08,Berkels:10,Strekalovskiy:11,ElseyWirth:13,ElseyWirth:MMS}, the phase-space analysis methods \cite{Unser:95, SingerSinger:06,Yang2015,LU2016,Lu2018}, the algorithms based on atom positions \cite{stukowski2010,GrainGeo}, and more recently deep learning approaches \cite{DL1,DL2,DL3,DL4,DL5}.

However, the majority of existing crystal analysis methods in the literature are limited to $2$D data and hence cannot meet the demand of $3D$ materials synthesis \cite{3Dimportant}. To the best of our knowledge, only a few $3D$ crystal image analysis methods were previously reported (\cite{Elsey2015} for grain segmentation assuming that the crystal lattice was known, \cite{DL1} for the identification of lattice symmetry of simulated crystal structures). An automatic algorithm for a complete analysis including lattice classification, grain segmentation, defect detection, deformation estimation, etc., is still not available. This motivates the development of a complete framework to analyze $3D$ atomic resolution crystal images in this paper.

Our first contribution is the development of a fast $3D$ synchrosqueezed transform (SST) based on a $3D$ wave packet transform with geometric scaling parameters to control the support sizes of wave packets, making the $3D$ SST adaptive to complicated atomic configurations. A band-limited version of the SST significantly speeds up its application to $3D$ data making it practical for large images. Our second contribution is to extend the algorithms in \cite{Yang2015,LU2016,Lu2018} to $3D$ based on the $3D$ SST. In $3D$ space, the atomic configurations become much more complicated: the rotation of crystal lattices is characterized by two sphere angles, and there are much more classes of lattices in $3D$ than $2D$ as shown in Figure \ref{fig:Bravais} including triclinic, monoclinic, orthorhombic, tetragonal, hexagonal and cubic. Finally, the proposed method is a model-based method that could work for different kinds of atomic resolution crystal images from real experiments and computer simulations, which is different to data-based methods (e.g., deep learning methods \cite{DL1,DL2,DL3,DL4,DL5}) that are sensitive to training data (e.g., neural networks trained with synthetic data might not work for experimental data). Analysis results of $3D$ experimental data are usually limited (too expensive to obtain manually) and even not available (impossible for manual measurement). Hence, the proposed algorithm in this paper could serve as a useful tool to prepare training data from real experiments for data-based approaches.
  
\begin{figure}[ht!]
  \begin{center}
    \begin{tabular}{c}
      \includegraphics[height=3in]{./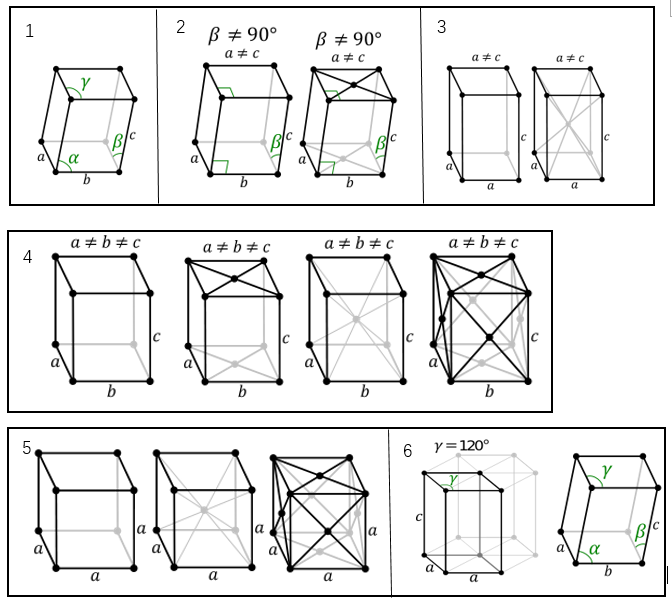}
    \end{tabular}
  \end{center}
  \caption{Six fundamental $3D$ Bravais lattices: 1 triclinic, 2 monoclinic, 3 tetragonal, 4 orthorhombic, 5 cubic, and 6 hexagonal. Courtesy of Wikipedia.}
  \label{fig:Bravais}
\end{figure}   

The rest of the paper is organized as follows. In section \ref{sec:model}, we introduce an atomic resolution crystal image model based on $3D$ general intrinsic mode type functions, and prove that the $3D$ SST is able to estimate the local properties of atomic resolution crystal images. In section \ref{sec:imp},  we present a fast $3D$ band-limited synchrosqueezed wave packet transform to detect crystal defects, estimate crystal rotations and elastic deformations. In Section \ref{sec:results}, several numerical examples of synthetic and real crystal images are provided to demonstrate the robustness and the reliability of our methods. Finally, we conclude with some discussion in Section \ref{sec:con}.

\section{Theory of $3D$ SST and atomic resolution crystal analysis}
\label{sec:model}
This section consists of three parts: In the first part, we introduce a new $3D$ SST based on $3D$ wave packet transform with a geometric scaling parameter $s\in (\frac{1}{2},1)$ to control the support sizes of wave packets. In the second part, we introduce $3D$ general intrinsic mode type functions to model atomic resolution crystal images. In the last part, we prove that the $3D$ SST is able to estimate local wave vectors of general intrinsic mode type functions accurately, providing useful information for crystal image analysis.   

\subsection{$3D$ SST}

One-dimensional SSTs are well-developed tools for empirical mode decomposition and time-frequency analysis \cite{Daubechies1996,Daubechies2011,Auger13,1DSSWPT,SSSecond,Daubechies20150193,app7080769} with better robustness in analyzing noisy signals than the short-time Fourier transform \cite{yang2018statistical,yang2015quantitative}. Two-dimensional SSTs have been proposed recently in \cite{SSWPT,SSCT,2Dwavelet}. However, $3D$ synchrosqueezed transforms have not been explored previously. Motivated by $3D$ crystal image analysis, we propose the $3D$ SST based on wave packet transforms (SSWPT) with a geometric scaling parameter $s$ as follows. From now on, we will use $n$ to denote the dimension in this section. The proposed transform and analysis later work for $n=1$, $2$, and $3$. Throughout this paper, the spatial variable would be denoted as $x$ or $b$, and the variable in the Fourier domain would be denoted as $\xi$, $a$, or $v$.

First, we introduce an $n$-dimensional mother wave packet $w(x)\in C^m(\RR^{n})$ of type $(\eps,m)$ such that $\widehat{w}(\xi)$ has an essential support in the unit ball $B_1(0)$ centered at the frequency origin with a radius $1$\footnote{In our numerical implementation, the mother wave packet has an essential support $B_d(0)$ in the Fourier domain, where $d$ is an adjustable parameter.}, i.e., 
\[
|\widehat{w}(\xi)|\leq \frac{\eps}{(1+|\xi|)^m},
\] 
for $|\xi|>1$ and some non-negative integer $m$. A family of $n$-dimensional wave packets is obtained by isotropic dilation, rotations and translations of the mother wave packet as follows, controlled by
a geometric parameter $s$.
\begin{definition}
  \label{2.2def:WA2d}
  Given the mother wave packet $w(x)$ of type $(\eps,m)$ and the parameter $s\in(1/2,1)$,
  the family of wave packets $\{w_{a b}(x): a,b\in \RR^{n}, |a|\ge 1\}$
  are defined as
  \[
  w_{a b}(x)=|a|^{ns/2} w\left(|a|^s(x-b)\right) e^{2\pi i (x-b)\cdot a},
  \]
  or equivalently in the Fourier domain
  \[
  \widehat{w_{ab}}(\xi) = |a|^{-ns/2} e^{-2\pi i b\cdot \xi}
  \widehat{w}\left(|a|^{-s}(\xi-a)\right).
  \]
\end{definition}
In this definition, we require $|a|\ge 1$. The reason is that, when
$|a|<1$, the above consideration regarding the shape of the wave
packets is no longer valid. However, since we are mostly concerned
with the high frequencies as the signals of interest here are
oscillatory, the case $|a|<1$ is essentially irrelevant.

Some properties can be seen immediately from the definition: the
Fourier transform $\widehat{w_{ab}}(\xi)$ is essentially supported in
$B_{|a|^s}\left(a\right)$, a ball centered at $a$ with a radius $|a|^s$; $w_{ab}(x)$ is centered in space at $b$ with an essential
support of width $O\left(|a|^{-s}\right)$. An $n$-dimensional SSWPT with a smaller $s$ value is better at distinguishing two intrinsic mode type functions with close propagating directions and is more robust \cite{yang2018statistical} against noise. This is the motivation to propose the wave packet transform here instead of adopting the wavelet transform corresponding to the case of $s=1$. With this family of wave packets, we define the wave packet transform as follows.
\begin{definition}
  \label{2.2def:WAT}
  The wave packet transform of a function $f(x)$ is a function
  \begin{align*}
    W_f(a,b) 
    &= \langle f,w_{a b}\rangle =  \int_{\RR^{n}} f(x) \overline{w_{a b}(x)}dx \label{2.2eqn:WAT}  \nonumber
  \end{align*}
  for $a,b\in \RR^{n}, |a|\ge 1$.
\end{definition}
If the Fourier transform $\widehat{f}(\xi)$ vanishes for $|\xi|< 1$, it is
easy to check that the $L^2$-norms of $W_f(a,b)$ and $f(x)$ are
equivalent, up to a uniform constant factor, i.e.,
\begin{equation}
  \int_{\RR^{2n}} |W_f(a,b)|^2 da db \eqsim \int_{\RR^{n}} |f(x)|^2 dx.  \label{2.2eqn:ENEEQ}
\end{equation}

\begin{definition} 
  \label{2.2def:LWV}
  The local wave vector estimation of a function $f(x)$ at
  $(a,b)\in\RR^{2n}$ is
  \begin{equation*}
    v_f(a,b)=\begin{cases}
      \frac{ \grad_b W_f(a,b) }{ 2\pi i W_f(a,b)},
      & \text{for }W_f(a,b)\neq 0;\\
      \left(\infty,\infty\right), 
      & otherwise.
    \end{cases}
\label{2.2eqn:IWE}
  \end{equation*}
\end{definition}
Given the wave vector estimation $v_f(a,b)$, the synchrosqueezing step
reallocates the information in the phase space and provides a sharpened
phase space representation of $f(x)$ in the following way.
\begin{definition}
  Given $f(x)$, the SST (or synchrosqueezed energy
  distribution), $T_f(v,b)$, is defined by
  \begin{equation*}
    T_f(v,b) = \int_{\RR^{n}\setminus B_1(0)} |W_f(a,b)|^2 \delta\left(\Re v_f(a,b)-v\right) da \label{2.2eqn:SED}
  \end{equation*}
  for $v,b\in \RR^{n}$.
\end{definition} 

As we shall see, for $f(x) = \alpha(x) e^{2\pi i N\phi(x)}$ with
a sufficiently smooth amplitude $\alpha(x)$ and a sufficiently steep phase
$N\phi(x)$, we can show that for each $b$, the estimation $v_f(a,b)$
indeed approximates $N\grad \phi(b)$ independently of $a$ as long as
$W_f(a,b)$ is non-negligible. As a direct consequence, for each $b$,
the essential support of $T_f(v,b)$ in the $v$ variable concentrates
near $N\grad \phi(b)$ (see Figure \ref{2.2fig:2} for an example). In
addition, we have the following property
\[
\int T_f(v,b) dv db =
\int |W_f(a,b)|^2 \delta(\Re v_f(a,b)-v) dv da db =
\int |W_f(a,b)|^2 dadb \eqsim \|f\|_2^2
\]
from Fubini's theorem and the norm equivalence \eqref{2.2eqn:ENEEQ}, for
any $f(x)$ with its Fourier transform vanishing for $|\xi|< 1$.
\begin{figure}[ht!]
  \begin{center}
    \begin{tabular}{cc}
      \includegraphics[height=1.6in]{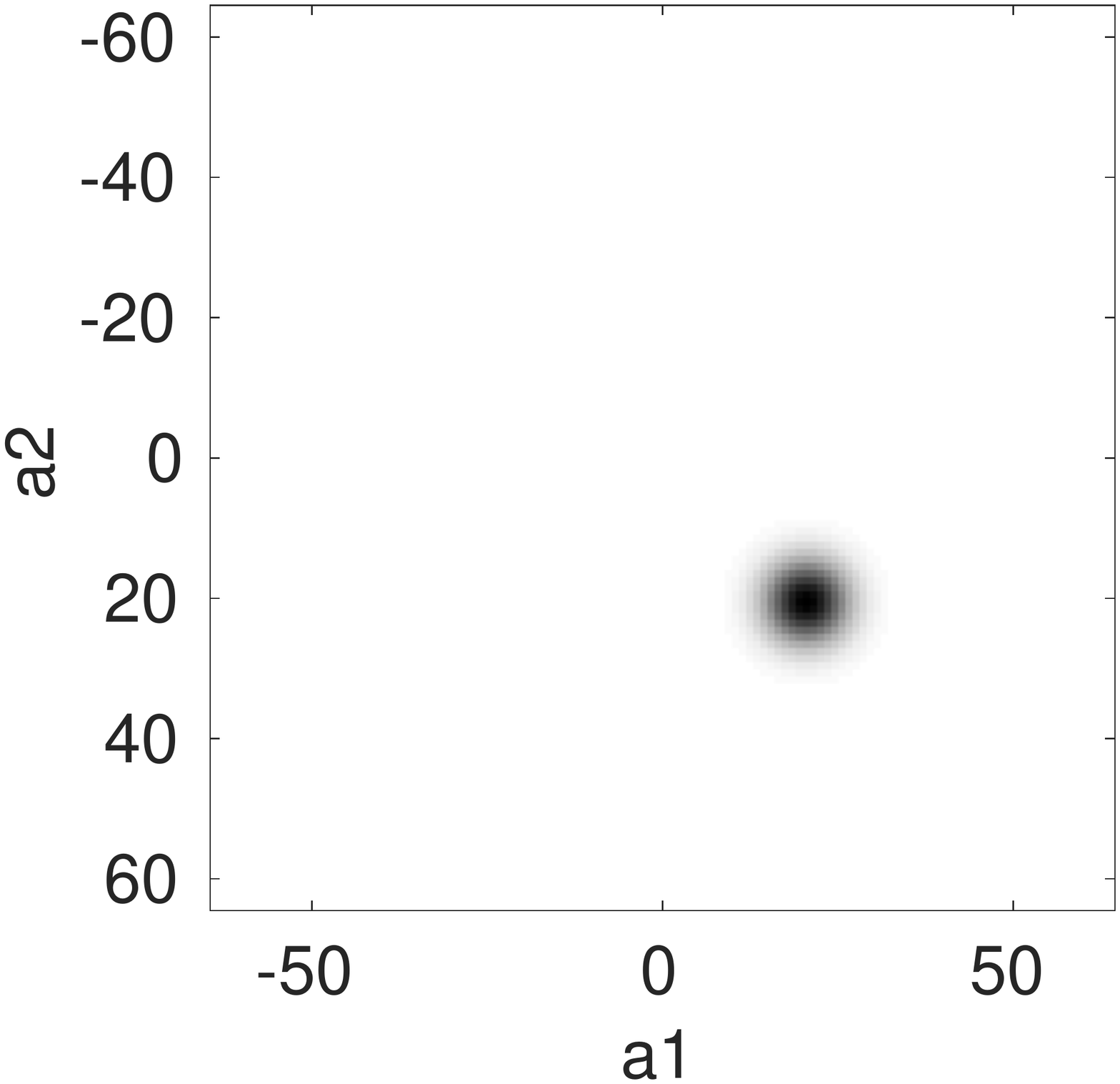}  & \includegraphics[height=1.6in]{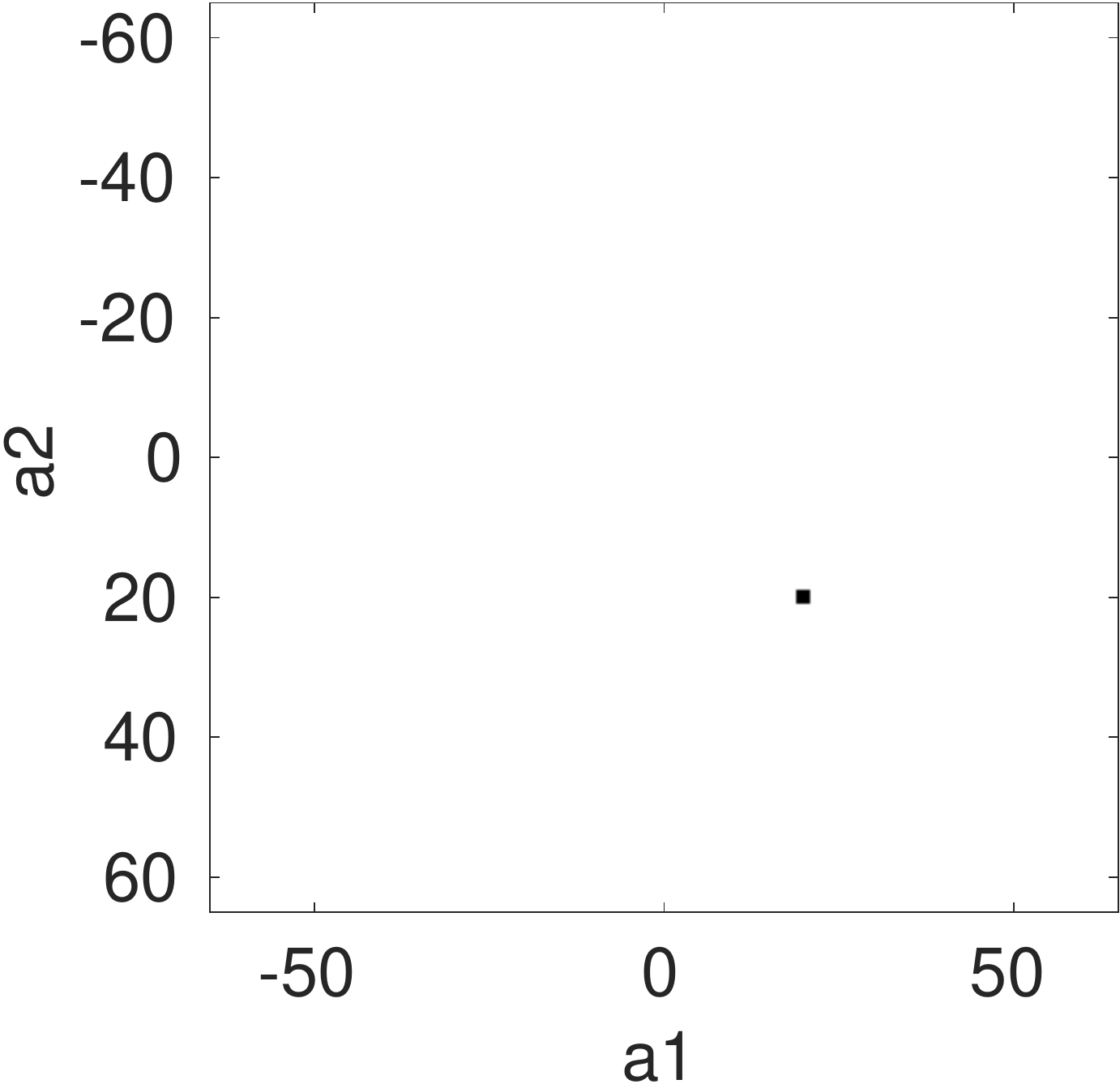}
    \end{tabular}
  \end{center}
  \caption{Suppose the data $f(x)=e^{2\pi iN(x_1+x_2+x_3)}$ with $N = 20$ for $x=(x_1,x_2,x_3)\in [0,1]^3$. Left: $|W_f(a,b)|$ at $b=(0.5,0.5,0.5)$ and $a_3=20$. Right: $T_f(a,b)$ at the same
    location $b$ and $a_3$. $|W_f(a,b)|$ has been reallocated to form a sharp phase
    space representation $T_f(a,b)$.}
  \label{2.2fig:2}
\end{figure}

Now we show that the SST can distinguish well-separated local wavevectors $\{N\nabla \phi_k(x)\}_{1\leq k\leq K}$ from a
superposition of multiple components $f(x)=\sum_{k=1}^K \alpha_k(x)e^{2\pi i N\phi_k(x)}$.

\begin{definition}
  \label{mSSWPT:def:IMTF}
  A function $f(x)=\alpha(x)e^{2\pi iN \phi(x)}$ is an intrinsic mode type
  function (IMT) of type $\left(M,N\right)$ if $\alpha(x)$ and $\phi(x)$ satisfy
  \begin{align*}
    \alpha(x)\in C^\infty, \quad |\grad \alpha(x)|\leq M, \quad 1/M \leq \alpha(x)\leq M \\
    \phi(x)\in C^\infty,  \quad  1/M \leq |\grad \phi(x)|\leq M, \quad |\grad^2 \phi(x)|\leq M.
   \end{align*}
\end{definition}
\begin{definition}
  \label{mSSWPT:def:SWSIMC}
  A function $f(x)$ is a well-separated superposition of type
  $\left(M,N,K,s\right)$ if
  \[
  f(x)=\sum_{k=1}^K f_k(x)
  \] 
  where each $f_k(x)=\alpha_k(x)e^{2\pi iN_k \phi_k(x)}$ is an IMT of type $\left(M,N_k\right)$ with $N_k\geq N$ and the phase functions satisfy the
  separation condition: for any $(a,b)\in \mathbb{R}^{2n}$, there exists at most one $f_k$ satisfying that 
  \[
\left|a\right|^{-s}\left| a-N_k\grad\phi_k(b) \right|\leq 1.
  \]
  We denote by $F\left(M,N,K,s\right)$ the set of all
  such functions.
\end{definition}

The following theorem illustrates the main results of the $n$-dimensional SST for a superposition of IMTs. 
In what follows, when we write $O\left(\cdot\right)$, $\lesssim$, or $\gtrsim$, the implicit constants may depend on $M$, $m$ and $K$.
\begin{theorem}
  \label{mSSWPT:thm:2d1}
  Suppose the n-dimensional mother wave packet is of type $(\epsilon,m)$, for any fixed $\eps\in(0,1)$ and any fixed integer $m\geq 0$.
  For a function $f(x)$, we define
  \[
  R_{\eps} = \{(a,b): |W_f(a,b)|\geq |a|^{-ns/2}\sqrt \eps\},
  \]
    \[
  S_{\eps} = \{(a,b): |W_f(a,b)|\geq \sqrt \eps\},
  \]
  and 
  \[
  Z_{k} = \{(a,b): |a-N_k\grad  \phi_k(b)|\leq |a|^s \}
  \]
  for $1\le k\le K$. For fixed $M$, $m$, and $K$ there
  exists a constant $N_0\left(M,m,K,s,\eps\right)\simeq \max\left\{ \epsilon^{\frac{-2}{2s-1}},\epsilon^{\frac{-1}{1-s}} \right\}$ such that for any
  $N>N_0$ and $f(x)\in F\left(M,N,K,s\right)$ the following statements
  hold.
  \begin{enumerate}
  \item $\{Z_{k}: 1\le k \le K\}$ are disjoint and $S_{\eps}\subset R_{\eps}
    \subset \bigcup_{1\le k \le K} Z_{k}$;
  \item For any $(a,b) \in R_{\eps} \cap Z_{k}$, 
    \[
    \frac{|v_f(a,b)-N_k\grad \phi_k(b)|}{ |N_k \grad \phi_k(b)|}\lesssim\sqrt \eps;
    \]
     \item For any $(a,b) \in S_{\eps} \cap Z_{k}$, 
    \[
    \frac{|v_f(a,b)-N_k\grad \phi_k(b)|}{ |N_k \grad \phi_k(b)|}\lesssim N_k^{-ns/2}\sqrt \eps.
    \]
  \end{enumerate}
\end{theorem}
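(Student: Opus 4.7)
The plan is to reduce the analysis of $W_f(a,b)$ to a careful single-component analysis of each $W_{f_k}(a,b)$, by Taylor-expanding the amplitude $\alpha_k$ and phase $N_k\phi_k$ locally around $b$, then invoking the essential band-limiting of $\widehat{w}$ together with the separation condition of Definition~\ref{mSSWPT:def:SWSIMC} to handle cross contributions.

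For a single intrinsic mode type function $f_k(x)=\alpha_k(x)e^{2\pi i N_k\phi_k(x)}$, I would expand $\alpha_k(x)=\alpha_k(b)+O(M|x-b|)$ and $N_k\phi_k(x)=N_k\phi_k(b)+N_k\grad\phi_k(b)\cdot(x-b)+O(N_k M|x-b|^2)$ inside the defining integral, then change variables $y=|a|^s(x-b)$ to arrive at the main-plus-remainder decomposition
\[
W_{f_k}(a,b)=\alpha_k(b)e^{2\pi i N_k\phi_k(b)}|a|^{-ns/2}\overline{\widehat{w}\!\left(\frac{N_k\grad\phi_k(b)-a}{|a|^s}\right)}+R_k(a,b),
\]
with the Taylor remainder controlled, using that $|x-b|\lesssim|a|^{-s}$ on the effective support of $w_{ab}$, by $|R_k(a,b)|\lesssim|a|^{-ns/2}(|a|^{-s}+N_k|a|^{-2s})$. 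When $(a,b)\in Z_k$ the argument of $\widehat{w}$ lies in the unit ball and the main term has magnitude $\simeq|a|^{-ns/2}$; when $(a,b)\notin Z_k$ the decay $|\widehat{w}(\xi)|\le\eps(1+|\xi|)^{-m}$ yields a main-term bound $\lesssim|a|^{-ns/2}\eps$.

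For part~(1), the disjointness of the $\{Z_k\}$ is immediate from the separation condition: if $(a,b)\in Z_j\cap Z_k$ with $j\ne k$, then both $|a|^{-s}|a-N_j\grad\phi_j(b)|\le 1$ and $|a|^{-s}|a-N_k\grad\phi_k(b)|\le 1$, contradicting Definition~\ref{mSSWPT:def:SWSIMC}. The inclusion $S_\eps\subset R_\eps$ is trivial from $|a|\ge 1$. For $R_\eps\subset\bigcup_k Z_k$ I would argue by contrapositive: off every $Z_k$, summing the single-component bound yields $|W_f(a,b)|\lesssim|a|^{-ns/2}(\eps+|a|^{-s}+N|a|^{-2s})$, which is strictly less than $|a|^{-ns/2}\sqrt{\eps}$ once $N\ge N_0$. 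This is precisely where the exponents arising from the phase remainder $N|a|^{-2s}$ and the amplitude term $|a|^{-s}$ enter in the definition of $N_0$.

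For parts~(2) and~(3), I would differentiate the decomposition above in $b$. The $b$-derivative of $e^{2\pi i N_k\phi_k(b)}$ produces the desired factor $2\pi i N_k\grad\phi_k(b)$ multiplying the main term, so the residual in $\grad_b W_{f_k}-2\pi i N_k\grad\phi_k(b) W_{f_k}$ comes from four sources: (i) $\grad\alpha_k(b)$, contributing $O(|a|^{-ns/2})$; (ii) the $b$-derivative of the $\widehat{w}$ argument, contributing $O(N_k|a|^{-s-ns/2})$; (iii) the combination $[\grad_b-2\pi i N_k\grad\phi_k(b)]R_k$, in which integration by parts in $b$ (using $\grad_b w_{ab}=-2\pi i a\,w_{ab}+\ldots$) yields a clean cancellation on $Z_k$ leaving $O(|a|^s|R_k|)$; and (iv) the contributions of the other components $f_j$ ($j\ne k$) on $Z_k$, which by the separation condition lie outside $Z_j$ and are controlled by the same $\widehat{w}$-decay argument. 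Dividing by $|W_f(a,b)|$ and using $|W_f|\ge|a|^{-ns/2}\sqrt{\eps}$ on $R_\eps$ (respectively $\ge\sqrt{\eps}$ on $S_\eps$), combined with $|a|\simeq N_k$ on $Z_k$, yields the bounds in (2) and (3); the additional factor $N_k^{-ns/2}$ in (3) arises purely from the stronger denominator lower bound.

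The main obstacle I anticipate is the careful bookkeeping of the Taylor-remainder derivatives and cross-component contributions needed to recover both exponents in $N_0\simeq\max(\eps^{-2/(2s-1)},\eps^{-1/(1-s)})$ sharply. The first exponent traces back to the quadratic-phase Taylor remainder $N_k|a|^{-2s}$, which on $Z_k$ scales as $N_k^{1-2s}$ and shrinks only when $s>1/2$, explaining the hypothesis on $s$; the second is forced by the amplitude-gradient and $\grad_b$-of-$\widehat{w}$-argument errors scaling as $|a|^{-s}$. Performing integration by parts in the oscillatory variable—rather than crude pointwise bounds—so as to avoid spurious factors of $|a|$ in the gradient estimates will be the most delicate technical step.
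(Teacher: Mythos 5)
Your single-component decomposition (Taylor expansion of amplitude and phase, rescaling $y=|a|^s(x-b)$, main term $\widehat{w}$ evaluated at the rescaled frequency offset) and the quotient argument for parts~(2)--(3) match the paper's Lemmas~\ref{mSSWPT:lem:A2} and~\ref{mSSWPT:lem:B2} on the frequency band where that expansion is effective. The genuine gap is the regime where $|a|$ is \emph{not} comparable to $N_k$. Your Taylor-remainder bound $|R_k(a,b)|\lesssim |a|^{-ns/2}\bigl(|a|^{-s}+N_k|a|^{-2s}\bigr)$ degenerates there: for $|a|$ of order $1$ (or anything well below $N_k^{1/(2s)}$) the term $N_k|a|^{-2s}$ is enormous, so your contrapositive argument for $R_\eps\subset\bigcup_k Z_k$ --- that off every $Z_k$ the bound is strictly below $|a|^{-ns/2}\sqrt{\eps}$ --- does not follow. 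The same defect reappears in (2)--(3) when you dismiss the cross terms $f_j$, $j\neq k$, on $Z_k$ by ``the same $\widehat{w}$-decay argument'': that decay controls only the main term, while the remainder $R_j$ carries the uncontrolled factor $N_j|a|^{-2s}\simeq N_jN_k^{-2s}$, which is not small when $N_j\gg N_k^{2s}$.

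The paper closes exactly this hole with a separate non-stationary-phase estimate: for $|a|\notin[\tfrac{N_k}{2M},2MN_k]$ the phase gradient of the oscillatory integral satisfies $|\grad g|\gtrsim N_k^{1-s}$, and $r$-fold integration by parts with $L=\frac{1}{i}\frac{\langle\grad g,\grad\rangle}{|\grad g|^2}$ gives $|W_{f_k}(a,b)|\lesssim |a|^{-ns/2}N_k^{-(1-s)r}=|a|^{-ns/2}O(\eps)$ once $N_k\gtrsim\eps^{-1/((1-s)r)}$. You would need to add this estimate to make both part~(1) and the cross-term control in parts~(2)--(3) go through. Note also that this is where the exponent $\eps^{-1/(1-s)}$ in $N_0$ actually originates; your attribution of it to the amplitude-gradient error of size $|a|^{-s}$ is off, since on $Z_k$ one has $|a|\simeq N_k$, so that error only forces $N\gtrsim\eps^{-1/s}$, which is dominated by the other two constraints for $s\in(\tfrac12,1)$.
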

\begin{lemma}
  \label{mSSWPT:lem:A2}
  Suppose $\Omega_a=\{k:|a|\in[\frac{N_k}{2M},2MN_k]\}$. Under the assumption of Theorem \ref{mSSWPT:thm:2d1}, we have
  \begin{equation*}
    \label{E1}
    W_f(a,b)=
      |a|^{-ns/2} \left( \sum_{k\in\Omega_a}\alpha_k(b)e^{2\pi iN_k \phi_k(b)}\widehat{w}\left(|a|^{-s}\left(a-N_k\grad \phi_k(b)\right)\right)
      +O\left(\eps\right) \right),
  \end{equation*}
  when $N>N_0\left(M,m,K,s,\eps\right)\simeq \max\left\{ \epsilon^{\frac{-2}{2s-1}},\epsilon^{\frac{-1}{1-s}} \right\}$.
\end{lemma}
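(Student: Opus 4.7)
The plan is to expand $W_f(a,b) = \sum_k W_{f_k}(a,b)$ by linearity, and then analyze each $W_{f_k}(a,b)$ by reducing it to a perturbation of the mother wave packet's Fourier transform evaluated at $|a|^{-s}(a-N_k\grad\phi_k(b))$. Starting from Definition \ref{2.2def:WAT}, I would substitute $u = |a|^s(x-b)$ to obtain
\[
W_{f_k}(a,b) = |a|^{-ns/2}\int \alpha_k(b+|a|^{-s}u)\, e^{2\pi i N_k\phi_k(b+|a|^{-s}u)}\,\overline{w(u)}\,e^{-2\pi i\,|a|^{-s} a\cdot u}\,du.
\]
This isolates a clean integral over the essential support $|u|\lesssim 1$ of $w$, with $\alpha_k$ and $\phi_k$ sampled in a $|a|^{-s}$-neighborhood of $b$, which is where smoothness of the IMT comes into play.

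Next I would Taylor expand the amplitude and phase about $b$: $\alpha_k(b+|a|^{-s}u) = \alpha_k(b) + O(M|a|^{-s}|u|)$, and $N_k\phi_k(b+|a|^{-s}u) = N_k\phi_k(b) + N_k|a|^{-s}\grad\phi_k(b)\cdot u + O(N_k|a|^{-2s}|u|^2)$. Substituting and factoring out the constant $\alpha_k(b)e^{2\pi iN_k\phi_k(b)}$, the leading term becomes
\[
|a|^{-ns/2}\alpha_k(b)e^{2\pi iN_k\phi_k(b)}\int \overline{w(u)}\,e^{-2\pi i\,|a|^{-s}(a-N_k\grad\phi_k(b))\cdot u}\,du,
\]
and with $w$ real (or by the corresponding parity conventions on $\widehat w$) this integral is exactly $\widehat w\bigl(|a|^{-s}(a-N_k\grad\phi_k(b))\bigr)$, matching the claimed principal term. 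The residuals from Taylor expansion and from $e^{i\theta}-1\simeq i\theta$ in the phase error are integrated against $|w(u)|$ and its polynomial moments, which are finite by the decay assumption $|\widehat w(\xi)|\leq\epsilon/(1+|\xi|)^m$.

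For $k\notin\Omega_a$ I would argue that the contribution is absorbed in the $O(\epsilon)$ error. In this regime either $|a|>2MN_k$ (so $|a-N_k\grad\phi_k(b)|\gtrsim |a|$ and $|a|^{-s}|a-N_k\grad\phi_k(b)|\gtrsim |a|^{1-s}$) or $|a|<N_k/(2M)$ (so $|a|^{-s}|a-N_k\grad\phi_k(b)|\gtrsim N_k^{1-s}$). Feeding this into the wave-packet decay $|\widehat w(\xi)|\leq\epsilon/(1+|\xi|)^m$, together with the lower bound $N_k\ge N$, yields a bound of the form $|W_{f_k}(a,b)|\lesssim |a|^{-ns/2}\epsilon$ provided $N\gtrsim \epsilon^{-1/(1-s)}$; this is where the second component of $N_0$ in the statement comes from.

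The main obstacle is quantitatively calibrating the two error sources so that both fit under the single $O(\epsilon)$ bound. The sharpest constraint comes from the quadratic phase remainder: in $\Omega_a$ one has $|a|\simeq N_k$, so $N_k|a|^{-2s}\simeq N_k^{1-2s}$, and controlling the resulting $e^{2\pi i\cdot O(N_k^{1-2s}|u|^2)}-1$ uniformly in $|u|\lesssim 1$ and across all polynomial-moment integrals against $w$ forces $N\gtrsim\epsilon^{-2/(2s-1)}$, which is the first component of $N_0$. Combining with the tail estimate above gives the stated threshold $N_0\simeq\max\{\epsilon^{-2/(2s-1)},\epsilon^{-1/(1-s)}\}$. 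The remaining steps — bounding the amplitude error $O(|a|^{-s})$, propagating these estimates through the $u$-integral using the type $(\epsilon,m)$ decay, and summing over the bounded number $K$ of components — are routine and absorb into the implicit constants depending on $M$, $m$, and $K$.
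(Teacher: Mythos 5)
Your treatment of the terms $k\in\Omega_a$ follows the paper's proof essentially verbatim: the same change of variables $u=|a|^s(x-b)$, the same Taylor expansion of $\alpha_k$ and $\phi_k$ about $b$, and the same calibration of the quadratic phase remainder $N_k|a|^{-2s}|u|^2\simeq N_k^{1-2s}|u|^2$ yielding the $\eps^{-2/(2s-1)}$ component of $N_0$. The gap is in your handling of $k\notin\Omega_a$. You propose to bound $W_{f_k}(a,b)$ by ``feeding'' the lower bound $|a|^{-s}|a-N_k\grad\phi_k(b)|\gtrsim\max\{|a|^{1-s},N_k^{1-s}\}$ into the decay $|\widehat w(\xi)|\leq\eps/(1+|\xi|)^m$. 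But that decay only controls the \emph{leading} term of your expansion; to invoke it you must first know that $W_{f_k}(a,b)$ equals $|a|^{-ns/2}\alpha_k(b)e^{2\pi iN_k\phi_k(b)}\widehat w\bigl(|a|^{-s}(a-N_k\grad\phi_k(b))\bigr)$ up to an $O(|a|^{-ns/2}\eps)$ error, and your own error analysis for that approximation is valid only when $|a|\simeq N_k$. In the regime $|a|<N_k/(2M)$ the quadratic remainder $N_k|a|^{-2s}|u|^2$ can be as large as $O(N_k)$ (take $|a|\simeq 1$), so the Taylor-expansion route collapses there and nothing in your argument bounds those contributions.

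The paper closes this case by a different mechanism: a non-stationary phase estimate. Writing $W_{f_k}(a,b)=|a|^{-ns/2}\int h(y)e^{ig(y)}\,dy$ with $g(y)=2\pi\bigl(N_k\phi_k(b+|a|^{-s}y)-|a|^{-s}y\cdot a\bigr)$, one checks that $|\grad g|\gtrsim N_k^{1-s}$ whenever $|a|\notin[\tfrac{N_k}{2M},2MN_k]$, and $r$ integrations by parts with the operator $L=\frac{1}{i}\frac{\langle\grad g,\grad\rangle}{|\grad g|^2}$ give $|W_{f_k}(a,b)|\lesssim|a|^{-ns/2}N^{-(1-s)r}\lesssim|a|^{-ns/2}\eps$ once $N\gtrsim\eps^{-1/((1-s)r)}$. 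This oscillatory-integral argument, not the tail decay of $\widehat w$, is where the $\eps^{-1/(1-s)}$ component of $N_0$ actually comes from; you attribute the correct exponent to the wrong mechanism. You need to supply this (or an equivalent) estimate for the off-band terms to complete the proof.
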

\begin{proof}
  Let us first estimate $W_f(a,b)$ assuming that 
  $f(x)$ contains a single intrinsic mode function of type $(M,N)$
  \[
  f(x)=\alpha(x)e^{2\pi i N \phi(x)}.
  \]
  Using the definition of the wave packet transform, we have the
  following expression for $W_f(a,b)$.
  \begin{eqnarray*}
    W_f(a,b) &=& \int \alpha(x)e^{2\pi iN \phi(x)}|a|^{ns/2}w(|a|^s(x-b))e^{-2\pi i(x-b)\cdot a}dx\\
    &=&\int \alpha(b+|a|^{-s}y)e^{2\pi iN \phi(b+|a|^{-s}y)}|a|^{ns/2}w(y)e^{-2\pi i |a|^{-s} y\cdot a}d(|a|^{-s}y)\\
    &=&|a|^{-ns/2}\int \alpha(b+|a|^{-s}y)w(y) e^{2\pi i (N \phi(b+|a|^{-s}y)-|a|^{-s}y\cdot a)} dy.
  \end{eqnarray*}
  We claim that when $N$ is sufficiently large
  \begin{equation}
    W_f(a,b)=
    \begin{cases}
      |a|^{-ns/2} O(\eps),
      & |a|\notin[\frac{N}{2M},2MN]\\
      |a|^{-ns/2} \left(\alpha(b)e^{2\pi iN \phi(b)}\widehat{w}\left(|a|^{-s}(a-N\grad \phi(b))\right)+O(\eps) \right), 
      & |a|\in[\frac{N}{2M},2MN].
    \end{cases}
    \label{E1single}
  \end{equation}
  First, let us consider the case
  $|a|\notin[\frac{N}{2M},2MN]$. Consider the integral
  \[
  \int h(y) e^{i g(y)} dy 
  \]
  for smooth real functions $h(y)$ and $g(y)$, along with the
  differential operator
  \[
  L =\frac{1}{i}\frac{\langle\grad g,\grad\rangle}{ |\grad g|^2}.
  \]
  If $|\grad g|$ does not vanish, we have
  \[
  L e^{ig} = \frac{\langle \grad g, i\grad g e^{ig} \rangle}{i |\grad g|^2} = e^{i g}.
  \]
  Assuming that $h(y)$ decays sufficiently fast at infinity, we
  perform integration by parts $r$ times to get
  \[
  \int h e^{ig} dy = \int h (L^r e^{i g}) dy = \int ((L^*)^r h) e^{i g} dy,
  \]
  where $L^*$ is the adjoint of $L$. In the current setting,
  $W_f(a,b) = |a|^{-ns/2} \int h(y) e^{i g(y)} dy$ with
  \[
  h(y)=\alpha (b+|a|^{-s}y)w(y),\quad 
  g(y)=2\pi ( N \phi(b+|a|^{-s}y)-|a|^{-s} y\cdot a ),
  \]
  where $h(y)$ clearly decays rapidly at infinity since $w(y)$ is in
  the Schwartz class. In order to understand the impact of $L$ and
  $L^*$, we need to bound the norm of
  \[
  \grad g(y)=2\pi \left(N\grad \phi(b+|a|^{-s}y)-a\right) |a|^{-s} 
  \]
  from below when $|a|\notin[\frac{N}{2M},2MN]$. If $|a|<\frac{N}{2M}$,
  then
  \[
  |\grad g|\gtrsim(|N\grad\phi|-|a|)|a|^{-s}\gtrsim |N\grad\phi||a|^{-s}/2 \gtrsim N^{1-s}.
  \]
  If $|a|>2MN$, then
  \[
  |\grad g|\gtrsim(|a|-|N\grad\phi|)|a|^{-s}\gtrsim |a|\cdot|a|^{-s}/2 \gtrsim(|a|)^{1-s} \gtrsim
  N^{1-s}.
  \]
  Hence $|\grad g|\gtrsim N^{1-s}$ if $|a|\notin[\frac{N}{2M},2MN]$.
  Since $|\grad g|\not=0$ and each $L^*$ contributes a factor of order
  $1/|\grad g|$
  \[
  \left| \int e^{ig(y)} ((L^*)^r h)(y) dy \right| \lesssim N^{-(1-s)r}.
  \]
  When 
  \begin{equation}
  \label{2.2eqn:NR3}
  N \gtrsim \eps^{-1/((1-s)r)},
  \end{equation}
   we obtain
  \[
  \left| \int e^{ig(y)} ((L^*)^r h)(y) dy \right| \lesssim \eps.
  \]  
  Using the fact $W_f(a,b) = |a|^{-ns/2} \int h(y) e^{i g(y)} dy$, we
  have $|W_f(a,b)| \lesssim |a|^{-ns/2} \eps$.
  
  Second, let us address the case $|a|\in[\frac{N}{2M},2MN]$. We
  want to approximate $W_f(a,b)$ with
  \[
  |a|^{-ns/2} \alpha(b)e^{2\pi iN \phi(x)}\widehat{w}\left(|a|^{-s}(a-N\grad \phi(b))\right).
  \]
  Since $w(y)$ is in the Schwartz class, we can assume that
  $|w(y)|\leq\frac{C_\mm}{|y|^\mm}$ for some sufficient large $\mm$ with
  $C_\mm$ for $|y|\ge 1$. Therefore, the integration over
  $|y|\gtrsim\eps^{-1/\mm}$ yields a contribution of at most order
  $O(\eps)$. We can then estimate
  \[
  |W_f(a,b)|=|a|^{-ns/2}
  \left(\int_{|y|\lesssim\eps^{-1/\mm}} \alpha(b+|a|^{-s}y)w(y)e^{2\pi i (N \phi(b+|a|^{-s}y)-|a|^{-s} y\cdot a )}dy + O(\eps)\right).
  \]
  A Taylor expansion of $\alpha(x)$ and $\phi(x)$ shows that
  \[
  \alpha(b+|a|^{-s}y)=\alpha(b)+\grad \alpha(b^*) \cdot |a|^{-s}y
  \]
  and
  \[
  \phi(b+|a|^{-s}y)= \phi(b) + \grad \phi(b)\cdot (|a|^{-s}y) + \frac{1}{2} (|a|^{-s}y)^t \grad^2 \phi(b^*) (|a|^{-s}y),
  \]
  where in each case $b^*$ is a point between $b$ and
  $b+|a|^{-s}y$. We want to drop the last term from the above formulas
  without introducing a relative error larger than $O(\eps)$. We begin
  with the estimate
  \[
  \int_{|y|\lesssim\eps^{-1/\mm}}|\grad \alpha\cdot |a|^{-s}yw(y)|dy\lesssim\eps,
  \]
  which holds if $\eps^{-n/\mm}|\grad \alpha\cdot |a|^{-s}y|
  \lesssim\eps$, which is true when
  $|a|^{-s}\lesssim\eps^{1+(n+1)/\mm}$. Since $|a|\in[\frac{N}{2M},2MN]$,
  the above holds if
  \begin{equation}
    N\gtrsim \eps^{-(1+(n+1)/\mm)/s}. \label{2.2eqn:NR1}
  \end{equation}
  We also need
  \[
  \int_{|y|\lesssim\eps^{-1/\mm}} 
  | \alpha(b)w(y) e^{2\pi i (N\phi(b)+N\grad \phi(b)\cdot |a|^{-s}y-|a|^{-s}y\cdot a )}|
  \cdot | e^{2\pi i N/2 (|a|^{-s}y)^t \grad^2\phi (|a|^{-s}y)}-1
  |dy\lesssim\eps.
  \]
  Since $|e^{ix}-1|\leq|x|$, the above inequality is equivalent to
  \[
  \int_{|y|\lesssim\eps^{-1/\mm}} \alpha(b)w(y)e^{2\pi i (N \phi(b)+N\grad \phi(b)\cdot |a|^{-s}y-|a|^{-s}y\cdot a )} 
  | 2\pi N/2(|a|^{-s}y)^t \grad^2\phi (|a|^{-s}y) |dy\lesssim\eps,
  \]
  which is true if $\eps^{-n/\mm} N(|a|^{-s}y)^t \grad^2\phi
  (|a|^{-s}y)\lesssim \eps$, which in turn holds if $N |a|^{-2s} |y|^2
  \lesssim \eps^{1+n/\mm}$.  Because $|y|\lesssim\eps^{-\frac{1}{\mm}}$
  and $|a|\in[\frac{N}{2M},2MN]$, the above inequality is valid when
  \begin{equation}
    N\gtrsim \eps^{-(1+(n+2)/\mm)/(2s-1)}.  \label{2.2eqn:NR2}
  \end{equation}
  In summary, for $N$ larger than the maximum of the right hand sides
  of \eqref{2.2eqn:NR3}, \eqref{2.2eqn:NR1} and \eqref{2.2eqn:NR2}, if $|a|\in[\frac{N}{2M},2MN]$
  then we have
  \begin{align*}
    W_f(a,b)&=
    |a|^{-ns/2}\left(\int_{|y|\lesssim\eps^{-1/\mm}}\alpha(b)w(y)e^{2\pi i (N \phi(b)+N\grad \phi(b)\cdot |a|^{-s}y-|a|^{-s}y\cdot a)}dy + O(\eps)\right)\\
    &=|a|^{-ns/2}\left(\int_{|y|\lesssim\eps^{-1/\mm}}\left(\alpha(b)e^{2\pi iN \phi(b)}\right)
    w(y) e^{2\pi i (N\grad \phi(b) - a)\cdot |a|^{-s} y} dy + O(\eps)\right)\\
    &=|a|^{-ns/2}\left(\int_{\mathbb{R}^{n}}             \left(\alpha(b)e^{2\pi iN \phi(b)}\right) 
    w(y) e^{2\pi i (N\grad \phi(b) - a)\cdot |a|^{-s} y} dy + O(\eps)\right)\\
    &=|a|^{-ns/2}\left(\alpha(b)e^{2\pi iN \phi(b)}\widehat{w}\left(|a|^{-s}(a-N\grad \phi(b))\right)+O(\eps)\right),
  \end{align*}
  where the third line uses the fact that the integration of $w(y)$
  outside the set $\{y: |y|\lesssim\eps^{-1/\mm} \}$ is again of order
  $O(\eps)$.  

  Now let us return to the general case, where $f(x)$ is a
  superposition of $K$ well-separated intrinsic mode components:
  \[
  f(x)=\sum_{k=1}^K f_k(x)=\sum_{k=1}^K \alpha_k(x)e^{2\pi iN_k
    \phi_k(x)}.
  \]
  By linearity of the wave packet
  transform and \eqref{E1single}, we find:
   \[
   W_f(a,b)= |a|^{-ns/2}\left(\sum_{k\in \Omega_a=\{k:|a|\in[\frac{N_k}{2M},2MN_k]\}} \alpha_k(b)e^{2\pi iN \phi_k(b)}
      \widehat{w}\left(|a|^{-s}(a-N\grad \phi_k(b))\right)+O(\eps)\right).
      \]
\end{proof}
The next lemma estimates $\grad_b W_f(a,b)$ when
$\Omega_a$ is not empty, i.e., the case where $W_f(a,b)$ is
non-negligible.

\begin{lemma}
  \label{mSSWPT:lem:B2}
  Suppose $\Omega_a=\{k:|a|\in[\frac{N_k}{2M},2MN_k]\}$ is not empty. Under the assumption of Theorem \ref{mSSWPT:thm:2d1}, we have
  \begin{equation*}
    \grad_b W_f(a,b)=
    2\pi i |a|^{-ns/2}
    \left(\sum_{k\in\Omega_a} N_k\grad\phi_k(b)\alpha_k(b)e^{2\pi i N_k \phi_k(b)}\widehat{w}\left(|a|^{-s}\left(a-N_k\grad\phi_k(b)\right)\right) +|a|O\left(\eps\right)\right),
  \end{equation*}
  when $N>N_0\left(M,m,K,s,\eps\right)\simeq \max\left\{ \epsilon^{\frac{-2}{2s-1}},\epsilon^{\frac{-1}{1-s}} \right\}$.
\end{lemma}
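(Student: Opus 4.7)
The plan is to compute $\grad_b W_{f_k}(a,b)$ by differentiating under the integral and then reduce the result to a quantity already controlled by Lemma~\ref{mSSWPT:lem:A2}. By linearity, $\grad_b W_f(a,b)=\sum_k\grad_b W_{f_k}(a,b)$, so it suffices to handle a single IMT $f_k$. Starting from the change-of-variables expression
\[
W_{f_k}(a,b)=|a|^{-ns/2}\int \alpha_k(b+|a|^{-s}y)\,w(y)\,e^{2\pi iG_k(y)}\,dy,\quad G_k(y)=N_k\phi_k(b+|a|^{-s}y)-|a|^{-s}y\cdot a,
\]
the $b$-gradient produces two pieces: an amplitude piece whose integrand carries the factor $\grad\alpha_k(b+|a|^{-s}y)$ with magnitude at most $M$, and a phase piece whose integrand carries the factor $2\pi iN_k\,\grad\phi_k(b+|a|^{-s}y)\,\alpha_k(b+|a|^{-s}y)$.

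To extract the main term I write $\grad\phi_k(b+|a|^{-s}y)=\grad\phi_k(b)+\rho(y)$ where $|\rho(y)|\leq M|a|^{-s}|y|$ by Taylor's theorem together with $|\grad^2\phi_k|\leq M$. Pulling the constant vector $\grad\phi_k(b)$ outside the integral turns the corresponding contribution into $2\pi iN_k\grad\phi_k(b)\cdot W_{f_k}(a,b)$, and Lemma~\ref{mSSWPT:lem:A2} applied here reproduces the claimed main term $2\pi i|a|^{-ns/2}N_k\grad\phi_k(b)\alpha_k(b)e^{2\pi iN_k\phi_k(b)}\widehat{w}(|a|^{-s}(a-N_k\grad\phi_k(b)))$. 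The Taylor remainder $\rho(y)$, integrated against the rapidly decaying $w(y)$, adds at most $|a|^{-ns/2}\cdot N_k|a|^{-s}$ to the error; the amplitude piece adds at most $|a|^{-ns/2}\cdot O(1)$.

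The bookkeeping closes cleanly for $k\in\Omega_a$ because $|a|\asymp N_k$: the Lemma~\ref{mSSWPT:lem:A2} error $N_kO(\eps)$ becomes $|a|O(\eps)$, the Taylor remainder $N_k|a|^{-s}\lesssim|a|^{1-s}$ is bounded by $|a|\eps$ whenever $N_k\gtrsim\eps^{-1/s}$ (implied by $N\geq N_0$ since $1/s\leq 1/(1-s)$ for $s\in(1/2,1)$), and the amplitude piece is absorbed via $|a|\eps\gtrsim 1$. The main obstacle is the case $k\notin\Omega_a$: one cannot afford to multiply the Lemma~\ref{mSSWPT:lem:A2} bound by the loose factor $N_k$, which may exceed $|a|$. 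For these indices I will redo the integration-by-parts argument of Lemma~\ref{mSSWPT:lem:A2} directly on the integral for $\grad_b W_{f_k}$, exploiting $|\grad G_k|\gtrsim N_k^{1-s}$ when $|a|<N_k/(2M)$ and $|\grad G_k|\gtrsim|a|^{1-s}$ when $|a|>2MN_k$; with the number of adjoint-$L$ applications chosen large enough (depending only on $s$), the amplified integrand of size $O(N_k)$ is whittled down to a contribution bounded by $|a|^{-ns/2}|a|O(\eps)$, which is precisely what the quantitative threshold $N_0\simeq\max(\eps^{-2/(2s-1)},\eps^{-1/(1-s)})$ is tuned to afford.
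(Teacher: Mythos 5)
Your proposal is correct, but it organizes the derivative differently from the paper, and the comparison is instructive. The paper differentiates $\overline{w_{ab}(x)}$ in the original variables, so $\grad_b$ produces an envelope term carrying $-|a|^s\grad w$ and a modulation term carrying $+2\pi i a\,w$; after the Taylor-expansion step each main contribution is large on its own, and the claimed main term only emerges from the cancellation $-2\pi i\left(a-N_k\grad\phi_k(b)\right)+2\pi i a=2\pi i N_k\grad\phi_k(b)$. The benefit of that split is that the prefactors multiplying the $O(\eps)$ errors are $|a|^s$ and $|a|$ --- powers of $|a|$, never of $N_k$ --- so the off-band indices $k\notin\Omega_a$ cost nothing extra (though the paper's write-up does not even comment on them, simply ``taking sum over $K$ terms''). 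You instead differentiate after the change of variables, so $\grad_b$ hits $\alpha_k(b+|a|^{-s}y)$ and the phase $N_k\phi_k(b+|a|^{-s}y)$; freezing $\grad\phi_k$ at $b$ gives the main term as $2\pi i N_k\grad\phi_k(b)\,W_{f_k}(a,b)$ with no cancellation, which reduces cleanly to Lemma \ref{mSSWPT:lem:A2} as a black box, and your error bookkeeping for $k\in\Omega_a$ (Taylor remainder $N_k|a|^{-s}\lesssim |a|^{1-s}\le |a|\eps$ via $N\gtrsim\eps^{-1/(1-s)}\ge\eps^{-1/s}$, amplitude piece absorbed via $|a|\eps\gtrsim 1$, which holds because $\Omega_a\neq\emptyset$ forces $|a|\gtrsim N\gtrsim\eps^{-1}$) checks out. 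The price is exactly the one you identify: for $k\notin\Omega_a$ your integrand carries the factor $N_k$, which can dwarf $|a|$ when $|a|<N_k/(2M)$, so you must rerun the non-stationary-phase argument with $r\gtrsim 1/(1-s)$ integrations by parts to beat $N_k\cdot N_k^{-(1-s)r}\le|a|\eps$; since enlarging $r$ only weakens the constraint $N\gtrsim\eps^{-1/((1-s)r)}$, this does not affect the threshold $N_0$, and the argument closes. On this point your treatment is actually more explicit than the paper's.
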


\begin{proof}
  The proof is similar to that of Lemma \ref{mSSWPT:lem:A2}. Assume that
  $f(x)$ contains a single intrinsic mode function, i.e.,
  \[
  f(x)=\alpha(x)e^{2\pi iN \phi(x)},
  \]
  then
  \begin{align*}
    \grad_b W_f(a,b) =&
    \int_{\mathbb{R}^{n}}\alpha(x)e^{2\pi i N \phi(x)}|a|^{ns/2}\left(\grad w(|a|^s(x-b))(-|a|^s)+2\pi ipw(|a|^s(x-b))\right)
    e^{-2\pi i(x-b)\cdot a} dx\\
    =& \int_{\mathbb{R}^{n}}\alpha(b+|a|^{-s}y)e^{2\pi iN \phi(b+|a|^{-s}y)}|a|^{-ns/2}\grad w(y)(-|a|^s) e^{-2\pi i |a|^{-s} y \cdot a}dy\\
    &+\int_{\mathbb{R}^{n}}\alpha(b+|a|^{-s}y)e^{2\pi iN \phi(b+|a|^{-s}y)}|a|^{-ns/2} 2\pi i a w(y) e^{-2\pi i |a|^{-s} y\cdot a}dy.
  \end{align*}
  Forming a Taylor expansion and following the same argument as in the
  proof of Lemma \ref{mSSWPT:lem:A2} gives the following approximation for
  $|a|\in[\frac{N}{2M},2MN]$
  \begin{align*}
    \grad_b W_f(a,b) =& 
    \left(-2\pi i |a|^{-ns/2} (a-N\grad \phi(b)) \alpha(b)e^{2\pi iN \phi(b)} \widehat{w}(|a|^{-s}(a-N\grad \phi(b)))+O(\eps)\right)\\
    &+2\pi i|a|^{-ns/2}a\left(\alpha(b)e^{2\pi iN \phi(b)}\widehat{w}\left(|a|^{-s}(a-N\grad \phi(b))\right)+O(\eps)\right)\\
    =& 2\pi i |a|^{-ns/2}\left(N\grad\phi(b) \alpha(b)e^{2\pi i N \phi(b)} \widehat{w}(|a|^{-s}(a-N\grad \phi(b)))+|a|O(\eps) \right).
  \end{align*}
  For $f(x)=\sum_{k=1}^K f_k(x) = \sum_{k=1}^K \alpha_k(x)e^{2\pi iN
    \phi_k(x)}$, taking sum over $K$ terms gives
  \[
  \grad_b W_f(a,b)=2\pi i  |a|^{-ns/2} \left(\sum_{k\in\Omega_a}\left(N\grad\phi_k(b) \alpha_k(x)e^{2\pi i N \phi_k(b)} 
  \widehat{w}(|a|^{-s}(a-N\grad \phi_k(b)))\right)+|a|O(\eps)\right)
  \]
  for $|a|\in[\frac{N}{2M},2MN]$.
\end{proof}

We are now ready to prove the theorem.
\begin{proof}

  For $(i)$, the well-separation condition implies that $\{Z_{k}:1\leq k\leq K\}$ are disjoint.

  Let $(a,b)$ be a point in $R_{\eps} = \{(a,b): |W_f(a,b)| \ge
  |a|^{-ns/2} \sqrt{\eps} \}$.  From the above lemma, we have
  \[ 
  W_f(a,b) = |a|^{-ns/2}\left(\sum_{k\in\Omega_a} \alpha_k(b)e^{2\pi iN_k
    \phi_k(b)}\widehat{w}\left(|a|^{-s}(a-N_k\grad
  \phi_k(b))\right)+O(\eps)\right).
  \]
  Therefore, there exists $k$ between $1$ and $K$ such that
  $\widehat{w}\left(|a|^{-s}(a-N_k\grad \phi_k(b))\right)$ is non-zero. From
  the definition of $\widehat{w}(\xi)$, we see that this implies $(a,b) \in
  Z_{k}$. Hence $R_{\eps} \subset \bigcup_{k=1}^K Z_{k}$. It's obvious that $S_\eps\subset R_\eps$.
  
  To show $(ii)$, let us recall that $v_f(a,b)$ is defined as
  \[
  v_f(a,b) =        \frac{ \grad_b W_f(a,b) }{2\pi iW_f(a,b)  }
  \]
  for $W_f(a,b)\neq 0$. If $(a,b) \in R_{\eps} \cap
  Z_{k}$, then
  \[
  W_f(a,b)=|a|^{-ns/2}\left(\alpha_k(b)e^{2\pi iN_k \phi_k(b)}\widehat{w}\left(|a|^{-s}(a-N_k\grad \phi(b))\right)+O(\eps)\right)
  \]
  and
  \[
  \grad_b W_f(a,b) =2\pi i |a|^{-ns/2}
  \left(N_k\grad\phi_k(b)\alpha_k(b)e^{2\pi iN_k \phi_k(b)} \widehat{w}(|a|^{-s}(a-N_k\grad \phi_k(b)))+|a|O(\eps)\right)
  \]
  as the other terms drop out since $\{Z_{k}\}$ are disjoint. Hence
  \[
  v_f(a,b)=\frac{N_k\grad
    \phi_k(b) \left(\alpha_k(b)e^{2\pi iN_k
      \phi_k(b)}\widehat{w}\left(|a|^{-s}(a-N_k\grad
    \phi_k(b))\right)+O(\eps)\right)}
  { \left(\alpha_k(b)e^{2\pi
      iN_k \phi_k(b)}\widehat{w}\left(|a|^{-s}(a-N_k\grad
    \phi_k(b))\right)+O(\eps)\right)}.
  \]
  Let us denote the term $\alpha_k(b)e^{2\pi iN_k
    \phi_k(b)}\widehat{w}\left(|a|^{-s}(a-N_k\grad \phi_k(b))\right)$ by $g$. Then
  \[
  v_f(a,b)=\frac{N_k\grad \phi_k(b)\left(g+O(\eps)\right)}{ g+O(\eps)}.
  \]
  Since $|W_f(a,b)|\geq |a|^{-ns/2}\sqrt{\eps}$ for $(a,b)\in
  R_{\eps}$, $|g|\gtrsim\sqrt{\eps}$, and therefore
  \[
  \frac{|v_f(a,b)-N_k\grad \phi_k(b)|}{ |N_k\grad \phi_k(b)|}
  \lesssim \left|\frac{O(\eps)}{g+O(\eps)}\right|\lesssim \sqrt{\eps}.
  \]
  
   Similarly, if $(a,b)\in S_\eps\cap Z_k$, then 
   \[
  \frac{|v_f(a,b)-N_k\nabla\phi_k(b)|}{ |N_k \nabla\phi_k(b)|}
  \lesssim \left|\frac{O(\eps)}{g+O(\eps)}\right|\lesssim \frac{\sqrt{\eps}}{N_k^{ns/2}},
  \]
  since $|g|\gtrsim N_k^{ns/2}\sqrt{\eps}$ for $(a,b)\in S_\eps\cap Z_k$.
\end{proof}
In the next section, we will show that an atomic resolution crystal image can be considered as a superposition of multiple components $f(x)=\sum_{k=1}^K \alpha_k(x)e^{2\pi i N_k\phi_k(x)}$ and hence Theorem \ref{mSSWPT:thm:2d1} can be applied to analyze crystal images.

\subsection{Mathematical models for $3D$ atomic resolution crystal image}
 In this paper, we assume that the lattice type is known. In practical applications, the SST of crystal images can provide important features for crystal classification following the approach in \cite{Lu2018}. Without loss of generality, we assume the known lattice type is cubic. Consider an image of a polycrystalline material with atomic resolution.
Denote the perfect reference lattice as 
\begin{equation*}
\mathcal L=\{av_1+bv_2 +c v_3\,:\,a,\text{ }b,\text{ }c\text{ are integers}\}\,,
\end{equation*}
where $v_1$, $v_2$, and $v_3\in\RR^3$ represent three fixed lattice vectors. Let $S(2\pi Fx)$ be the image corresponding to a single perfect unit cell, extended periodically in $x$ with respect to the reference crystal lattice, where $F$ is an affine transform determined by the lattice type. $F$ is an identity matrix in the case of the cubic lattice here. We denote by $\Omega$ the domain occupied by the whole
image and by $\Omega_k$, $k = 1, \ldots, M$, the grains the system consists of.
Now we model a polycrystal image $f:\Omega\to\RR$ as
\begin{equation}
f(x)= \alpha_k(x) S(2\pi N \phi_k(x))+c_k(x) \qquad \text{if } x \in \Omega_k,
\label{eqcrystal}
\end{equation}
where $N$ is the reciprocal lattice parameter (or rather the relative
reciprocal lattice parameter as we will normalize the dimension of the
image).  The $\phi_k:\Omega_k \to \RR^3$ is chosen to map the atoms of
grain $\Omega_k$ back to the configuration of a perfect
crystal, in other words, it can be thought of as the inverse of the
elastic displacement field. The local inverse deformation gradient is
then given by $G_k = \nabla \phi_k$ in each $\Omega_k$.  
Possible variation of intensity and illumination may occur during the imaging process, leading to the smooth amplitude envelop $\alpha_k(x)$ and the smooth trend function $c_k(x)$ in \eqref{eqcrystal}.  By the
$3D$ Fourier series $\hat S$ of $S$ and the indicator functions
$\chi_{\Omega_k}$, we can rewrite \eqref{eqcrystal} as
\begin{equation}
\begin{aligned}
  f(x) =\sum_{k=1}^M \chi_{\Omega_k} (x)\left( \sum_{n \in
      \mc{L}^{\ast}}\alpha_k(x) \widehat{S}(n)e^{2\pi iN n \cdot
      \phi_k(x)}+c_k(x)\right),
\label{eqn:imagefunction}
\end{aligned}
\end{equation}
where $\mc{L}^{\ast}$ is the reciprocal lattice of $\mc{L}$ (recall that $S$ is
periodic with respect to the lattice $\mc{L}$). In each grain
$\Omega_k$, the image is a superposition of wave-like components
$\alpha_k(x) \widehat{S}(n) e^{2\pi i N n \cdot \phi_k(x)}$ with
local wave vectors $N \nabla(n \cdot \phi_k(x))$ and local amplitude
$\alpha_k(x)|\widehat{S}(n)|$.  

Our goal here is to apply the
$3D$ SST to estimate the
defect region and also $\nabla \phi_k$ in the interior of each grain $\Omega_k$. Grain boundaries are interpreted as $\cup \partial\Omega_k$ (in real crystal images, the grain boundaries would be a thin transition region instead of a sharp boundary $\cup \partial \Omega_k$. In the presence of local defects, e.g., an isolated defect and a terminating line of defects, $\cup\partial\Omega_k$ may include irregular boundaries and may contain point boundaries inside $\cup\Omega_k$.

\subsection{SST for crystal image analysis}

In this section, we will show that the $3D$ SST introduced previously can estimate well-separated local wavevectors from a superposition of multiple components in \eqref{eqn:imagefunction}.

\begin{definition}[$3D$ general shape function]
\label{def:GSF}

The $3D$ general shape function class ${S}_M$ consists of periodic functions $S(x)$ with a periodicity $(2\pi,2\pi,2\pi)$, a unit $L^{2}([-\pi,\pi]^{3})$-norm, and an $L^\infty$-norm bounded by $M$ satisfying the following conditions:
\begin{enumerate}
\item The $3D$  Fourier series of $S(x)$ is uniformly convergent;
\item $\sum_{n \in \ZZ^{3}} |\widehat{S}(n)|\leq M$ and  $\widehat{S}(0)=0$;
\item Let $\Lambda_i:=\{|n_i|\in \NN: n_i\neq 0,\text{ }\widehat{S}(n_1,n_2,n_3)\neq 0 \text{ for } n_1,\text{ }n_2,\text{ }n_3\in\ZZ\}$ for $i=1$, $2$, and $3$, and $\lambda_i$ be the greatest common divisor of all the elements in $\Lambda_i$. Then the greatest common divisor of $\{\lambda_i\}_{1\leq i\leq 3}$ is $1$.
\end{enumerate}
\end{definition}
\begin{definition}[$3D$ general intrinsic mode type function (GIMT)]
  \label{def:GIMTF}
  A function $f(x)=\alpha(x)S(2\pi N  \phi(x))$ is a $3D$  GIMT of type $(M,N)$, if $S(x)\in {S}_M$, $\alpha(x)$ and $\phi(x)$ satisfy the conditions below.
\begin{align*} 
&\alpha(x)\in C^\infty, \quad |\grad\alpha|\leq M, \quad 1/M \leq \alpha\leq M, \\
 &\phi(x)\in C^\infty, \quad 1/M \leq \left|  \nabla (n^{\TT}  \phi) / \abs{n^{\TT} } \right| \leq M,\quad \text{and}\\
  &\left| \nabla (n^{\TT}  \phi) / \abs{n^{\TT}} \right|\leq M, \quad \forall n \in \ZZ^{3} \quad \text{s.t.}\quad \widehat{S}(n)\neq 0.
   \end{align*}
\end{definition}

The following theorem illustrates the main results of the $3D$ SST for a superposition of  GIMTs. 
\begin{theorem}
  \label{mSSWPT:thm:2d2}
  Suppose the $3D$ mother wave packet is of type $(\epsilon,m)$, for any fixed $\eps\in(0,1)$ and any fixed integer $m\geq 0$.
  For a function $f(x)$, we define
  \[
  R_{\eps} = \{(a,b): |W_f(a,b)|\geq |a|^{-3s/2}\sqrt \eps\},
  \]
    \[
  S_{\eps} = \{(a,b): |W_f(a,b)|\geq \sqrt \eps\},
  \]
  and 
  \[
  Z_{n} = \{(a,b): |a-N\grad (n \phi(b))|\leq |a|^s \}.
  \]
  For fixed $M$, $m$, $s$, and $\epsilon$,  there
  exists a constant $N_0\left(M,m,s,\eps\right)\simeq \max\left\{ \epsilon^{\frac{-2}{2s-1}},\epsilon^{\frac{-1}{1-s}} \right\}$ such that for any
  $N>N_0$ and a $3D$  GIMT $f(x)=\alpha(x)S(2\pi N  \phi(x))$ of type (M,N) the following statements
  hold for $n=O(1)$.
  \begin{enumerate}
  \item $\left\{Z_{n}:\widehat{S}(n)\neq 0 \right\}$ are disjoint and $S_{\eps}\subset R_{\eps}
    \subset \bigcup_{\widehat{S}(n)\neq 0} Z_{n}$;
  \item For any $(a,b) \in R_{\eps} \cap Z_{n}$, 
    \[
    \frac{|v_f(a,b)-N\grad (n\phi(b))|}{ |N \grad (n\phi(b))|}\lesssim\sqrt \eps;
    \]
     \item For any $(a,b) \in S_{\eps} \cap Z_{n}$, 
    \[
    \frac{|v_f(a,b)-N\grad (n\phi(b))|}{ |N \grad (n\phi(b))|}\lesssim N^{-3s/2}\sqrt \eps.
    \]
  \end{enumerate}
\end{theorem}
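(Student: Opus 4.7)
The plan is to reduce Theorem \ref{mSSWPT:thm:2d2} to the already-proved Theorem \ref{mSSWPT:thm:2d1} by exploiting the $3D$ Fourier series of $S$. Writing
\[
f(x)=\alpha(x)\,S\bigl(2\pi N\phi(x)\bigr)=\sum_{n\in\ZZ^{3}}\widehat{S}(n)\,\alpha(x)\,e^{2\pi i N\,n\cdot\phi(x)},
\]
each nonzero summand $f_n(x)=\widehat{S}(n)\alpha(x)e^{2\pi iN_n\phi_n(x)}$ with $N_n=N|n|$ and $\phi_n(x)=(n/|n|)\cdot\phi(x)$ is an IMT of type $(CM,N_n)$: the smoothness and magnitude assumptions on $\alpha$ and $\phi$ in Definition \ref{def:GIMTF} transfer to each $f_n$, the required lower bound $|\grad\phi_n|\geq 1/M$ is precisely the GIMT condition $|\grad(n^{\TT}\phi)/|n^{\TT}||\geq 1/M$, and the amplitude bound follows from $|\widehat{S}(n)|\le M$ combined with the bounds on $\alpha$. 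Hence $f$ is, morally, a superposition of the form covered by Theorem~\ref{mSSWPT:thm:2d1}, with each ``wavevector band'' centered at $N\grad(n\cdot\phi(b))$.

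First I would verify the separation condition, namely that $\{Z_n:\widehat{S}(n)\neq 0,\,|n|=O(1)\}$ are disjoint for $N$ large. If $(a,b)\in Z_{n_1}\cap Z_{n_2}$, then $|N\grad((n_1-n_2)\cdot\phi(b))|\leq 2|a|^{s}$. On $Z_{n_i}$ we have $|a|\simeq N|n_i|\simeq N$, so the lower bound $|\grad((n_1-n_2)\cdot\phi)|\gtrsim |n_1-n_2|/M$ from Definition \ref{def:GIMTF} forces $N\lesssim N^{s}$, which fails once $N>N_0$. This is exactly the analogue of the separation hypothesis in Definition \ref{mSSWPT:def:SWSIMC}, now derived instead of assumed.

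Next I would control the contribution of the high-frequency tail $|n|\gg 1$. Because $\sum_{n}|\widehat{S}(n)|\leq M$, the tail contributes at most $\sum_{|n|>R}|\widehat{S}(n)|\cdot\|w_{ab}\|_{1}=O(\epsilon)$ after choosing $R$ large enough depending on $M$ and $\epsilon$, so it can be absorbed into the $O(\epsilon)$ error already present in Lemmas~\ref{mSSWPT:lem:A2} and~\ref{mSSWPT:lem:B2}. Applied to the finite truncation, Lemma \ref{mSSWPT:lem:A2} yields
\[
W_f(a,b)=|a|^{-3s/2}\Bigl(\sum_{n\in\Omega_a}\widehat{S}(n)\alpha(b)e^{2\pi iN n\cdot\phi(b)}\widehat{w}\bigl(|a|^{-s}(a-N\grad(n\cdot\phi(b)))\bigr)+O(\epsilon)\Bigr),
\]
and Lemma \ref{mSSWPT:lem:B2} gives the analogous formula for $\grad_{b}W_{f}$ with the extra factor $2\pi iN\grad(n\cdot\phi(b))$ inside each surviving term. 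Combined with disjointness of the $Z_n$'s, this proves statement (i): any $(a,b)\in R_{\eps}$ forces some term to be non-negligible, hence must lie in the unique $Z_n$ for which $\widehat{w}(|a|^{-s}(a-N\grad(n\cdot\phi(b))))\ne 0$.

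Statements (ii) and (iii) then follow by the same quotient argument as in the proof of Theorem \ref{mSSWPT:thm:2d1}: on $R_{\eps}\cap Z_{n}$ only the $n$-th term survives in both expansions, so writing $g=\widehat{S}(n)\alpha(b)e^{2\pi iNn\cdot\phi(b)}\widehat{w}(|a|^{-s}(a-N\grad(n\cdot\phi(b))))$ one obtains
\[
v_f(a,b)=\frac{N\grad(n\cdot\phi(b))(g+O(\eps))}{g+O(\eps)},
\]
and $|g|\gtrsim\sqrt{\eps}$ (resp.\ $|g|\gtrsim N^{3s/2}\sqrt{\eps}$) on $R_{\eps}$ (resp.\ $S_{\eps}$) yields the stated bounds. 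The main technical obstacle I anticipate is the tail control: because the sum over $n$ is infinite, one must justify interchanging summation and wave packet transform and show that the tail contribution is genuinely $O(\epsilon)$ uniformly in $(a,b)$. This is where the restriction to $|n|=O(1)$ and the hypothesis $\sum|\widehat{S}(n)|\leq M$ play their essential role; the rest of the argument is a direct adaptation of the proofs of Lemmas \ref{mSSWPT:lem:A2}--\ref{mSSWPT:lem:B2} and Theorem \ref{mSSWPT:thm:2d1}.
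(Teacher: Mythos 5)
Your proposal is correct and follows essentially the same route as the paper's own proof: decompose $f$ via the Fourier series of $S$ into modes $\widehat{S}(n)\alpha(x)e^{2\pi iN|n|\tilde\phi_n(x)}$, derive disjointness of the $Z_n$ from the GIMT lower bound $|\grad((n_1-n_2)\cdot\phi)|\geq|n_1-n_2|/M$ versus the $O(N^s)$ support width, apply Lemmas \ref{mSSWPT:lem:A2} and \ref{mSSWPT:lem:B2} to each mode (the paper absorbs the infinite sum by noting each mode's error is $|\widehat{S}(n)|O(\eps)$ and $\sum_n|\widehat{S}(n)|\leq M$, rather than truncating), and finish with the same quotient argument for (ii) and (iii).
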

The proof of Theorem \ref{mSSWPT:thm:2d2} is similar to that of Theorem \ref{mSSWPT:thm:2d1} and requires Lemmas \ref{mSSWPT:lem:A2} and \ref{mSSWPT:lem:B2}. 
%

\begin{proof}
By the uniform convergence of the $3D$ Fourier series of general shape functions, we have
\begin{equation*}
W_f(a,b)= \sum_{n\in \ZZ^3}W_{f_{n}}(a,b),
  \end{equation*}
where $f_{n}(x)=\widehat{S}(n)\alpha(x)e^{2\pi iN n\cdot\phi(x)}$. 
Introduce the short hand notation,  $\wt{\phi}_{n}(x)=n\cdot \phi(x) / \abs{n}$, then 
\begin{equation*}
  f_{n}(x)=\widehat{S}(n)\alpha(x)e^{2\pi iN \abs{n} \wt{\phi}_{n}(x)}.
\end{equation*}
By the property of $3D$ general intrinsic mode functions, $f_{n}(x)$ is a well-separated superposition of type $(M,N\abs{n},1,s)$ defined in Definition \ref{mSSWPT:def:SWSIMC}.

For each $n$, we estimate $W_{f_{n}}(a,b)$. By Lemma \ref{mSSWPT:lem:A2}, there exists a uniform $N_1(M,m,1,s,\eps)$ independent of $n$ such that, if $N\abs{n}>N_1$, 
\begin{equation*}
W_{f_{n}}(a,b)=|a|^{-\frac{3s}{2}} \left( f_{n}(b)\widehat{w}\left(|a|^{-s} \left(a-N \abs{n} \grad\wt{\phi}_{n}(b)\right)\right)+\left| \widehat{S}(n)\right|O(\eps) \right)
\end{equation*}
for $|a|\in[\frac{N|n|}{2M},2MN|n|]$, and
\begin{equation*}
W_{f_{n}}(a,b)=|a|^{-\frac{3s}{2}}\left| \widehat{S}(n)\right|O(\eps),
\end{equation*}
for $|a|\notin[\frac{N|n|}{2M},2MN|n|]$. 
%

For $(1)$, notice that for any $n\neq \widehat{n}$, the distant between wave vectors $N|n|\grad \tilde{\phi}_n(b)$ and $N|\widehat{n}| \grad \tilde{\phi}_{\widehat{n}}(b)$ are bounded below. In fact
  \[
    |N|n|\grad \tilde{\phi}_n(b)-N|\widehat{n}| \grad \tilde{\phi}_{\widehat{n}}(b)|= |N(n-\widehat{n})\grad \phi(b)| \\
    \geqslant  \frac{N}{M}|n-\widehat{n}|\geqslant \frac{N}{M}.
  \]
The first inequality above is due to the definition of $3D$  mode type functions. Observe that the support of a wave packet centered at $a$ is within a disk with a radius of length $|a|^s$. Because of the range of $a$ of interest is $|a|\leqslant2MN|n|$, where $|n|=O(1)$, the wave packets of interest have supports of size at most of $(2MN|n|)^s$. Hence, if $\frac{N}{M}\geqslant (2MN|n|)^s$, which is equivalent to $N\geqslant(2^{s}M^{1+s}|n|^s)^{\frac{1}{1-s}}=O(1)$, then for each $(a,b)$ of interest, there is at most one $n\in\ZZ^{3}$ such that
\[
   |a-N|n|\grad \tilde{\phi}_n(b))|\leq |a|^s 
\] 
This implies that $ \left\lbrace \ZZ_n\right\rbrace $ are disjoint sets. Notice that $\widehat{w}(x)$ decays to $O(\epsilon)$ when $|x|\geqslant 1$. The above statement also indicates that there is at most one $n\in \left\lbrace n: \widehat{S}(n) \neq 0\right\rbrace$ such that
\[
f_n(b)\widehat{w}(|a|^{-s}(a-N|n|\grad \tilde{\phi}_n(b))) \neq 0.
\]
Hence, if $(a,b)\in \RR_{\varepsilon}$ there must be some $n$ such that $\widehat{S}(n) \neq 0$ and
\begin{equation}
 W_f(a,b)= |a|^{-3s/2}\left(f_n(b)
      \widehat{w}\left(|a|^{-s}(a-N|n|\grad \tilde{\phi}_n(b))\right)+O(\eps)\right),
\end{equation}
by Lemma \ref{mSSWPT:lem:A2}. By the definition of $\ZZ_n$, we see $(a,b)\in \ZZ_n$. So, $S_{\varepsilon}\subset R_{\varepsilon}\subset\bigcup_{\widehat{S}(n)\neq 0}Z_n$, and $(1)$  is proved.

  To show $(2)$, let us recall that $v_f(a,b)$ is defined as
  \[
  v_f(a,b) =        \frac{ \grad_b W_f(a,b) }{2\pi iW_f(a,b)  }
  \]
  for $W_f(a,b)\neq 0$. If $(a,b) \in R_{\eps} \cap
  Z_{n}$, then by Lemmas \ref{mSSWPT:lem:A2}, \ref{mSSWPT:lem:B2}, and the above discussion,
  \[
  W_f(a,b)=|a|^{-3s/2}\left(f_n(b)\widehat{w}\left(|a|^{-s}(a-N|n|\grad \tilde{\phi}_n(b))\right)+\widehat{S}(n)O(\eps)\right)
  \]
  and
  \[
  \grad_b W_f(a,b) =2\pi i |a|^{-3s/2}
  \left(N|n|\grad \tilde{\phi}_n(b) f_n(b) \widehat{w}(|a|^{-s}(a-N|n|\grad \tilde{\phi}_n(b)))+|a|\widehat{S}(n)O(\eps)\right)
  \]
  as the other terms drop out since $\{Z_{n}\}$ are disjoint. Hence
  \[
  v_f(a,b)=\frac{N|n|\grad \tilde{\phi}_n(b) \left(f_n(b)\widehat{w}\left(|a|^{-s}(a-N|n|\grad \tilde{\phi}_n(b))\right)+\widehat{S}(n)O(\eps)\right)}
  { \left(f_n(b)\widehat{w}\left(|a|^{-s}(a-N|n|\grad \tilde{\phi}_n(b))\right)+\widehat{S}(n)O(\eps)\right)}.
  \]
  Let us denote the term $f_n(b)\widehat{w}\left(|a|^{-s}(a-N|n|\grad \tilde{\phi}_n(b))\right)$ by $g$. Then
  \[
  v_f(a,b)=\frac{N|n|\grad \tilde{\phi}_n(b)\left(g+O(\eps)\right)}{ g+O(\eps)}.
  \]
  Since $|W_f(a,b)|\geq |a|^{-3s/2}\sqrt{\eps}$ for $(a,b)\in
  R_{\eps}$, $|g|\gtrsim\sqrt{\eps}$, and therefore
  \[
  \frac{|v_f(a,b)-N|n|\grad \tilde{\phi}_n(b)|}{ |N|n|\grad \tilde{\phi}_n(b)|}
  \lesssim \left|\frac{O(\eps)}{g+O(\eps)}\right|\lesssim \sqrt{\eps}.
  \]
 Similarly, if $(a,b)\in S_\eps\cap Z_k$, then 
   \[
  \frac{|v_f(a,b)-N|n|\grad \tilde{\phi}_n(b)|}{ |N|n|\grad \tilde{\phi}_n(b)|}
  \lesssim \left|\frac{O(\eps)}{g+O(\eps)}\right|\lesssim \frac{\sqrt{\eps}}{N^{3s/2}},
  \]
  since $|g|\gtrsim N^{3s/2}\sqrt{\eps}$ for $(a,b)\in S_\eps\cap Z_k$.
\end{proof}

Theorem \ref{mSSWPT:thm:2d2} indicates that the underlying wave-like components $\widehat{S}(n)\alpha(x)e^{2\pi i Nn\cdot\phi(x)}$ of the $3D$  general intrinsic mode type function $f(x)=\alpha(x)S(2\pi N\phi(x))$ are well-separated, if $N$ is sufficiently large. By the definition of the SST, $T_f(v,b)$ would concentrate around their local wave vectors $N\nabla(n\cdot \phi(x))$. 

\section{Crystal Analysis Algorithm and Implementations}
\label{sec:imp}
In this section, we analyze $3D$ crystal images using the $3D$ SST in the previous section. We introduce several fast algorithms to analyze local defects, crystal rotations, and deformations. 
We will still focus on $3D$ cubic crystal images for simplicity. The generalization to other types of crystal images is simple and there will be numerical examples of other types of crystal images in the numerical section. 
%
To make our presentation more transparent, the algorithm and implementation are introduced with a toy example in Figure \ref{fig:bump} (left).

\begin{figure}[ht!]
  \begin{center}
    \begin{tabular}{ccc}
      \includegraphics[width=1.8in]{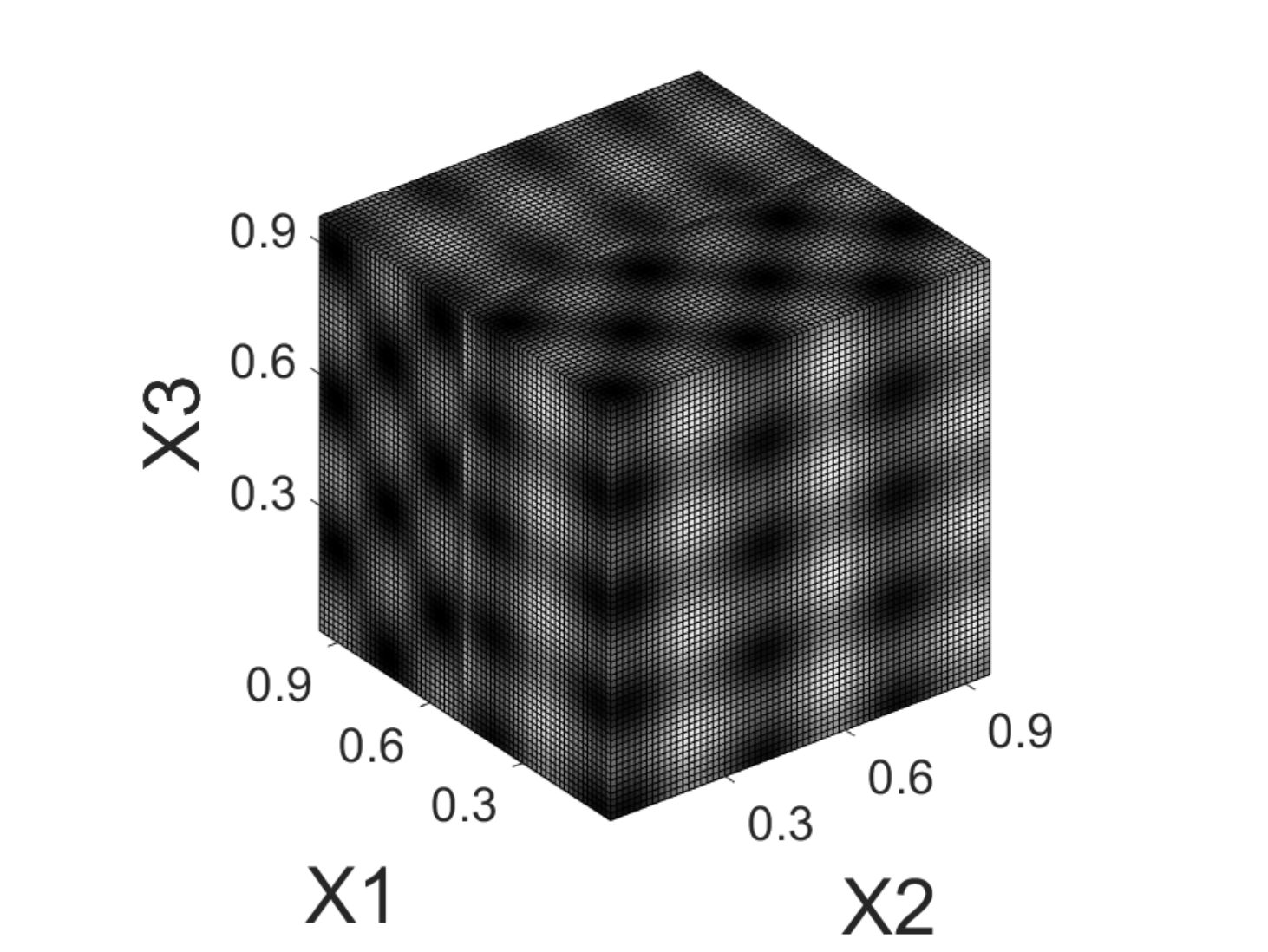} &
      \includegraphics[width=2in]{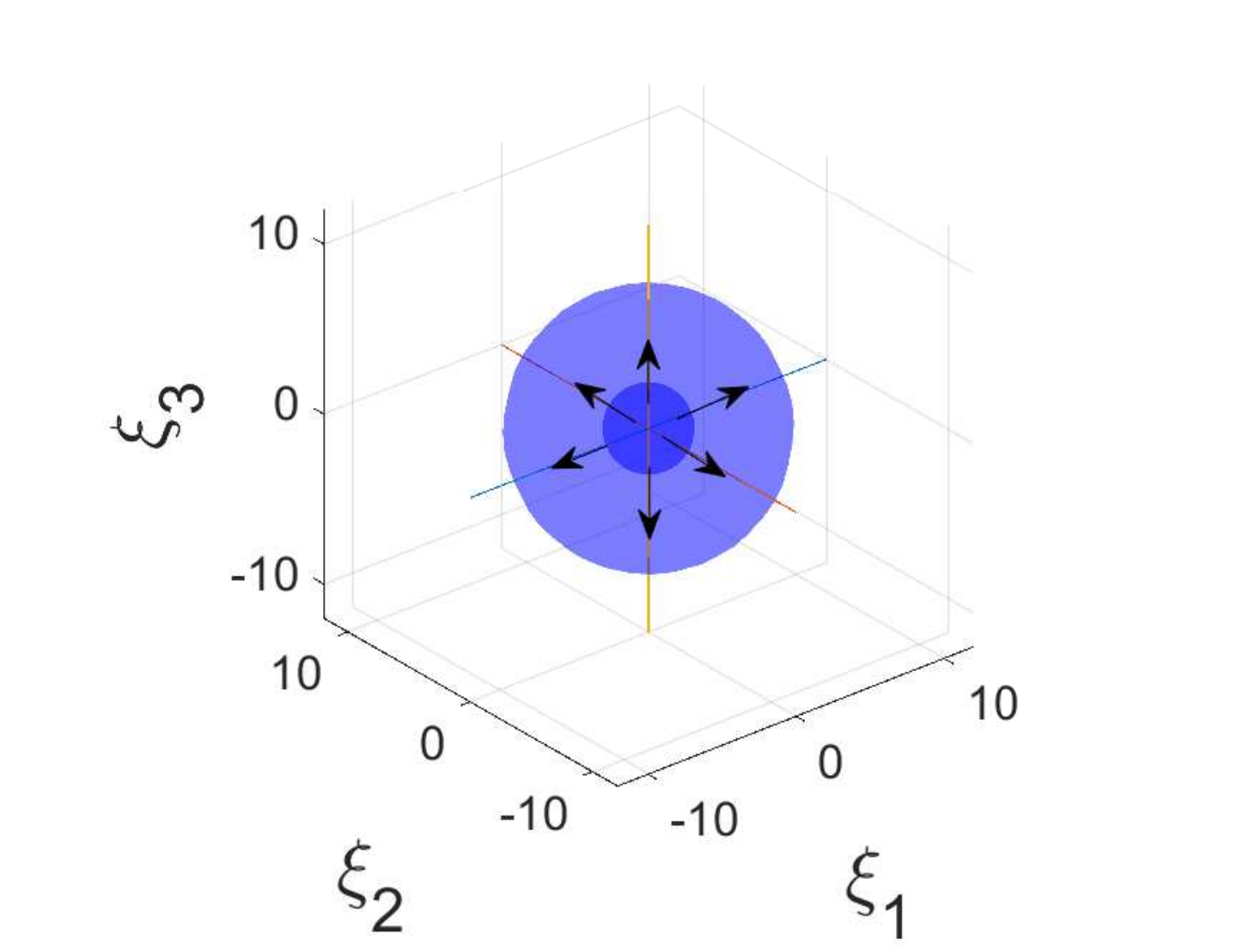} &
      \includegraphics[width=1.4in]{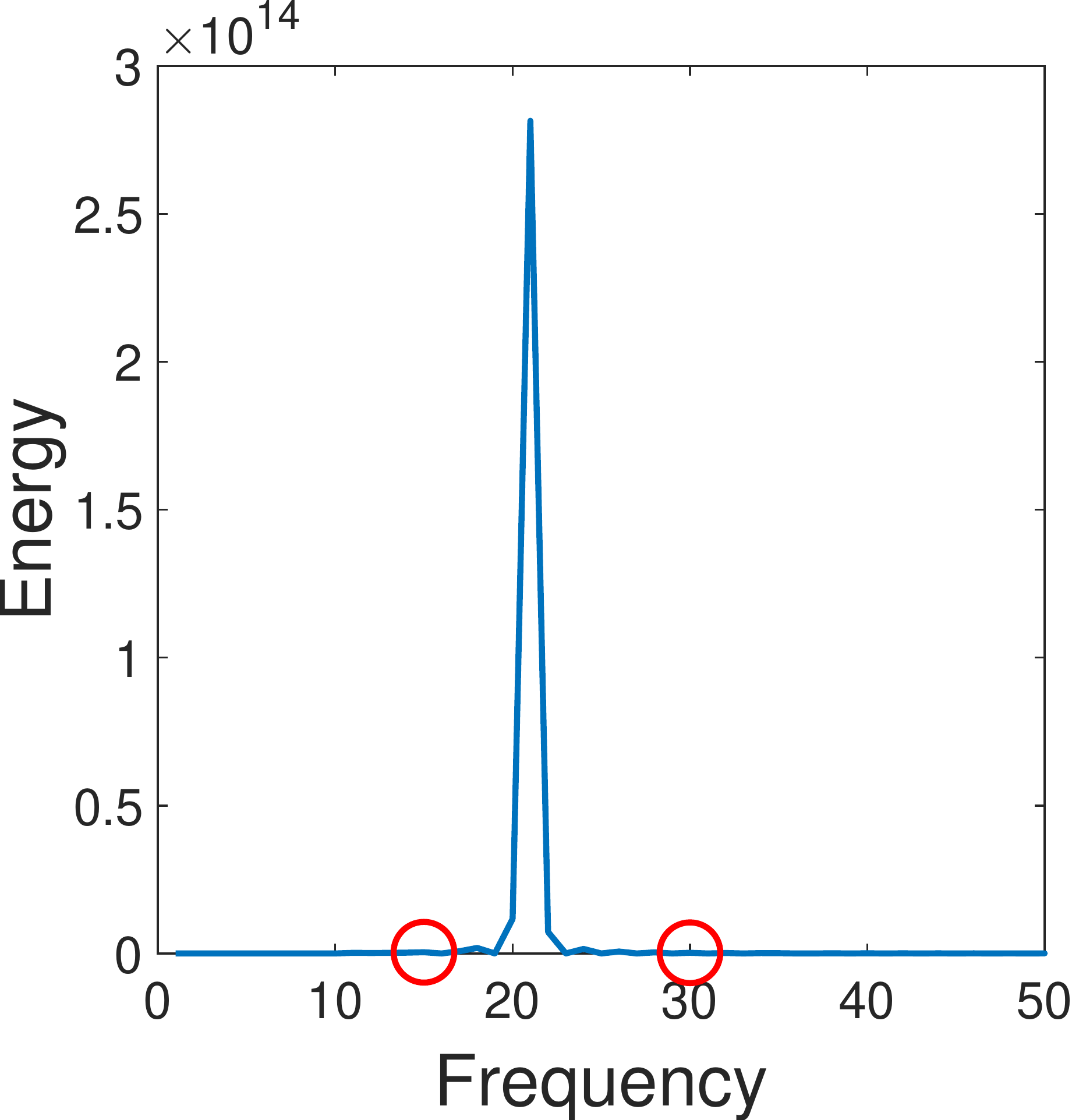} 
    \end{tabular}
  \end{center}
  \caption{Left: an undeformed example of two cubic grains with a vertical
boundary surface. Middle: six dominant local wave vectors in the interior of a grain. All vectors are located in the area in light blue determined by the frequency range parameters $[r_1,r_2]$. Right: the radially average Fourier power spectrum $E(r)$ with the identified most dominant energy bump in the frequency range $[r_1,r_2]$ indicated by two red circles.}
  \label{fig:bump}
\end{figure}

\subsection{Band-limited $3D$ SST in the spherical coordinate}
Typically, each grain
\[
\chi_{\Omega_k} (x) (\alpha_k(x) S(2\pi N \phi_k(x))+c_k(x))  
\]
in a polycrystalline crystal image can be considered as a $3D$ general intrinsic mode type function of type-$(M,N)$ with a small $M$ near $1$, unless the strain is too large. Hence, the $3D$ Fourier power spectrum of a multi-grain image would have several well-separated non-zero energy annuli centered at the origin due to crystal rotations. See Figure \ref{fig:bump} (left and middle) for an example. Suppose the radically average Fourier power spectrum is defined as 
\[
E(r) =\frac{1}{r}\int_0^{2\pi}\int_0^\pi |\widehat{f}(r,\theta,\psi)|d\psi d\theta,
\]
where $\widehat{f}(r,\theta,\psi)$ is the Fourier transform in the spherical coordinate, i.e., \[\widehat{f}(r,\theta,\psi)=\widehat{f}(\xi)\quad\text{for}\quad\xi=(r\sin\theta\cos\psi,r\sin\theta\sin\psi,r\cos\theta).\] Then there would be several well-separated energy bumps in $E(r)$. See Figure \ref{fig:bump} (right) for an energy bump of the example of Figure \ref{fig:bump} (left).

To realize this step, a grid of step size $\Delta$ is generated to discretize the domain $[0,\infty)$ in the variable $r$ as follows:
\[
R = \{n\Delta: n\in \NN\}.
\]
At each $r = n\Delta \in R$, we associate a cell $D_r$
started at $r$
\[
D_r = 
\left[n\Delta,(n+1)\Delta\right).
\]
Then $E(r)$ is estimated by
\[
E(r) = \frac{1}{r}\sum_{\xi\in\Xi: \left|\xi\right|\in D_r }\left| \widehat{f}(\xi)\right|.
\]

According to the structure of cubic lattices, we know that a cubic crystal image with a single grain 
\[
f(x)=\alpha(x) S ( 2\pi N\phi(x) ) + c(x)
\] 
has six dominant local wave vectors close to $\nu_j(\phi(x))$, $j=1$, $2$, $\cdots,6$, which are the vertices of a cubic prisms centered at the origin in the Fourier domain:
\[
\nu_j(\phi(x))=N\nabla(n_j\cdot\phi(x)),\quad j=1,2,\dots, 6,\quad\text{where }\{n_j\}=\{\pm e_1,\pm e_2,\pm e_3\},
\]
and $\{e_1,e_2, e_3\}$ is the standard basis of $\mathbb{R}^3$ in the Fourier domain with axes $\xi=(\xi_1,\xi_2,\xi_3)$. 


To reduce the computational cost of $3D$ SST, we restrict the computation to the spectrum domain that contains the most dominant local wave vectors. The range of such a domain can be identified via the most dominant energy bump in the radially average Fourier power spectrum $E(r)$. Figure \ref{fig:bump} (middle) visualize the range of frequency domain of interest identified by the dominant energy bump in Figure \ref{fig:bump} (right). Suppose the support of the most dominant energy bump of $E(r)$ (i.e. frequency band) is $[r_1,r_2]$. Then a band-limited $3D$ fast SST can be introduced to estimate local wave vectors with wave numbers in $[r_1,r_2]$. 

To be more specific, we consider images that are periodic over the unit cubic $[0,1)^3$ in $3D$. If it is not the case, the images will be periodized by padding zeros around the image boundary.  The key idea of the band-limited $3D$ SST is to restrict the class of wave packets to a band-limited class
\[
\left\lbrace w_{ab}(x): a,b\in \RR^3, |a|\in [r_1,r_2]\right\rbrace.
\]
And the $3D$ SST $T_f(a,b)$ of an image $f$ is only evaluated in the domain $\left\lbrace (a,b): a,b\in \RR^3, |a|\in [r_1,r_2]\right\rbrace$.

To design discrete wave packets, let
\[
X = \{ (n/L: n \in \Z^3, \quad 0 \le n_j<L, \text{ for }1\leq j \leq 3\}
\]
be the spatial grid of size $L$ in each dimension at which these functions are sampled.
The corresponding Fourier grid is
\[
\Xi = \{ \xi\in \Z^3: -L/2\le \xi_j <L/2, \text{ for } 1\leq j\leq 3\}.
\]
For a function $f(x)\in \ell^3(X)$, the discrete forward Fourier
transform is defined by
\[
\hat{f}(\xi) = \frac{1}{L^{3/2}} \sum_{x\in X} e^{-2\pi i x\cdot \xi} f(x),
\]
while the discrete inverse Fourier transform of $g(\xi)\in
\ell^3(\Xi)$ is
\[
\check{g}(x) = \frac{1}{L^{3/2}} \sum_{\xi\in \Xi} e^{2\pi i  x\cdot \xi} g(\xi).
\]

Due to the localization requirement of wave packets in the frequency domain in the analysis of crystal images via SST, a filterbank-based time-frequency transform is applied to design the discrete wave packet transform. In the Fourier domain, $\widehat{w_{ab}}(\xi)$ has the profile
\begin{equation}
  \label{DSSWPT:eqn:wnd}
  |a|^{-3s/2} \widehat{w}(|a|^{-s}(\xi-a)),
\end{equation}
modulo complex modulation. We sample the
Fourier domain $[-L/2,L/2]^3$ with a set $A$ of points $a$ such that $|a|$ is essentially in $[r_1,r_2]$ (as shown as ``*" symbol in
Figure \ref{DSSWPT:fig:tiling1D}) and associate with each $a$ a smooth non-negative window
function $g_a(\xi)$  with a rectangular compact support of length $L_a=O(|a|^s)$ (see Figure \ref{DSSWPT:fig:tiling1D} in blue) that behaves qualitatively as $\widehat{w}(|a|^{-s}(\xi-a))$ essentially centered at $a$. These window functions form a partition of unity of the Fourier domain.

\begin{figure}[ht!]
  \begin{center}
    \begin{tabular}{c}
     \includegraphics[height=2.5in]{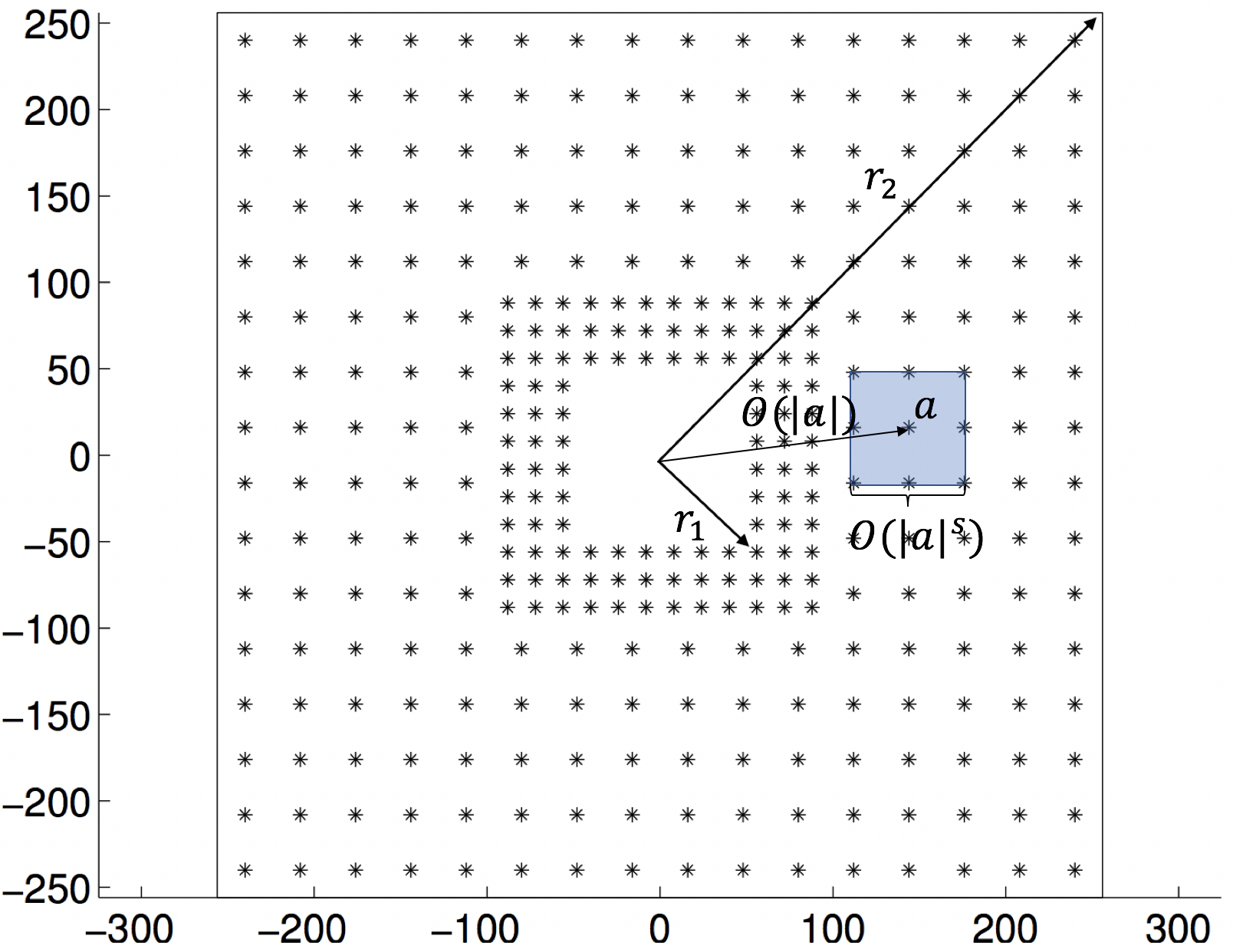} 
    \end{tabular}
  \end{center}
  \caption{Sampled set $A$ of $a$'s in the Fourier domain in the range specified by $[r_1,r_2]$ (projected to $2D$ on the $a_1$-$a_2$ plane for visualization) for an image of size $512\times512\times512$. Each point represents the center of the support of a window function with a rectangular compact support of length $L_a=O(|a|^s)$ in blue.}
  \label{DSSWPT:fig:tiling1D}
\end{figure}

The way we constructed the wave packets is similar to the constructions of the wave atom frame in \cite{Demanet2007}. When $s=1/2$, our wave packets become wave atoms. A straightforward calculation shows that the total number of
samples $a$'s is of order $O(L^{3(1-s)})$.

In the spatial doamin, we simply discretize it with a uniform grid of size $L_B$ in each dimension as follows:
\[
B = \{ n/L_B: n\in \Z^3,\quad 0 \le n_j<L_B, \text{ for } 1\leq j\leq 3\}.
\]

For each fixed $a\in A$ and $b\in B$, the discrete wave packet $w_{ab}(x)$ is defined as
\begin{equation}
\label{DSSWPT:eqn:DSSWPTFFT}
\widehat{w_{ab}}(\xi) = \frac{1}{L_a^{3/2}} e^{-2\pi i b\cdot \xi} g_a(\xi)
\end{equation}
for $\xi \in \Xi$. 

For a function $f(x)$ defined on $x\in X$, the discrete wave packet
transform is a map from $\ell_2(X)$ to $\ell_2(A\times B)$, defined by
\begin{equation}
  W_f(a,b) = \langle w_{ab}, f \rangle =  \langle \widehat{w_{ab}}, \hat{f} \rangle = \int \overline{\widehat{w_{ab}}(\xi)} \hat{f}(\xi)d\xi=
  \frac{1}{L_B^{3/2}} \sum_{\xi\in\Xi} e^{2\pi i b\cdot\xi} g_a(\xi) \hat{f}(\xi).
  \label{DSSWPT:eqn:D1}
\end{equation}

Note that
\[
\grad_b W_f(a,b) = \grad_b \langle \widehat{w_{ab}},\hat{f}\rangle = 
\langle -2\pi i \xi\widehat{w_{ab}}(\xi),\hat{f}(\xi) \rangle.
\]
Therefore, the discrete gradient $\grad_b W_f(a,b)$ can be evaluated 
in a similar way
\begin{equation}
  \grad_b W_f(a,b) = \sum_{\xi\in\Xi} \frac{1}{L_B^{3/2}} 2\pi i \xi e^{2\pi
    i b\cdot\xi} g_a(\xi) \hat{f}(\xi).
  \label{DSSWPT:eqn:D3}
\end{equation}

Applying the fast Fourier transform (FFT) to evaluate the summation in \eqref{DSSWPT:eqn:D1} and \eqref{DSSWPT:eqn:D3}, the wave packets admit a fast transform with complexity of $O(L^3\log L + L^{3(1-s)} L_B^3 \log L_B)$ with
$L_B \ge \max_{a\in A} L_a = O(L^s)$, where $L$ is bounded by $r_2$, the frequency range parameter. 

For a given crystal image $f(x)$ defined on $x\in X$, we compute $W_f(a,b)$ and $\grad_b W_f(a,b)$ via the FFT. The approximate local wavevector
$v_f(a,b)$ is then estimated by
\[
v_f(a,b) = \frac{\grad_b W_f(a,b)}{2\pi i W_f(a,b)}
\]
for $a$ and $b$ in the domain $R_{\eps} = \{(a,b): a\in A, b\in B, |W_f(a,b)|\geq \sqrt \eps\}$.

To specify the SST $T_f(v,\psi,\theta,b)$  in the spherical coordinate $(v,\psi,\theta)$ of the Fourier domain, we use step-sizes $\Delta_v$, $\Delta_\psi$, and $\Delta_\theta$ in $v$, $\psi$, and $\theta$, respectively, to construct grids $G_v = \{n\Delta_v: n\in \Z, n\Delta_v\in[r_1,r_2]\}$, $G_\psi = \{n\Delta_\psi: n\in \Z, n\Delta_\psi\in[0,2\pi]\}$, and $G_\theta = \{n\Delta_\theta: n\in \Z, n\Delta_\theta\in[0,\pi]\}$.
At each $v = n\Delta_v \in G_v$, we associate a cell $D_v$
centered at $v$, i.e., $D_v = \left[(n-\frac{1}{2})\Delta_v,(n+\frac{1}{2})\Delta_v\right)$. Similarly, we have cells $D_\psi = \left[(n-\frac{1}{2})\Delta_\psi,(n+\frac{1}{2})\Delta_\psi\right)$ and $D_\theta = \left[(n-\frac{1}{2})\Delta_\theta,(n+\frac{1}{2})\Delta_\theta\right)$ at the grid $\psi=n\Delta_\psi$ and $\theta=n\Delta_\theta$, respectively. 
Then the discrete SST in a spherical coordinate is defined as
\[
T_f(v,\psi,\theta,b) = \sum_{(a,b)\in R_{\eps}:\Re v_f(a,b)\in D_v\times D_\psi\times D_\theta} | W_f(a,b)|^2.
\]

\begin{figure}[ht!]
  \begin{center}
    \begin{tabular}{ccc}
      \includegraphics[width=1.6in]{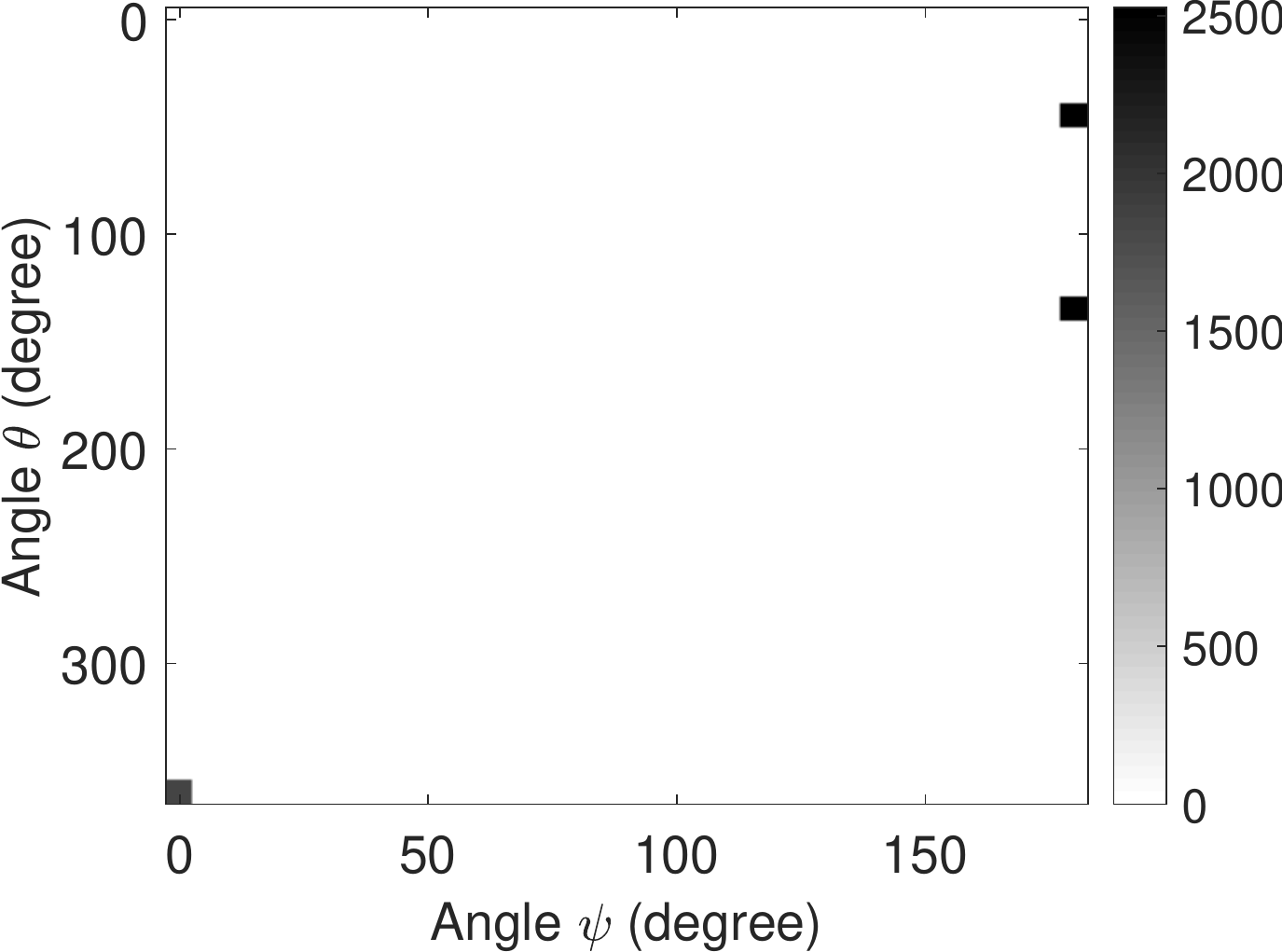} &
      \includegraphics[width=1.6in]{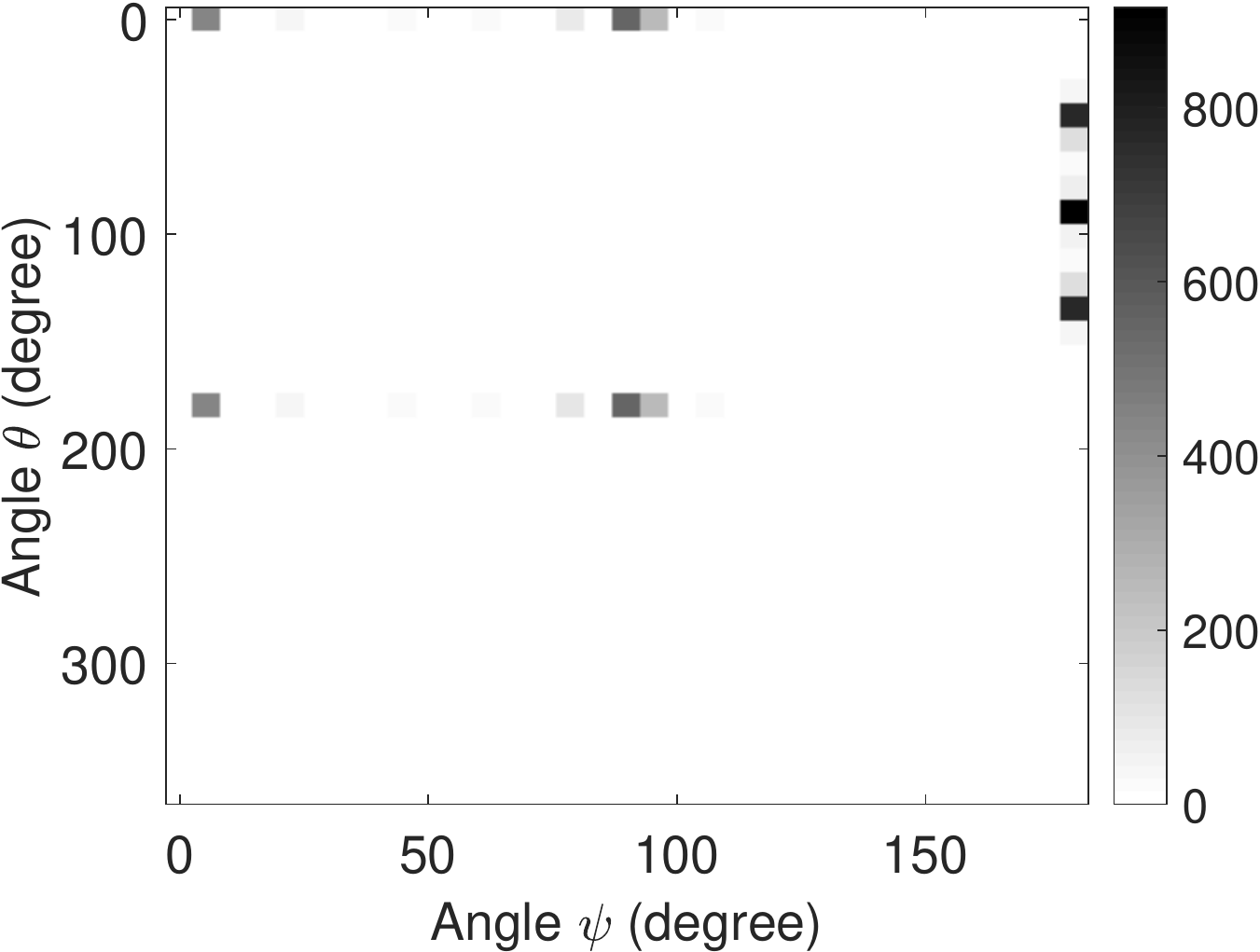} &
      \includegraphics[width=1.6in]{fig/sample7(cubic)/Dis_bump_Interior.pdf} 
    \end{tabular}
  \end{center}
  \caption{The SST $T_f(v,\psi,\theta,b)$ of Figure \ref{fig:bump} (left) in the spherical coordinate at three different points $b_i$, $i = 1,$ $2$, and $3$ when $v$ is set to be the length of local wave vectors. Left: $b_1 = (0.25, 0.5,0.5)$ is in the middle of the right grain. Middle:
$b_2 = (0.5, 0.5,0.5)$ is at the boundary. Right: $b_3 = (0.75, 0.5,0.5)$ is in the middle of the left grain. }
  \label{fig:defc}
\end{figure}

As an example, Figure \ref{fig:defc} shows the SST $T_f(v,\psi,\theta,b)$ in a spherical coordinate at three different positions of a crystal image in Figure \ref{fig:bump} (left). The results show that the essential support of $T_f(v,\psi,\theta,b)$ concentrates around local wave vectors of a grain $\alpha(x) S ( 2\pi N\phi(x) ) + c(x)$:
\[
\nu_j(\phi(b))=N\nabla(n_j\cdot\phi(b)),\quad j=1,2,\dots, 6,\quad\text{where }\{n_j\}=\{\pm e_1,\pm e_2,\pm e_3\},
\]
when the location $b$ is not at the boundary. When $b$ is at the boundary, the essential support of $T_f(v,\psi,\theta,b)$ can still provide some information, e.g. crystal rotations.

\subsection{Defect detection algorithm}
\label{sec:def}
As we can see in Figure \ref{fig:defc}, the synchrosqueezed energy distribution has supports around local wave vectors $\{\nu_j(\phi(b))\}_{j=0,1,\dots, 5}$ with energy of order $|\widehat{S}(n)|\alpha(b)$, when $b$ is in the grain interior. Moreover, the energy would decrease fast near defects. This motivates the application of SST to identify defects by detecting the irregularity of energy distribution as follows. Due to  the symmetry of crystal lattice, we only consider the domain when $(\psi,\theta)\in[0,\pi)\times[0,\pi)$, in which we have three dominate local wave vectors, denoted as $\{\nu_j(\phi(b))\}_{j=1,2,3}$. Let $\delta>0$ be a small parameter for the size of the supports of the SST around $\{\nu_j(\phi(b))\}_{j=1,2,3}$ and $B_\delta(\nu_j(\phi(b)))$ be the ball centered at $\nu_j(\phi(b))$ with a radius $\delta$. As we can see in Figure \ref{fig:defc}, when $b$ is in the grain interior, these balls can be easily estimated; while when $b$ is near the defect location, we could only identify three balls that most possibly contain the energy of $T_f(v,\psi,\theta,b)$. We define
\[
\mass(b) :=\frac{\displaystyle \sum_{j=1}^3\int_{(v,\psi,\theta)\in B_\delta(\nu_j(\phi(b)))} T_f(v,\psi,\theta,b) \ud v\ud \psi\ud\theta}{\displaystyle \int_{(v,\psi,\theta)\in [r_1,r_2]\times [0,\pi)\times[0,\pi)} T_f(v,\psi,\theta,b) \ud v \ud \psi\ud\theta}\,.
\]
Then $\mass(b)$ will be close to $1$ in the interior of a grain; while $\mass(b)\ll 1$ when $b$ is near the defect location. Figure~\ref{fig:BD} (left) visualizes $\mass(b)$ of the crystal image in Figure \ref{fig:bump} (left).

Hence, the estimate of defect regions can be
obtained by thresholding $\mass(b)$ at some value $\eta\in(0,1)$ according to
\begin{equation*}
\Omega_d=\{b\in\Omega\ :\ \mass(b)<\eta\}\,,
\end{equation*}
an illustration of which is shown in Figure~\ref{fig:BD} (right). 

\begin{figure}[ht!]
  \begin{center}
    \begin{tabular}{cc}
      \includegraphics[width=1.8in]{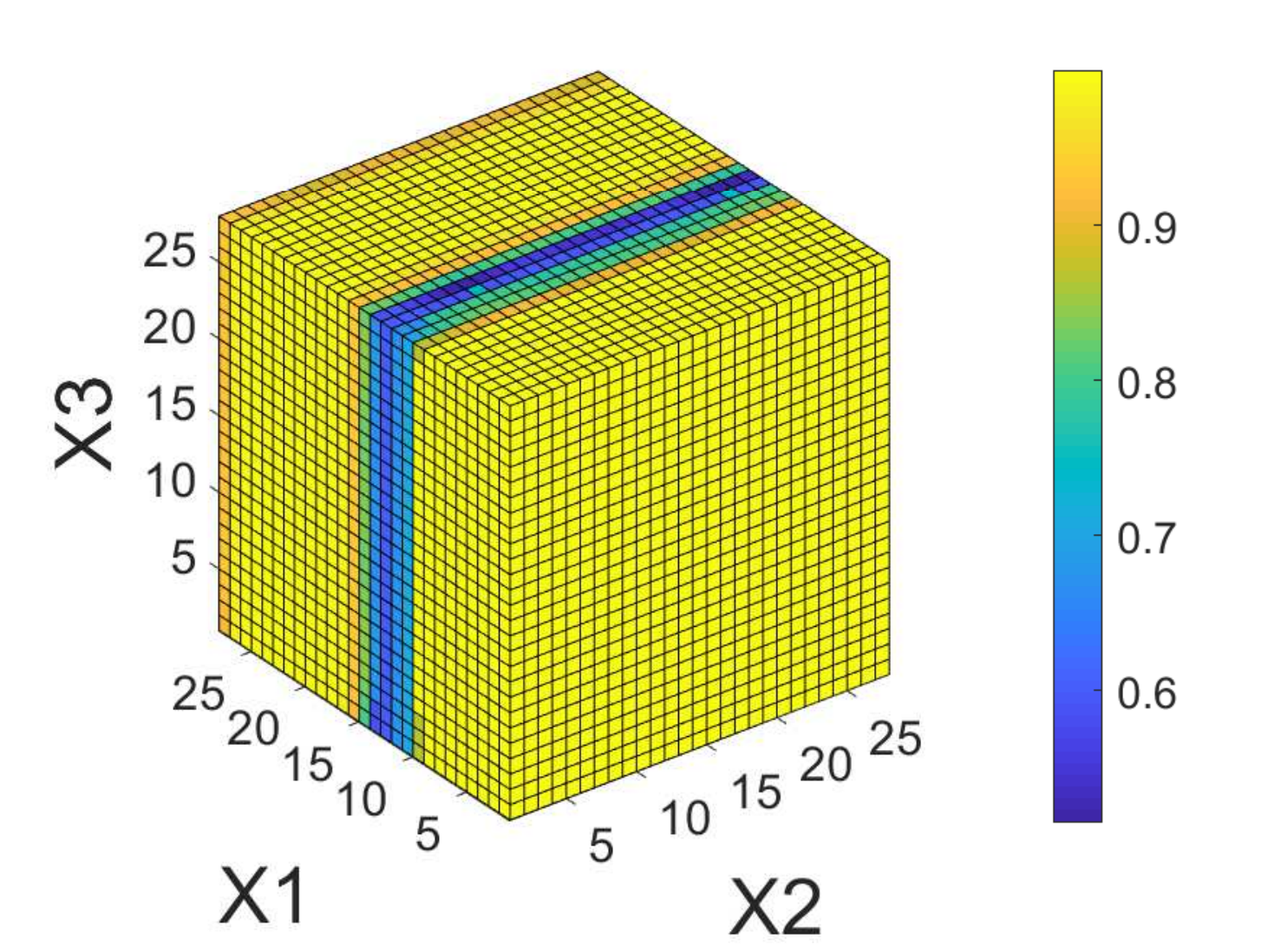} &
      \includegraphics[width=1.8in]{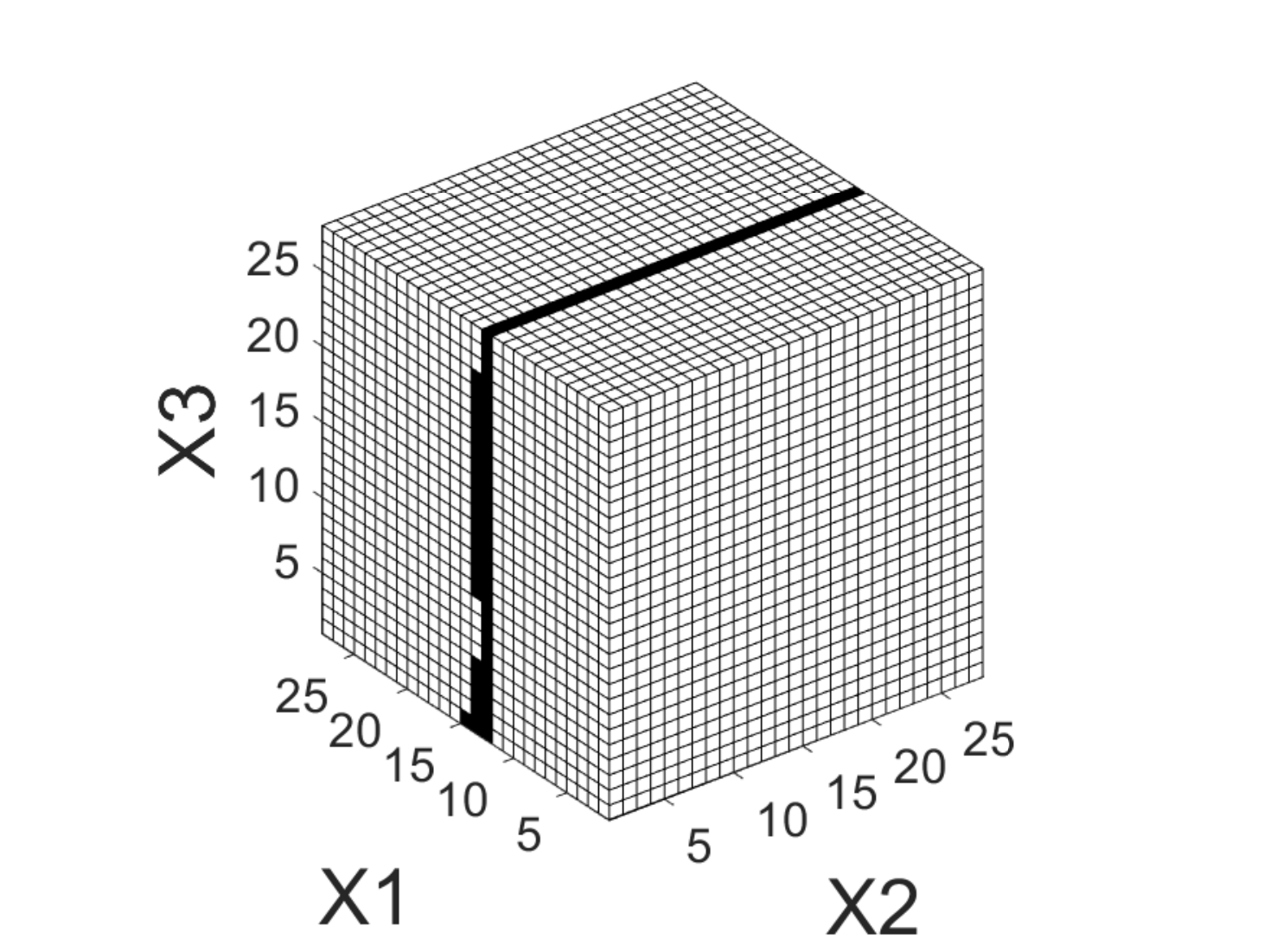} 
    \end{tabular}
  \end{center}
  \caption{Left: the $\mass(b)$ of the crystal image in Figure \ref{fig:bump} (left). Right: the area in black indicates the estimated defect area $\Omega_d$ of the crystal image in Figure \ref{fig:bump} (left). }
  \label{fig:BD}
\end{figure}

\subsection{Recovery of local inverse deformation gradient}
\label{sec:def}
Instead of estimating the elastic deformation $\phi(x)$ of a grain directly, we would equivalently estimate the local inverse deformation gradient $\nabla\phi(x)\in\mathbb{R}^{3\times 3}$, where
\[
\grad \phi(x)=\left(
\begin{array}{ccc}
              \partial_{x1}\phi_1(x) & \partial_{x2}\phi_1(x) &  \partial_{x3}\phi_1(x)\\
              \partial_{x1}\phi_2(x) & \partial_{x2}\phi_2(x) & \partial_{x3}\phi_2(x)\\
             \partial_{x1}\phi_3(x) & \partial_{x2}\phi_3(x) &\partial_{x3}\phi_3(x)\\
\end{array} 
              \right)  ,
\] 
since  $\nabla\phi(x)$ leads to the estimation of crystal rotations and principal stretches.

With the three balls $\{B_\delta(\nu_j(\phi(x)))\}_{j=1,2,3}$ identified in Section \ref{sec:def}, we can estimate their corresponding local wave vectors $\{\nu_j(\phi(x))\}_{j=1,2,3}$ via the weighted average of the location of non-zero SST as follows. Let
\[
(v^{est}_j (x),\psi^{est}_j (x),\theta^{est}_j (x)):=\frac{\displaystyle \int_{(v,\psi,\theta)\in B_\delta(\nu_j(\phi(x)))} (v,\psi,\theta)T_f(v,\psi,\theta,b) \ud v\ud \psi\ud\theta}{\displaystyle \int_{(v,\psi,\theta)\in B_\delta(\nu_j(\phi(x)))} T_f(v,\psi,\theta,b) \ud v \ud \psi\ud\theta}\,,
\]
then the estimated local wave vector in the Cartesian coordinate is
\begin{equation*}
\nu_j^{est}(\phi(x)):=\left(v^{est}_j (x)\sin(\psi^{est}_j (x)) \cos(\theta^{est}_j (x)) ,  v^{est}_j (x) \sin(\psi^{est}_j (x)) \sin(\theta^{est}_j (x)) , v^{est}_j (x) \cos(\psi^{est}_j (x)\right)
\end{equation*}
for $ j=1$, $2$, and $3$.

Note that the local wave vectors of an undeformed reference cubic grain $\alpha(x) S ( 2\pi Nx ) + c(x)$ are $\nu_j^{ref}(x)=N e_j$ for $j=1$, $2$, and $3$. Hence, the inverse deformation gradient $\nabla \phi(x)$ is determined by a least squares fitting of the deformed local wave vectors $\nu_j^{est}(\phi(x))$ to the reference local wave vectors $N e_j$:
\[
\nabla \phi(x)\approx G(x) = \underset{\tilde{G}}{\argmin} \sum_{j=1}^3 \norm{\nu_j^{est}(\phi(x)) - N \tilde{G} e_j}_2^2,
\]
where $N$ can be estimated by the weighted average of the radically average Fourier power spectrum via $N\approx \int_{r_1}^{r_2}r E(r)\ud r$.

With the local inverse deformation gradient estimation $G(x)\in\mathbb{R}^{3\times 3}$ for each $x$, we can estimate crystal rotations and principle stretches by the polar decomposition of $G(x)$, i.e., $G(x)=R(x)U(x)$, where $R(x)\in\mathbb{R}^{3\times 3}$ is a unitary matrix implying how to rotate vectors $\{N G e_j\}$ to match reference vectors $\{\nu_j^{est}(\phi(x))\}$, and $U(x)$ is a positive-semidefinite Hermitian matrix describing how to stretch $\{N G e_j\}$ to match $\{\nu_j^{est}(\phi(x))\}$.

In the $3$D space, the orientation of a crystal lattice compared to a reference lattice can be described by a series of elemental rotations, that is, rotations around the axes along $e_1$, $e_2$, and $e_3$ in the Cartesian coordinate. Generally, the composition of these three elemental rotations is capable of recovering any rotation in the $3$D space.  Suppose $\alpha(x)$, $\beta(x)$, and $\gamma(x)$ are the Euler angles of elemental rotations along $e_1$, $e_2$, and $e_3$ to generate the orientation of a crystal lattice, then $\alpha(x)$, $\beta(x)$, and $\gamma(x)$ can be computed via the unitary matrix $R(x)$ in the polar decomposition $G(x)=R(x)U(x)$, since
\[
R=\left(
\begin{array}{ccc}
\cos\beta\cos\gamma & -\cos\beta\sin\gamma & \sin\beta\\
\cos\alpha + \cos\beta\sin\alpha\sin\beta & \cos\alpha\cos\gamma - \sin\alpha\sin\beta\sin\gamma & -\cos\beta\sin\alpha\\
\sin\alpha\sin\gamma - \cos\alpha\cos\gamma\sin\beta & \cos\gamma\sin\alpha + \cos\alpha\sin\beta\sin\gamma & \cos\alpha\cos\beta
\end{array}
\right).
\]
As an example, Figure \ref{fig:agl} shows the estimated Euler angles $\alpha(x)$, $\beta(x)$, and $\gamma(x)$ of the crystal orientation of the crystal image in Figure \ref{fig:bump}. 

     \begin{figure}[ht!]
  \begin{center}
    \begin{tabular}{ccc}
      \includegraphics[width=1.6in]{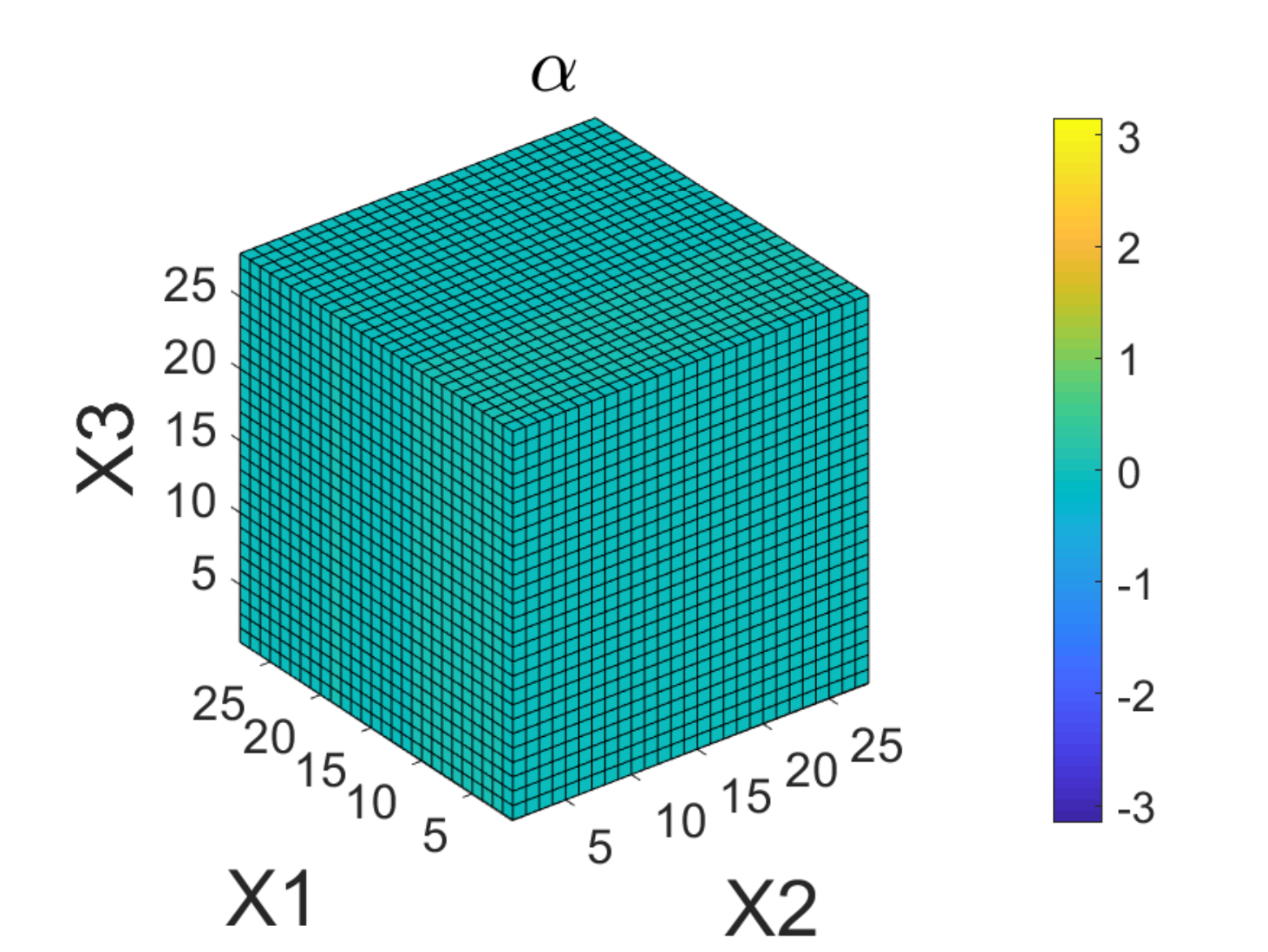}  &
      \includegraphics[width=1.6in]{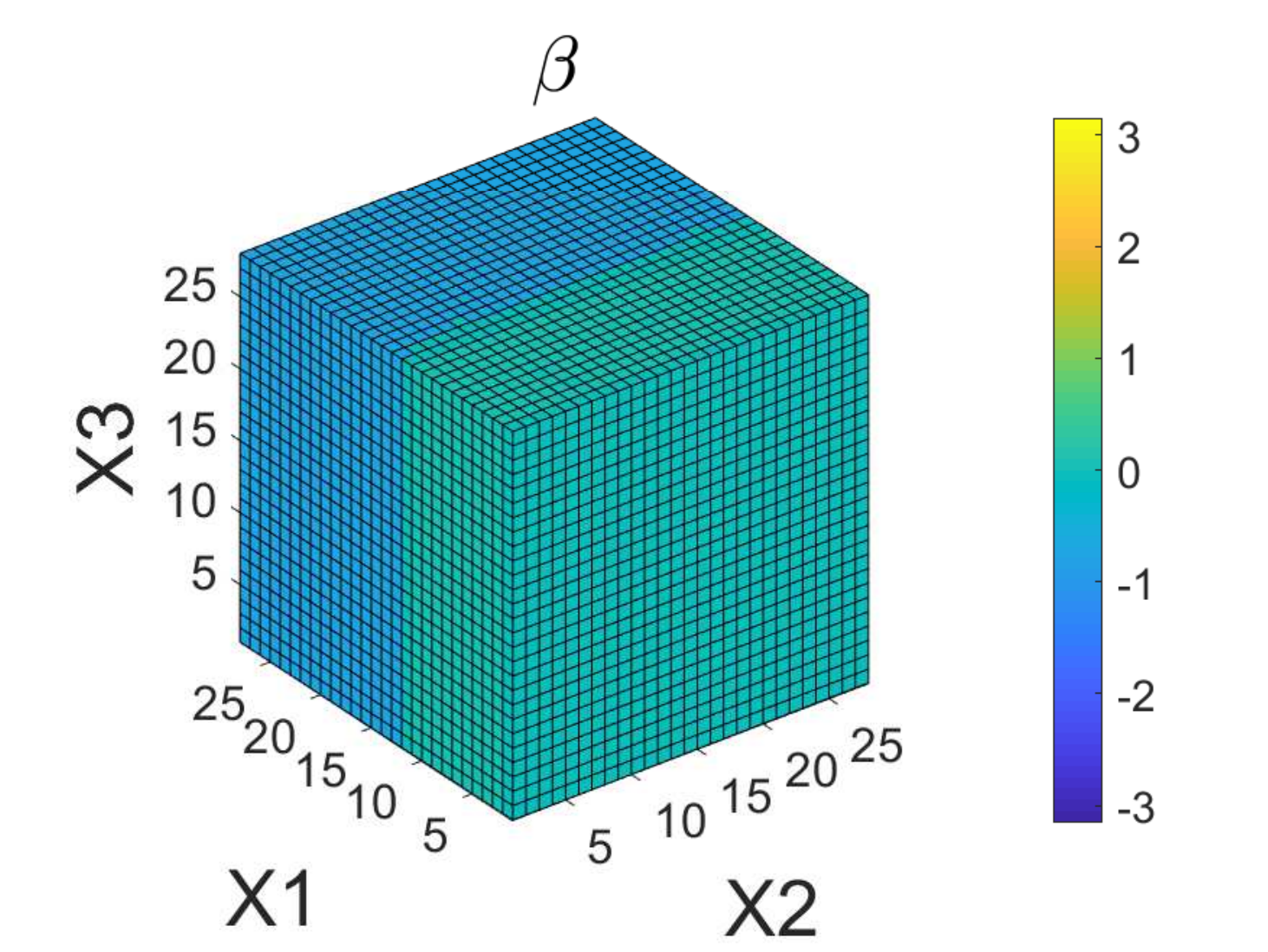} &
      \includegraphics[width=1.6in]{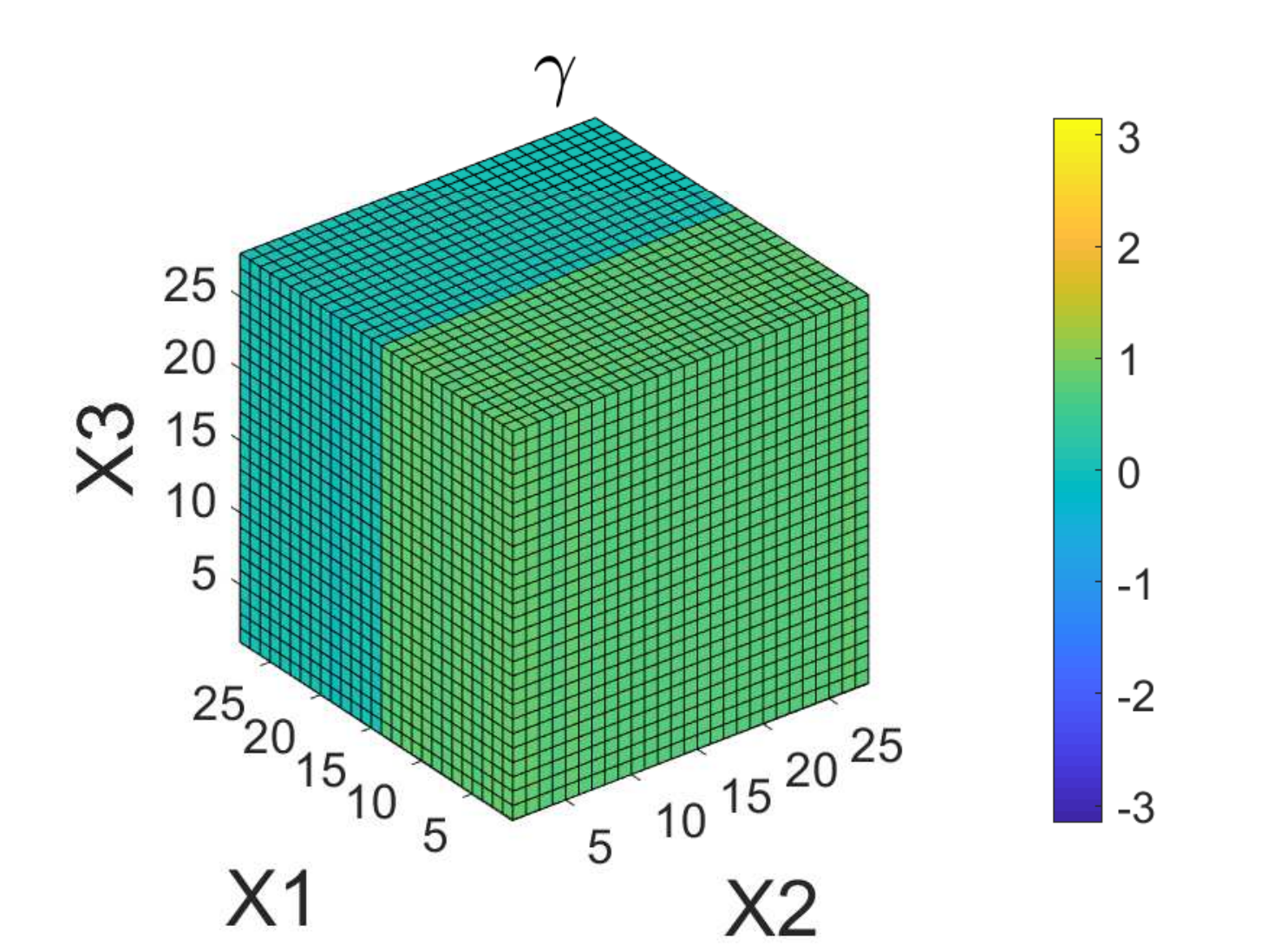} 
    \end{tabular}
  \end{center}
  \caption{Estimated Euler angles of the crystal orientation of the crystal image in Figure \ref{fig:bump}. From left to right: $\alpha(x)$, $\beta(x)$, and $\gamma(x)$, respectively.}
  \label{fig:agl}
\end{figure}

\section{Examples And Discussions}
\label{sec:results}
In this section, we present several numerical examples of synthetic and real images to illustrate the performance of our method. The corresponding code is open source and 
will be available as SynCrystal at \url{https://github.com/SynCrystal/SynCrystal}.
We first apply our method to analyze synthetic crystal images with triple junction grain boundaries and isolated defects. To show the robustness of our method, both noiseless
and noisy examples are presented. In the second part, we apply our algorithm to crystal images from real experiments.

The main parameters for the $3D$ SST is the geometric scaling
parameter $s=\frac{1}{2}$\footnote{Although in theory we cannot guarantee the performance of SST when $s=\frac{1}{2}$, it works well in practice.}. The accuracy of our algorithm is not sensitive to the discretization grids in the SST and the crystal image analysis, while the speed and memory cost of our code depends on the grid sizes. Hence, we adaptively choose the grid sizes according to the size of data and the resolution we need for visualization.

\subsection{Synthetic atomic resolution crystal images}

\subsubsection{Triple junction grain boundaries} 

Our first example is a synthetic hexagonal crystal image with triple junction grain boundaries in Figure \ref{fig:tri1} (left). It contains three grains with triple junction grain boundaries. As shown in Figure \ref{fig:tri1} (middle), $\text{mass}(x)$ clearly visualizes the grain boundaries and after thresholding it gives the location of the boundaries in Figure \ref{fig:tri1} (right). Figure \ref{fig:tri2} shows the estimated Euler angles of the crystal orientation of the crystal image in Figure \ref{fig:tri1} (left) (from left to right: $\alpha(x)$, $\beta(x)$, and $\gamma(x)$, respectively). A noisy version of the triple junction example and its analysis results are given in Figure \ref{fig:tri3} and \ref{fig:tri4}. The results in Figure \ref{fig:tri3} and \ref{fig:tri4} are comparable to those in Figure \ref{fig:tri1} and \ref{fig:tri2}, even though the atom structure is hardly seen in the presence of heavy noise (see the comparison of the zoomed-in picture in the corner of Figure \ref{fig:tri3} (left) and that in the corner of Figure \ref{fig:tri1} (left)).

\begin{figure}[ht!]
  \begin{center}
    \begin{tabular}{ccc}
      \includegraphics[width=1.8in]{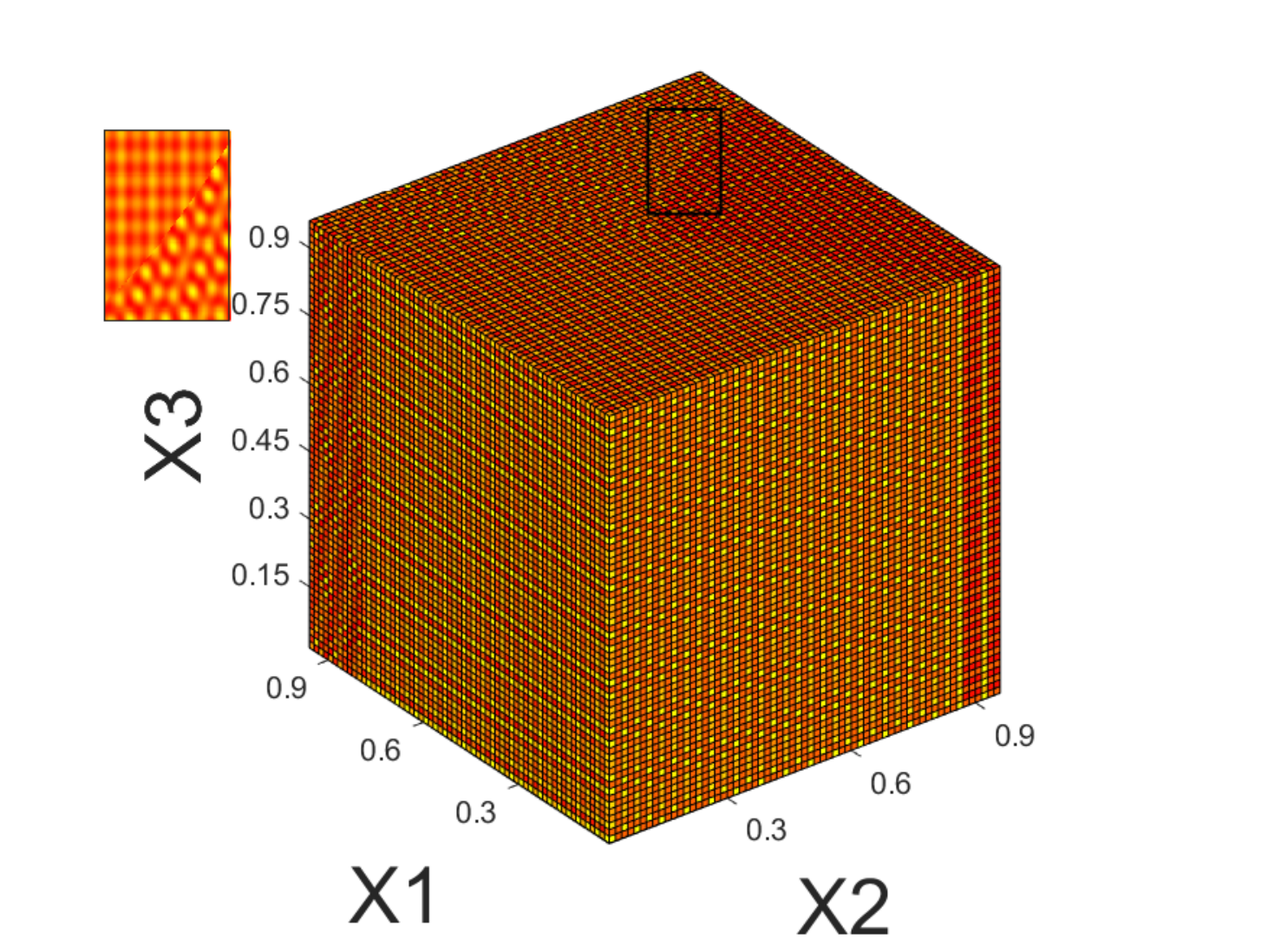} &
      \includegraphics[width=1.8in]{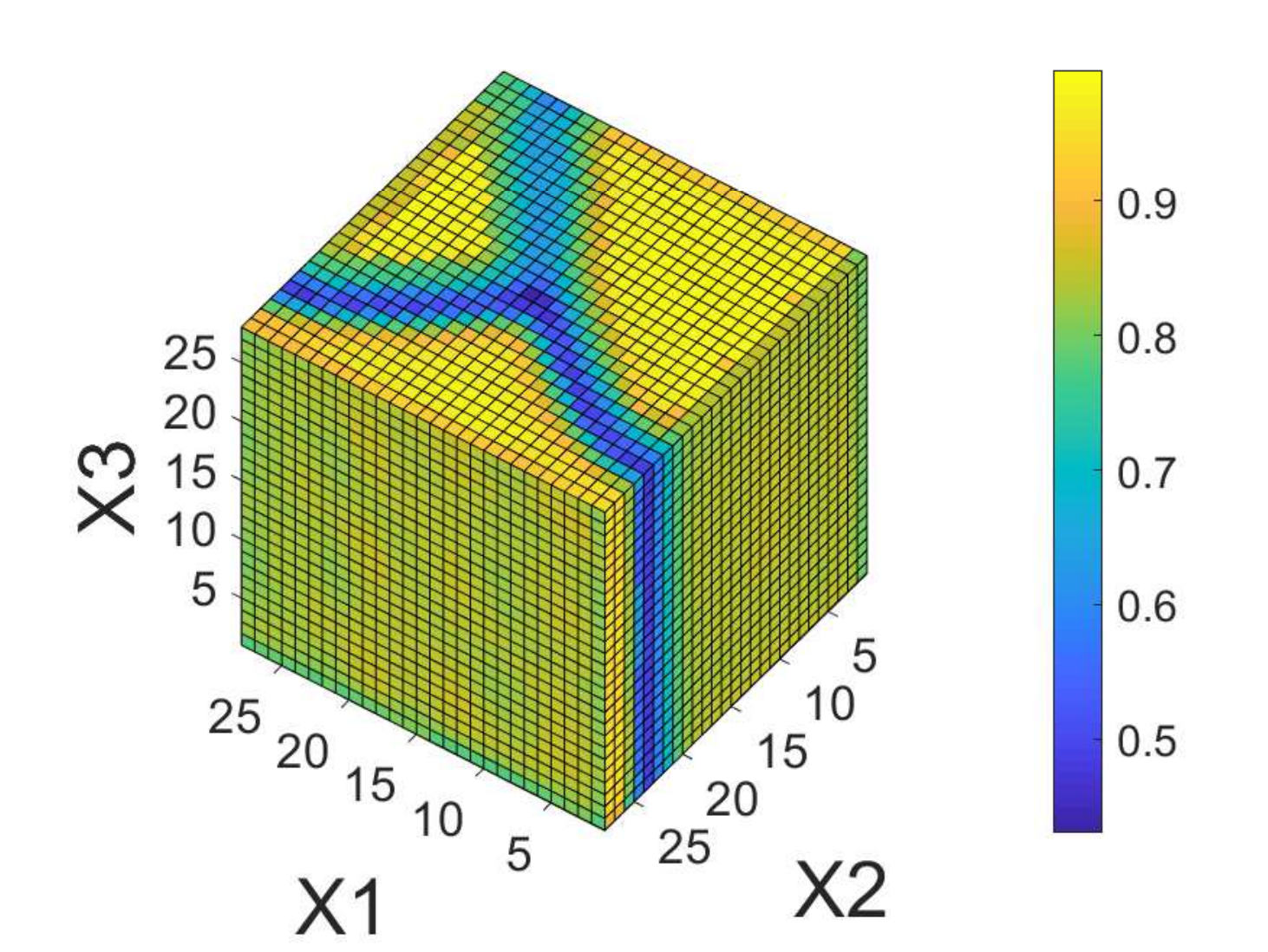} &
      \includegraphics[width=1.8in]{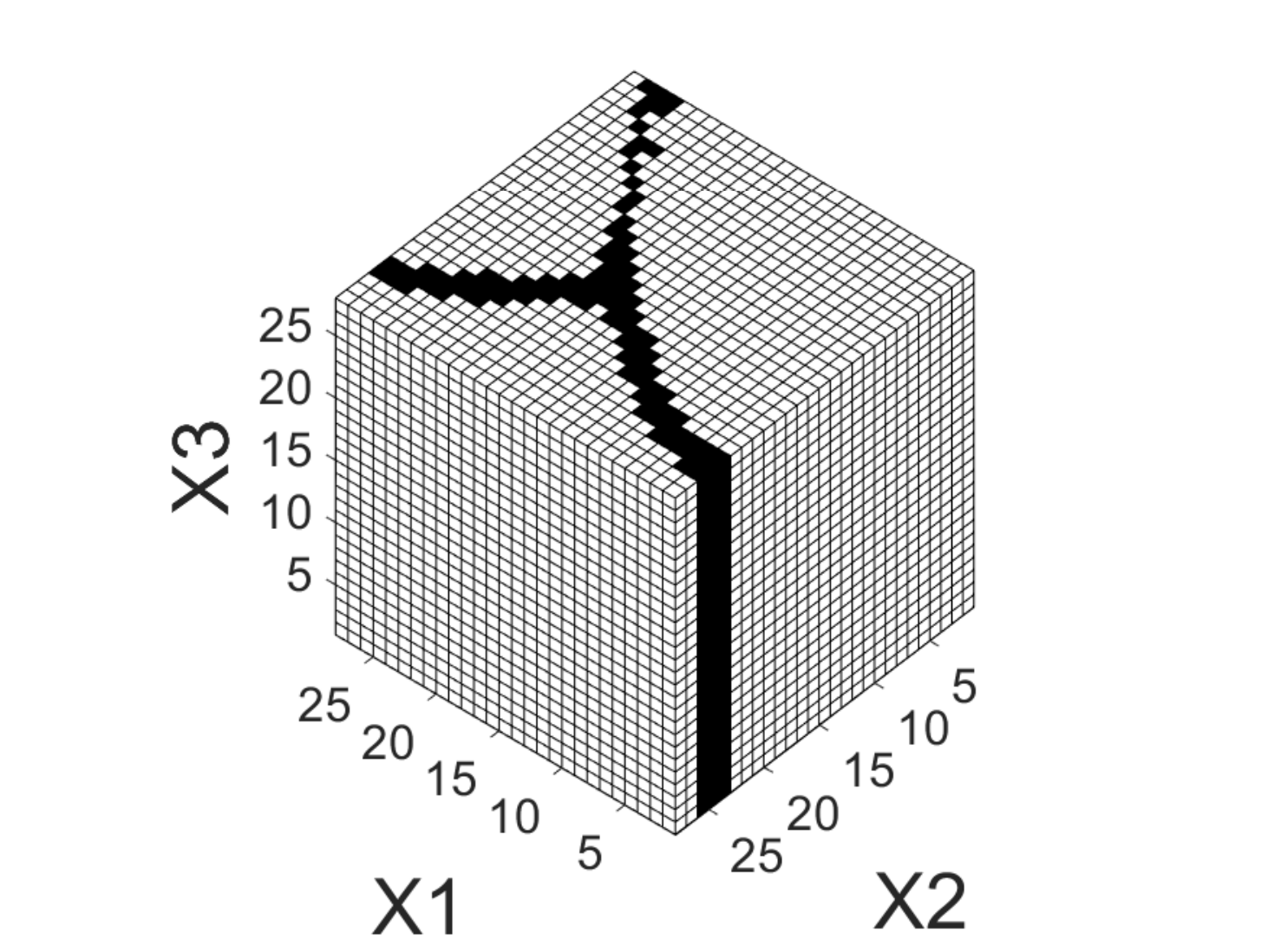} 
    \end{tabular}
  \end{center}
  \caption{Left: an undeformed hexagonal crystal image $f(x)$ with triple junction grain boundaries. Middle: $\text{mass}(x)$ of $f(x)$. Right: identified grain boundaries by thresholding $\text{mass}(x)$.}
  \label{fig:tri1}
\end{figure}

\begin{figure}[ht!]
  \begin{center}
    \begin{tabular}{ccc}
      \includegraphics[width=1.8in]{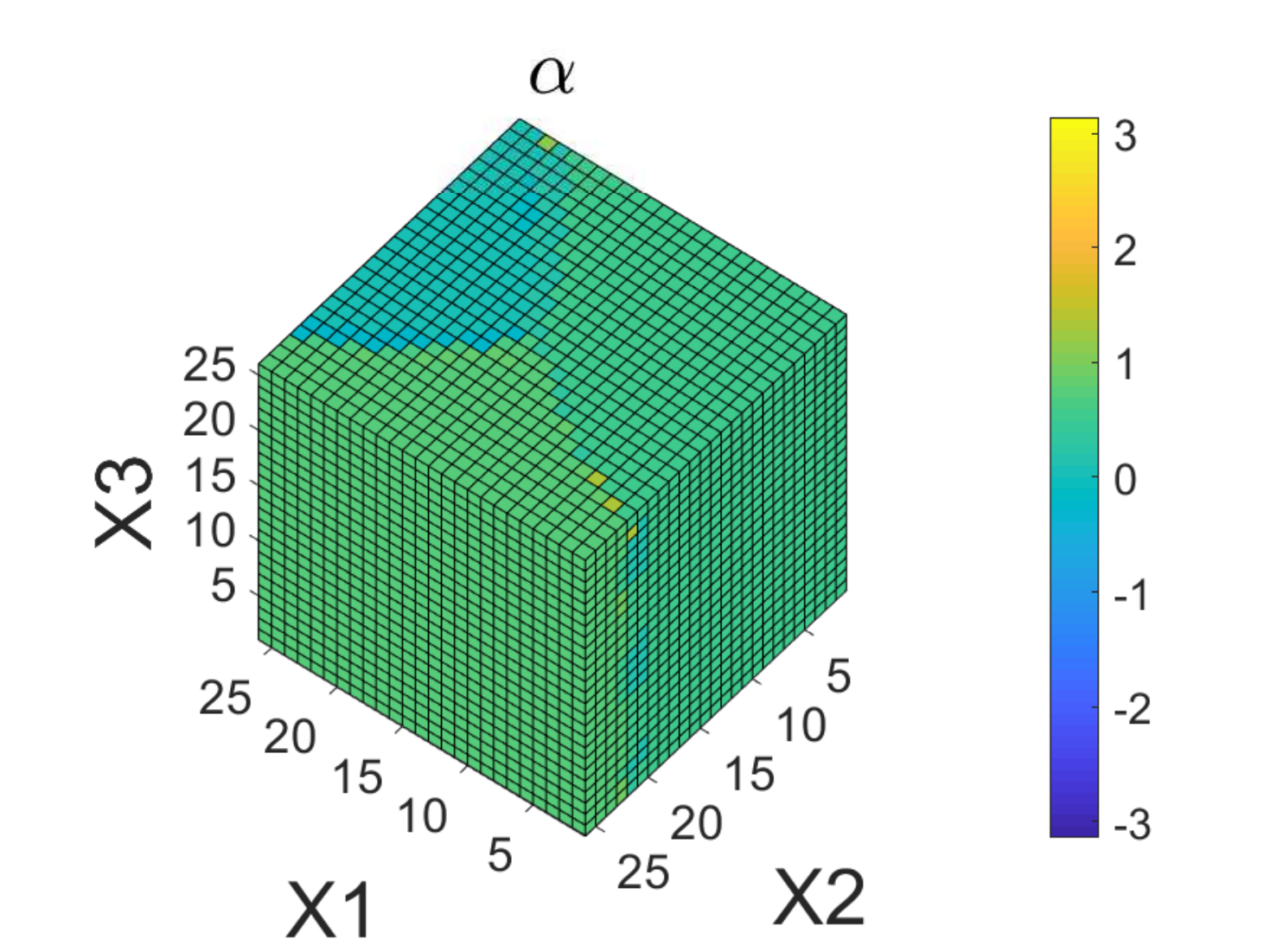} &
      \includegraphics[width=1.8in]{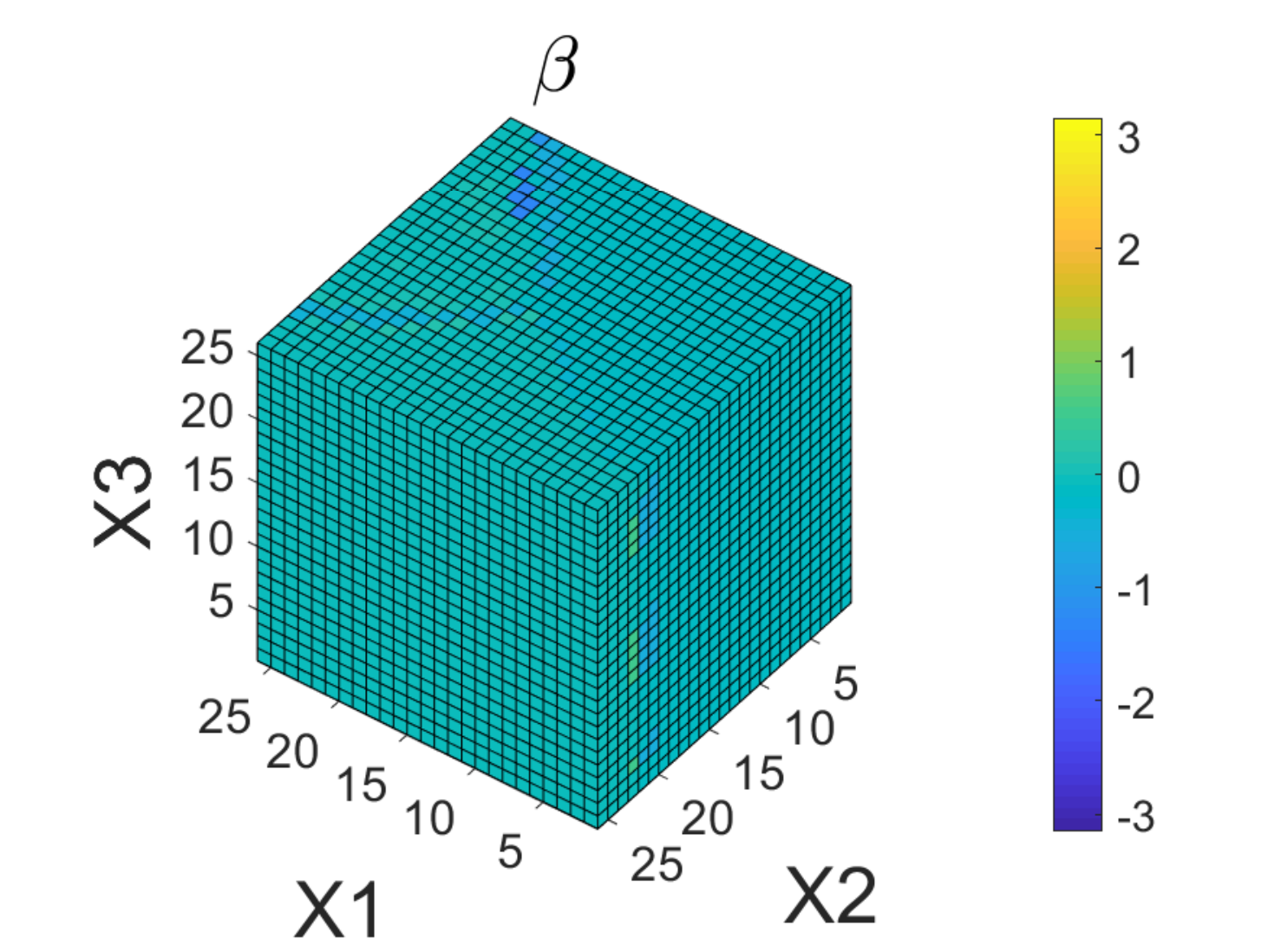} &
      \includegraphics[width=1.8in]{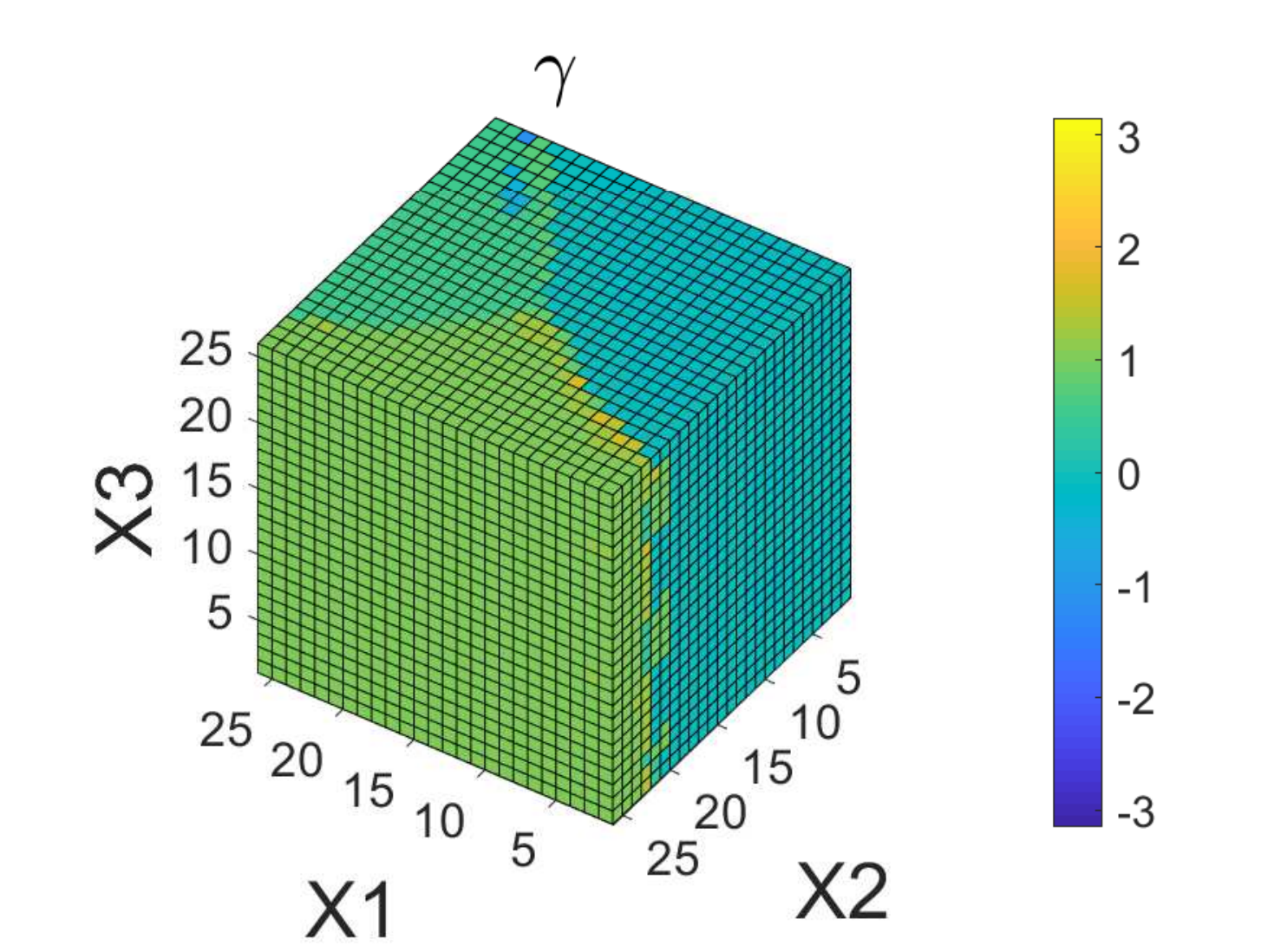} 
    \end{tabular}
  \end{center}
  \caption{Estimated Euler angles of the crystal orientation of the crystal image in Figure \ref{fig:tri1} (left). From left to right: $\alpha(x)$, $\beta(x)$, and $\gamma(x)$, respectively.}
  \label{fig:tri2}
\end{figure}

\begin{figure}[ht!]
  \begin{center}
    \begin{tabular}{ccc}
      \includegraphics[width=1.8in]{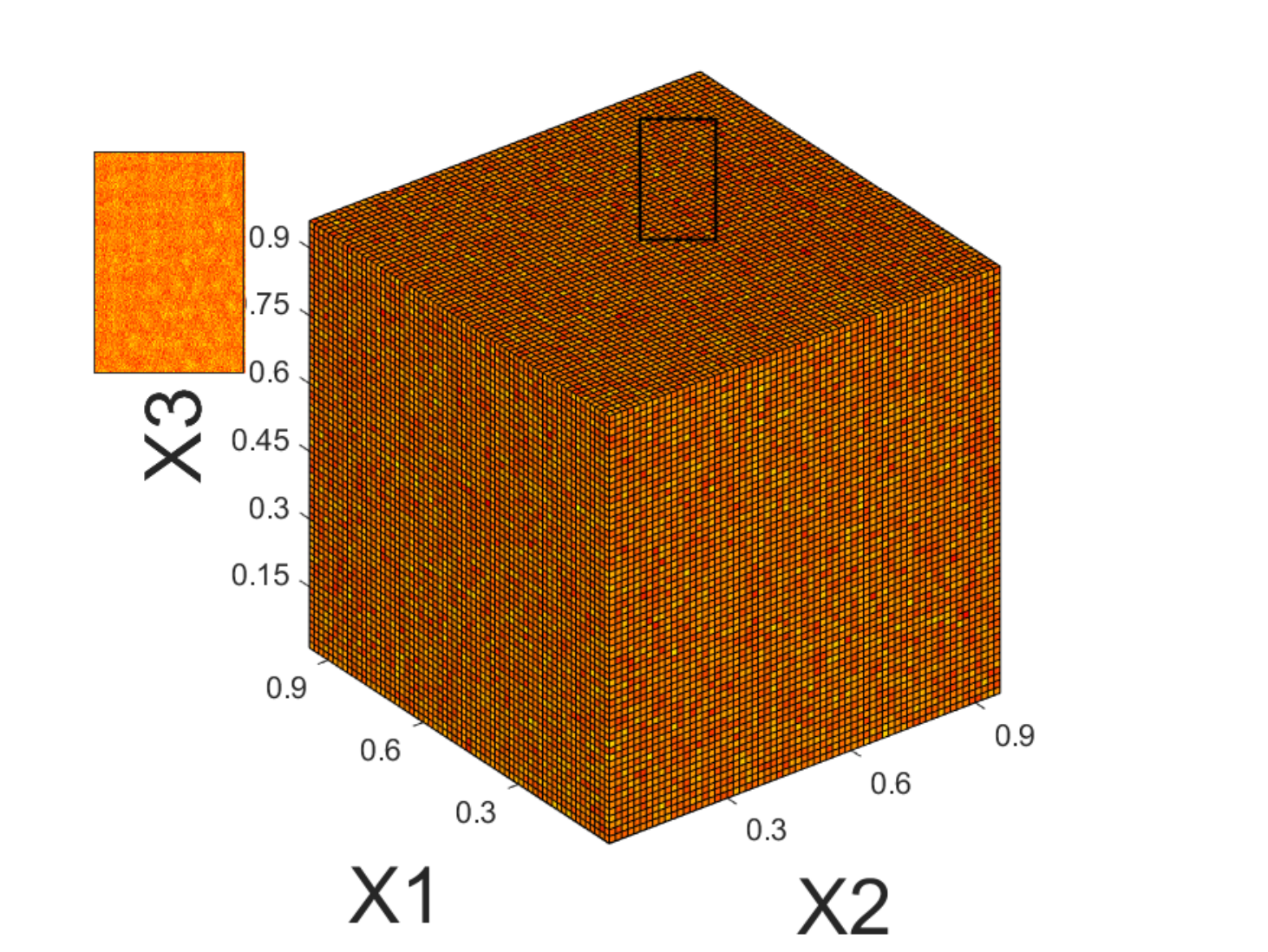} &
      \includegraphics[width=1.8in]{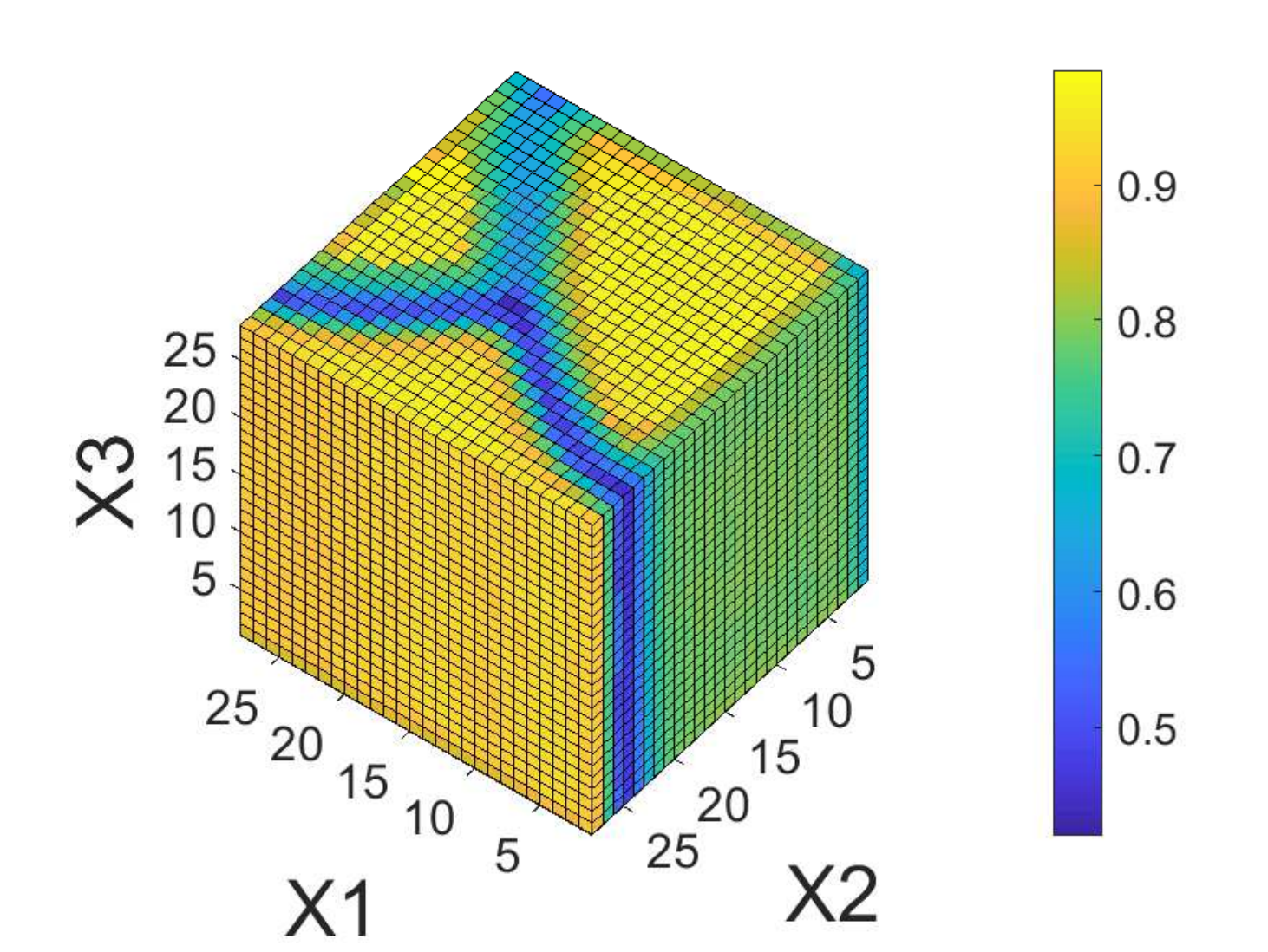} &
      \includegraphics[width=1.8in]{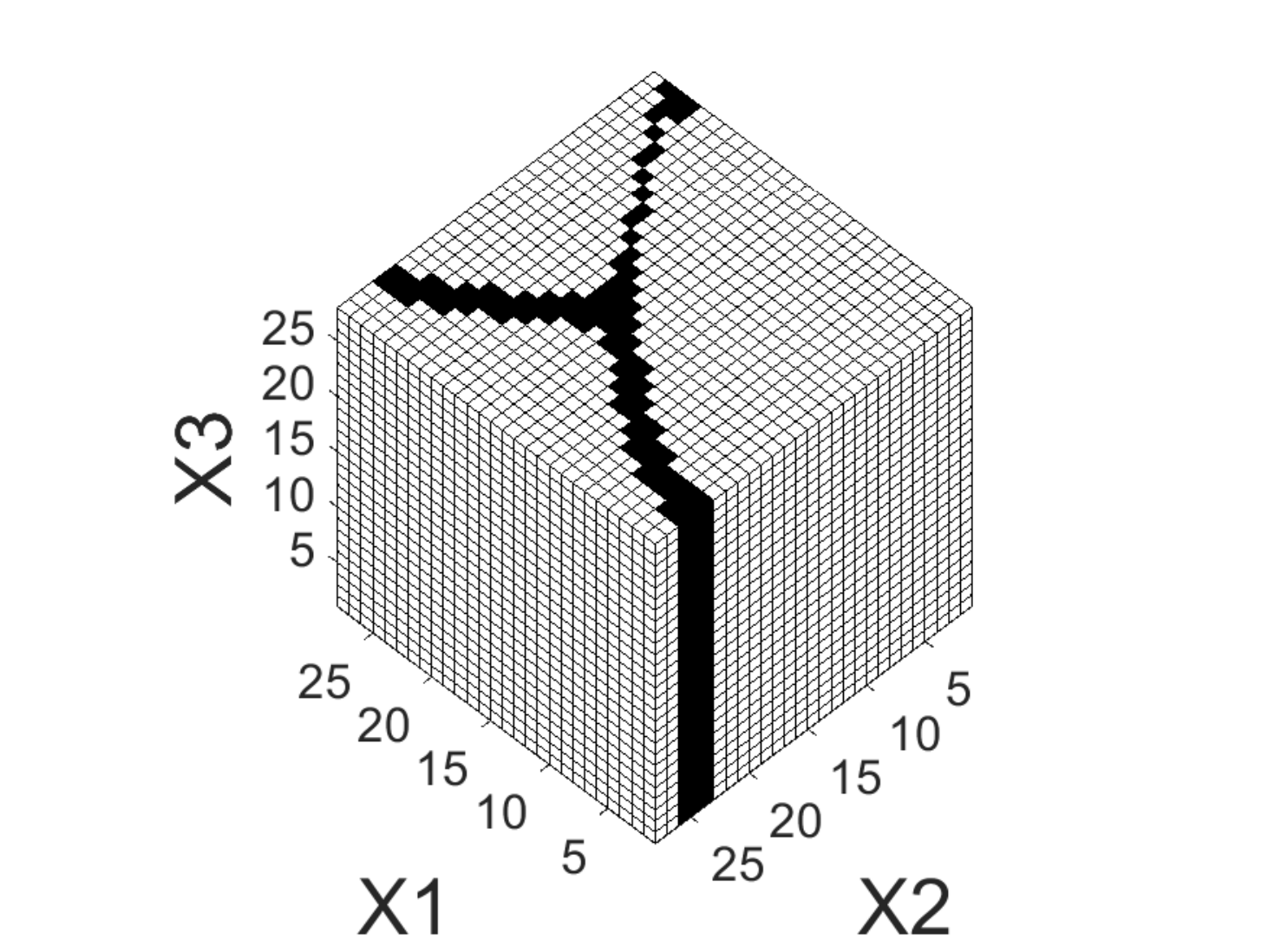} 
    \end{tabular}
  \end{center}
  \caption{Left: an undeformed hexagonal crystal image $f(x)$ with triple junction grain boundaries. Mean-zero Gaussian random noise $ns(x)$ with variance $1$ is added to the crystal image. Middle: $\text{mass}(x)$ of $f(x)+ns(x)$. Right: identified grain boundaries by thresholding $\text{mass}(x)$.}
  \label{fig:tri3}
\end{figure}

\begin{figure}[ht!]
  \begin{center}
    \begin{tabular}{ccc}
      \includegraphics[width=1.8in]{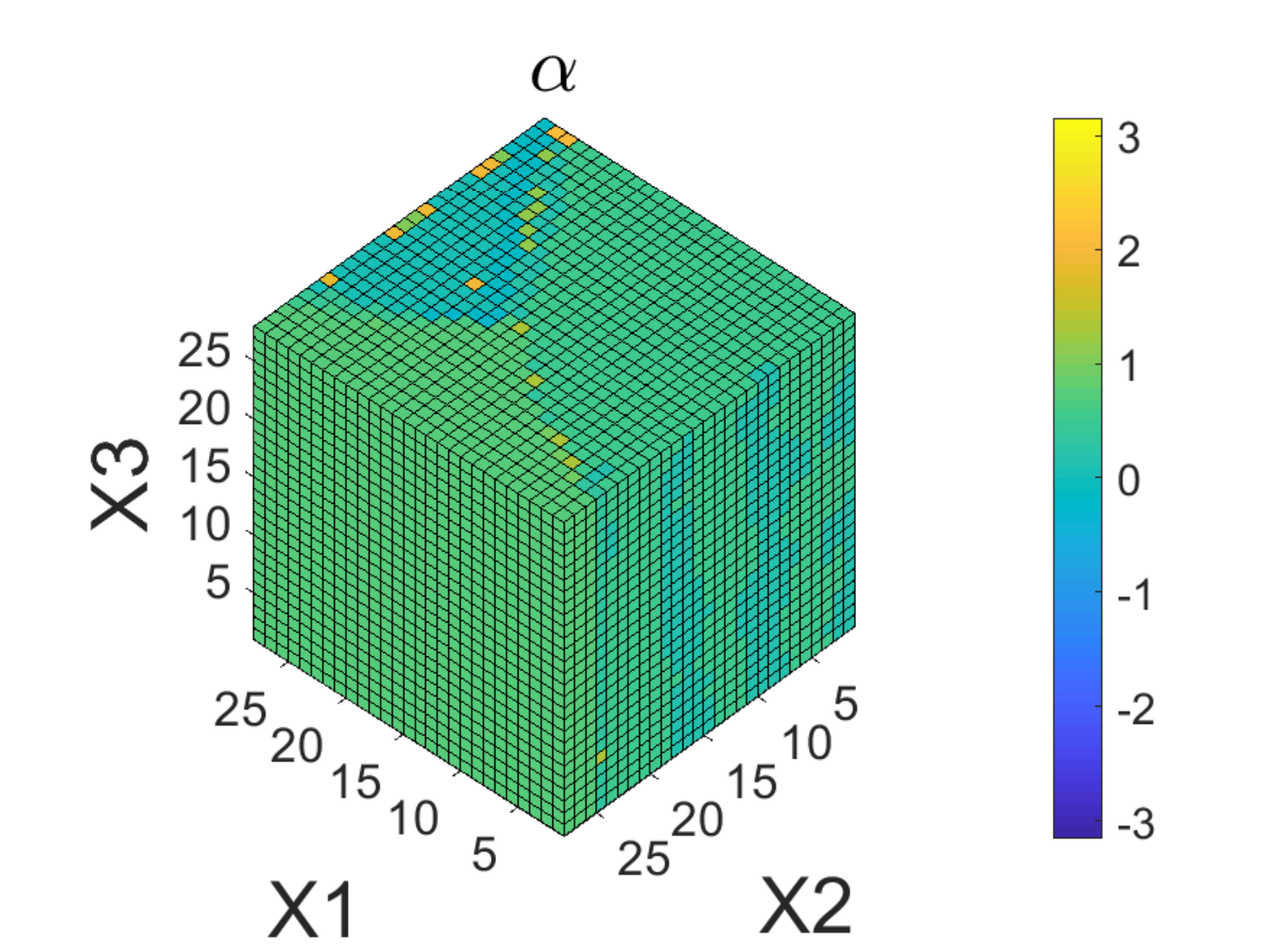} &
      \includegraphics[width=1.8in]{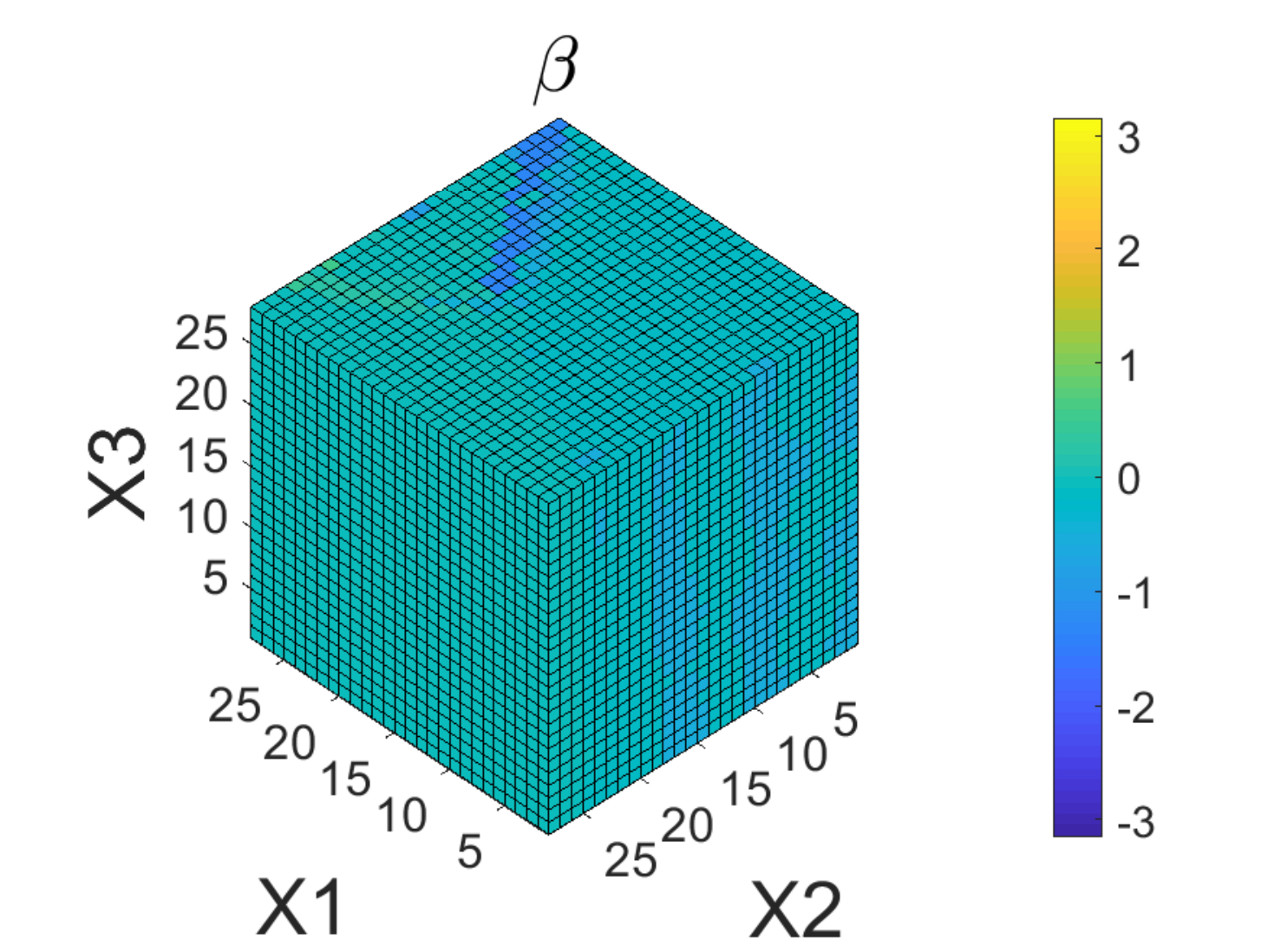} &
      \includegraphics[width=1.8in]{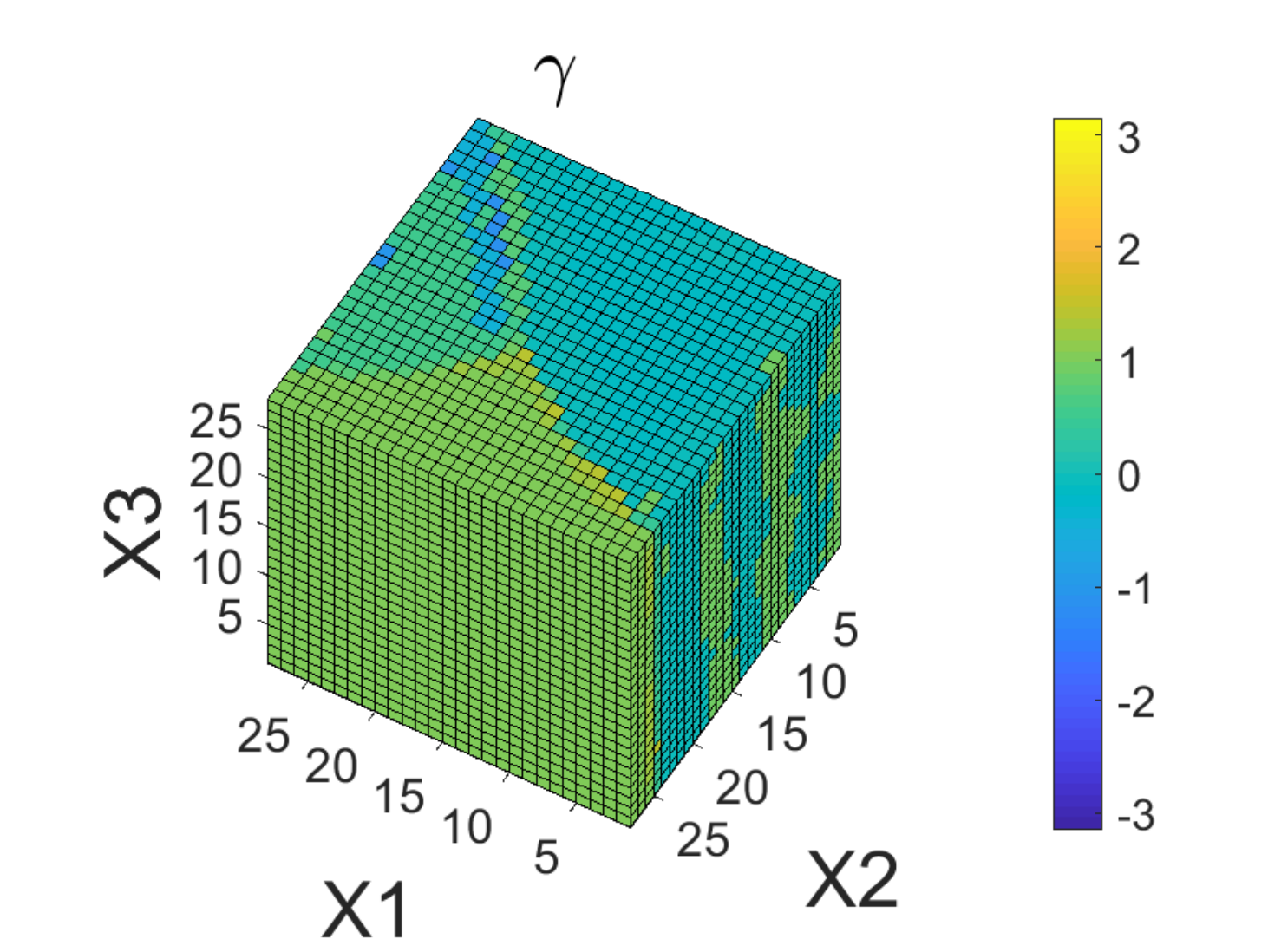} 
    \end{tabular}
  \end{center}
  \caption{Estimated Euler angles of the crystal orientation of the crystal image in Figure \ref{fig:tri3} (left). From left to right: $\alpha(x)$, $\beta(x)$, and $\gamma(x)$, respectively.}
  \label{fig:tri4}
\end{figure}

\subsubsection{Isolated defects}

Figure \ref{fig:is1} (left) shows an example of two isolated defects: one is a vacancy and the other one is a dislocation. As shown in Figure \ref{fig:is1} (middle), $\text{mass}(x)$ clearly visualizes the defect locations and after thresholding it gives the location of the defects in Figure \ref{fig:is1} (right). Figure \ref{fig:is2} shows the estimated Euler angles of the crystal orientation of the crystal image in Figure \ref{fig:is1} (left) (from left to right: $\alpha(x)$, $\beta(x)$, and $\gamma(x)$, respectively). A noisy version of the example of isolated defects and its analysis results are given in Figure \ref{fig:is3} and \ref{fig:is4}. The results in Figure \ref{fig:is3} and \ref{fig:is4} are comparable to those in Figure \ref{fig:is1} and \ref{fig:is2}, even though the atom structure is not clear with heavy noise.

\begin{figure}[ht!]
  \begin{center}
    \begin{tabular}{ccc}
      \includegraphics[width=1.6in]{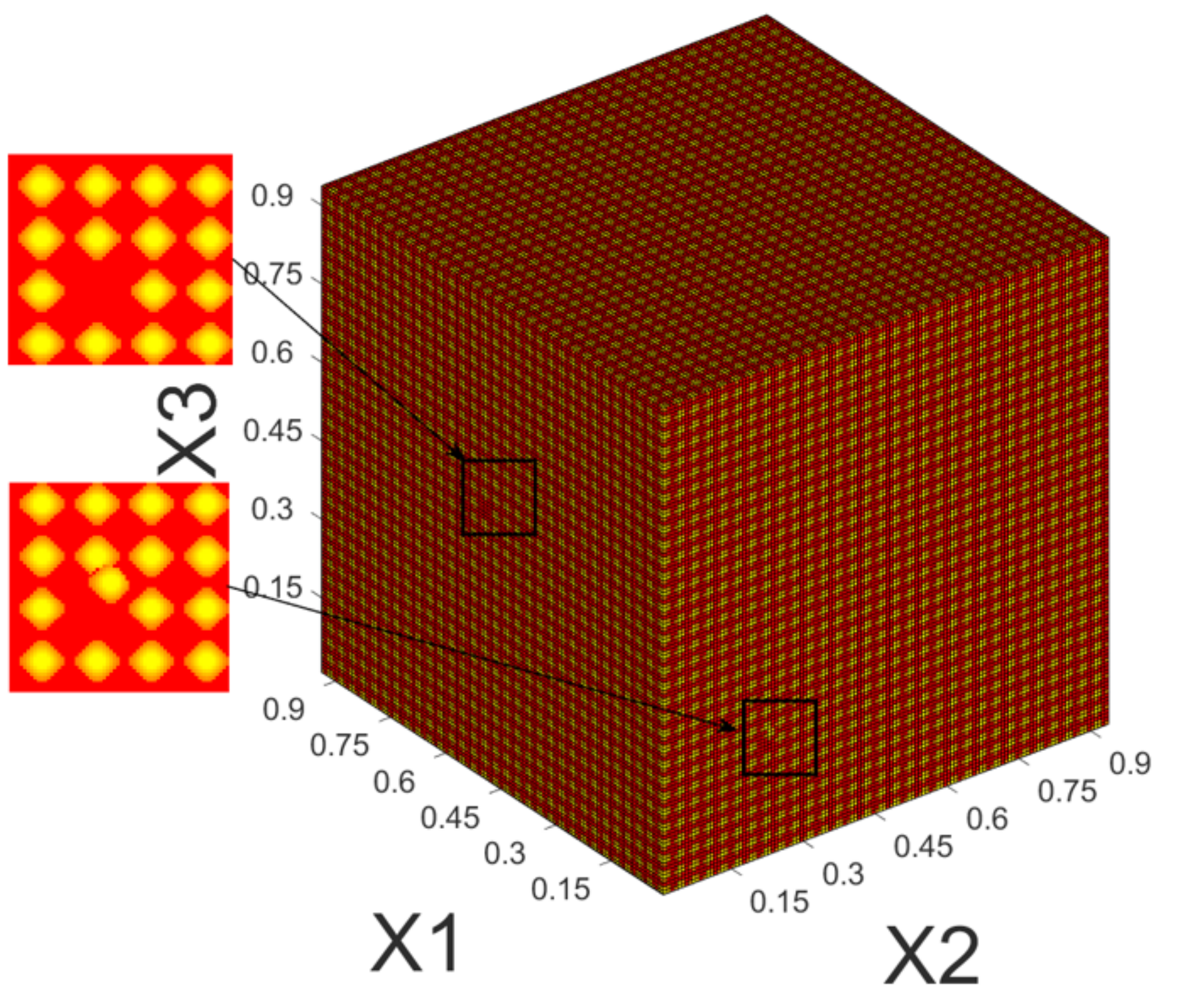} &
      \includegraphics[width=1.8in]{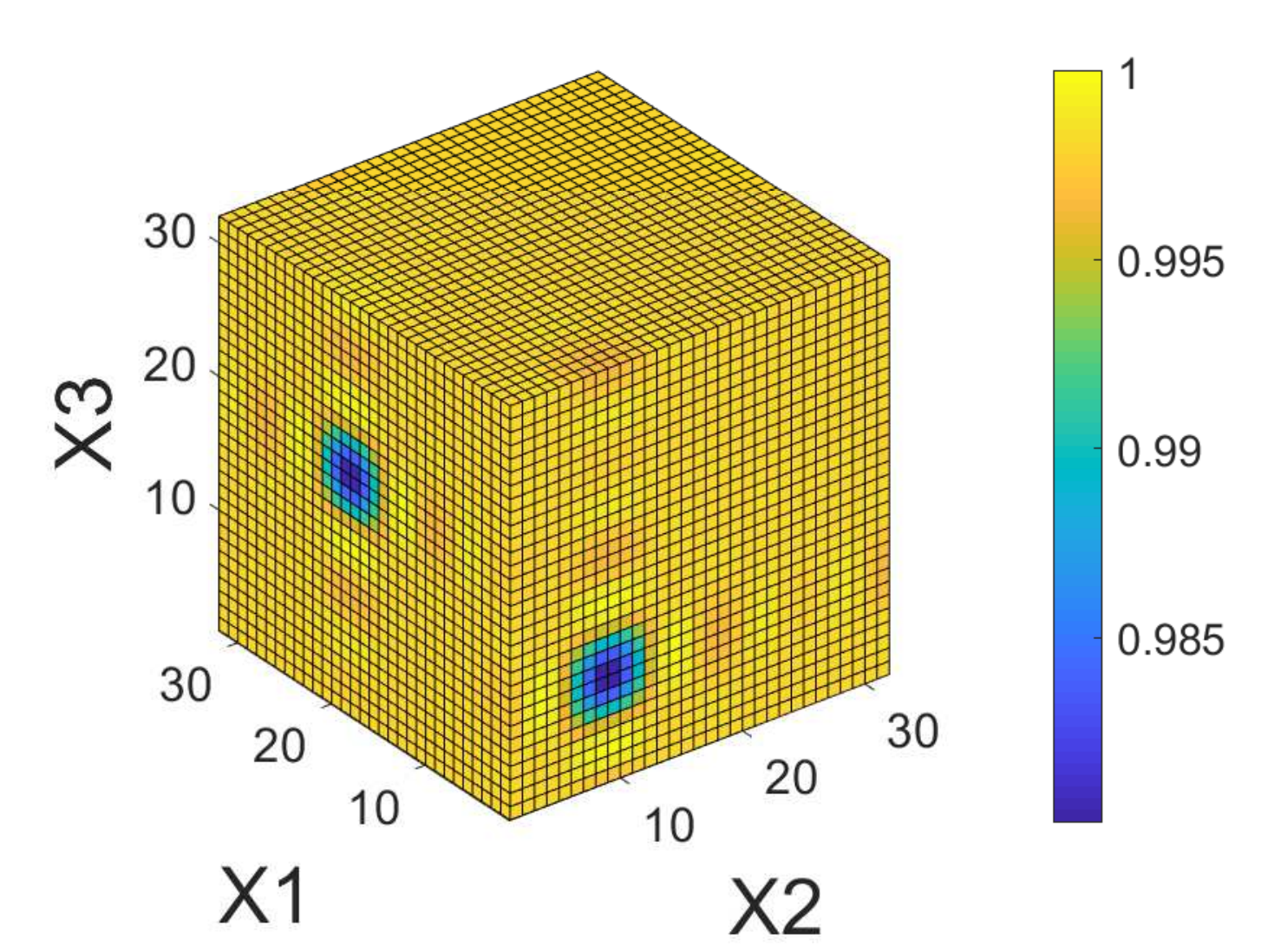} &
      \includegraphics[width=1.8in]{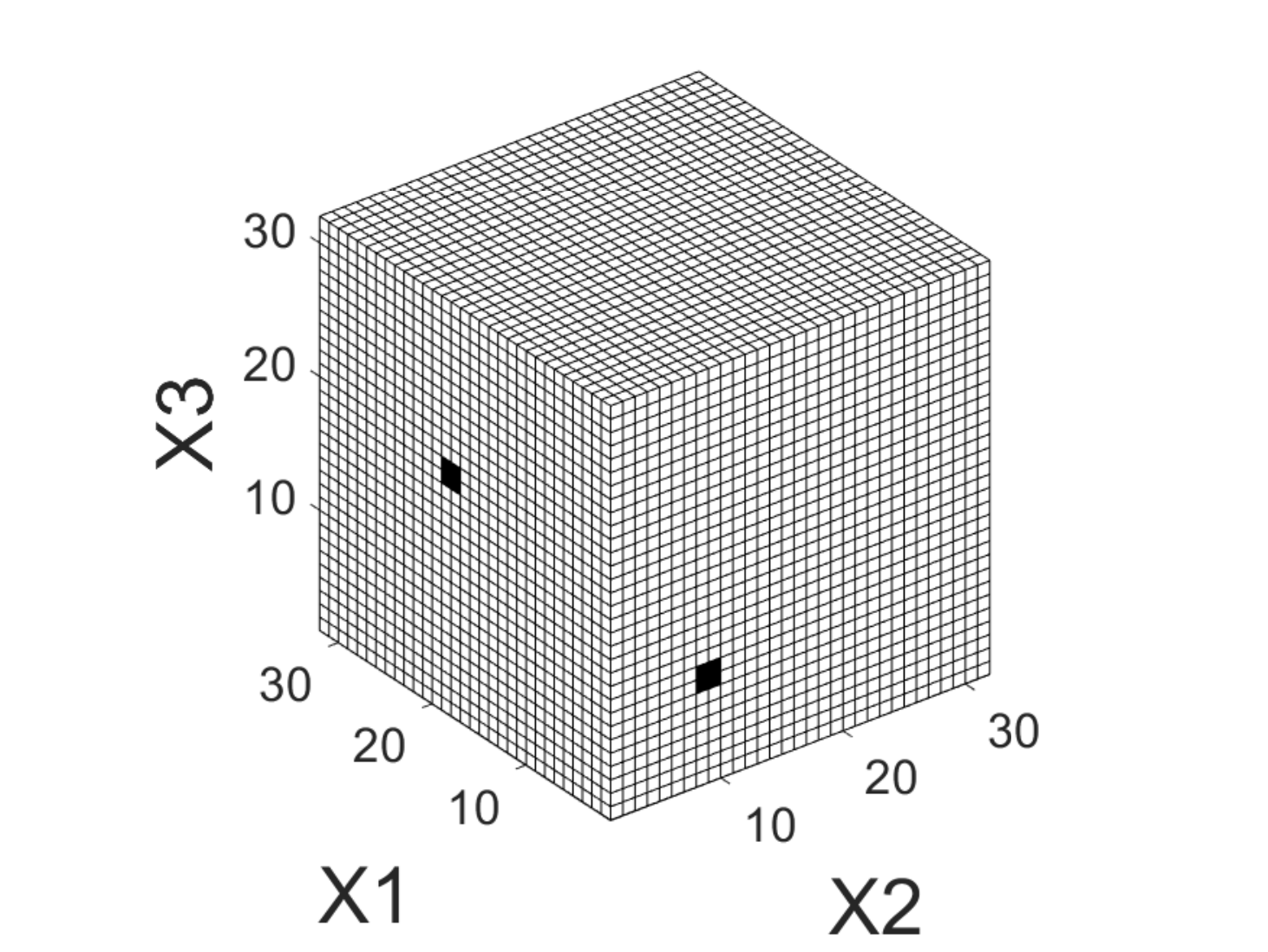} 
    \end{tabular}
  \end{center}
  \caption{Left: an undeformed cubic crystal image $f(x)$ with two isolated defects. Middle: $\text{mass}(x)$ of $f(x)$. Right: identified grain boundaries by thresholding $\text{mass}(x)$.}
  \label{fig:is1}
\end{figure}

\begin{figure}[ht!]
  \begin{center}
    \begin{tabular}{ccc}
      \includegraphics[width=1.8in]{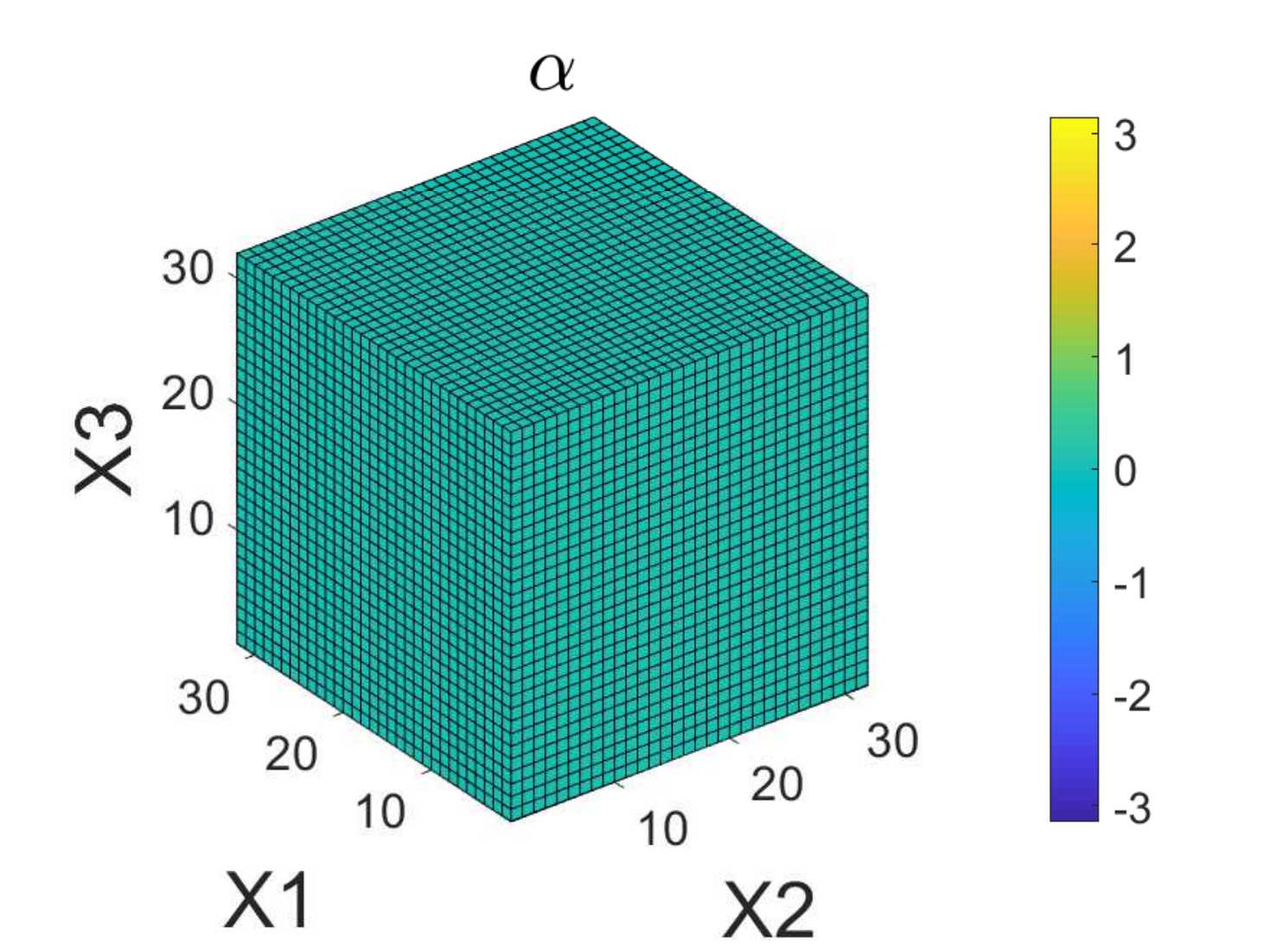} &
      \includegraphics[width=1.8in]{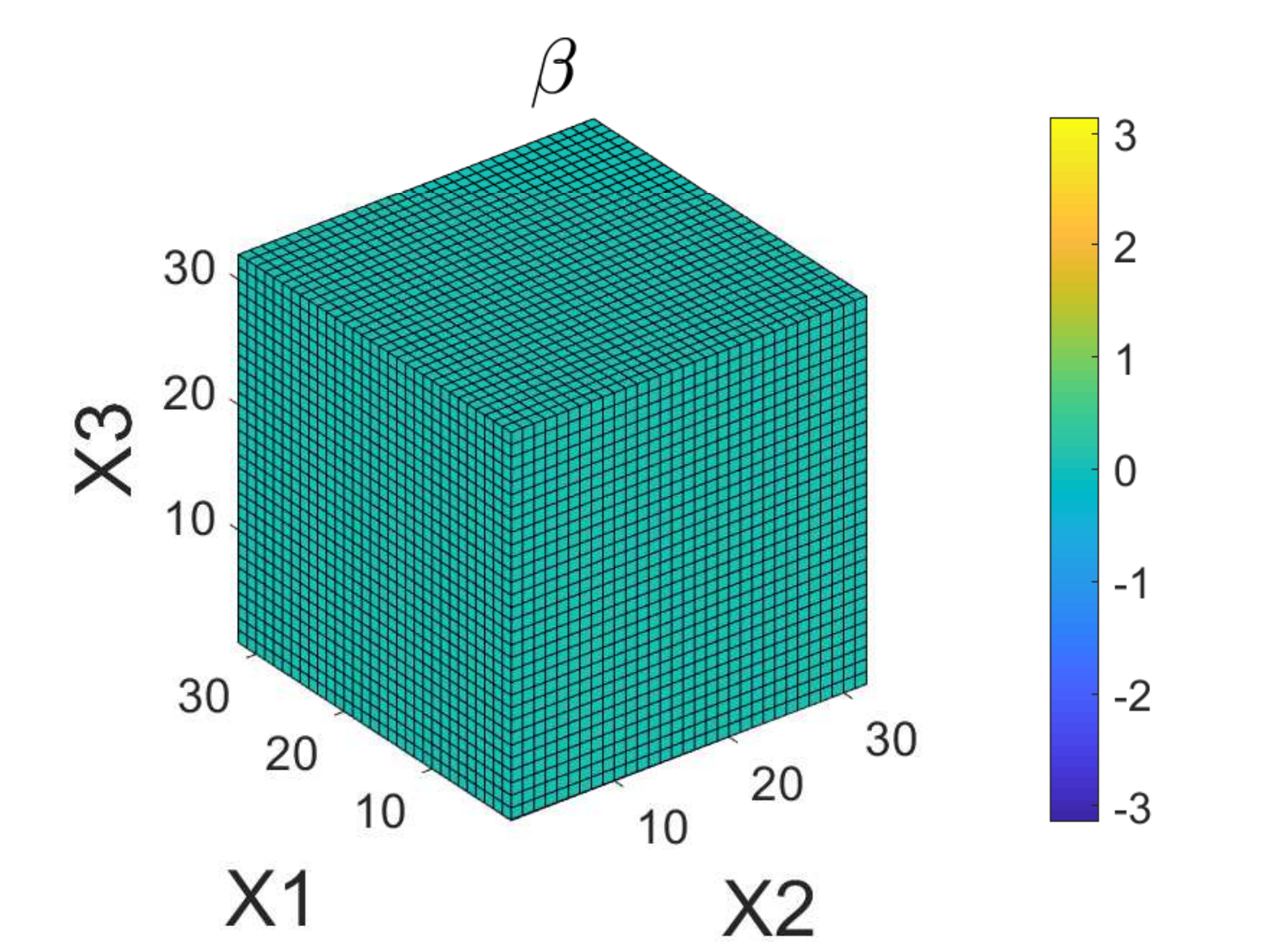} &
      \includegraphics[width=1.8in]{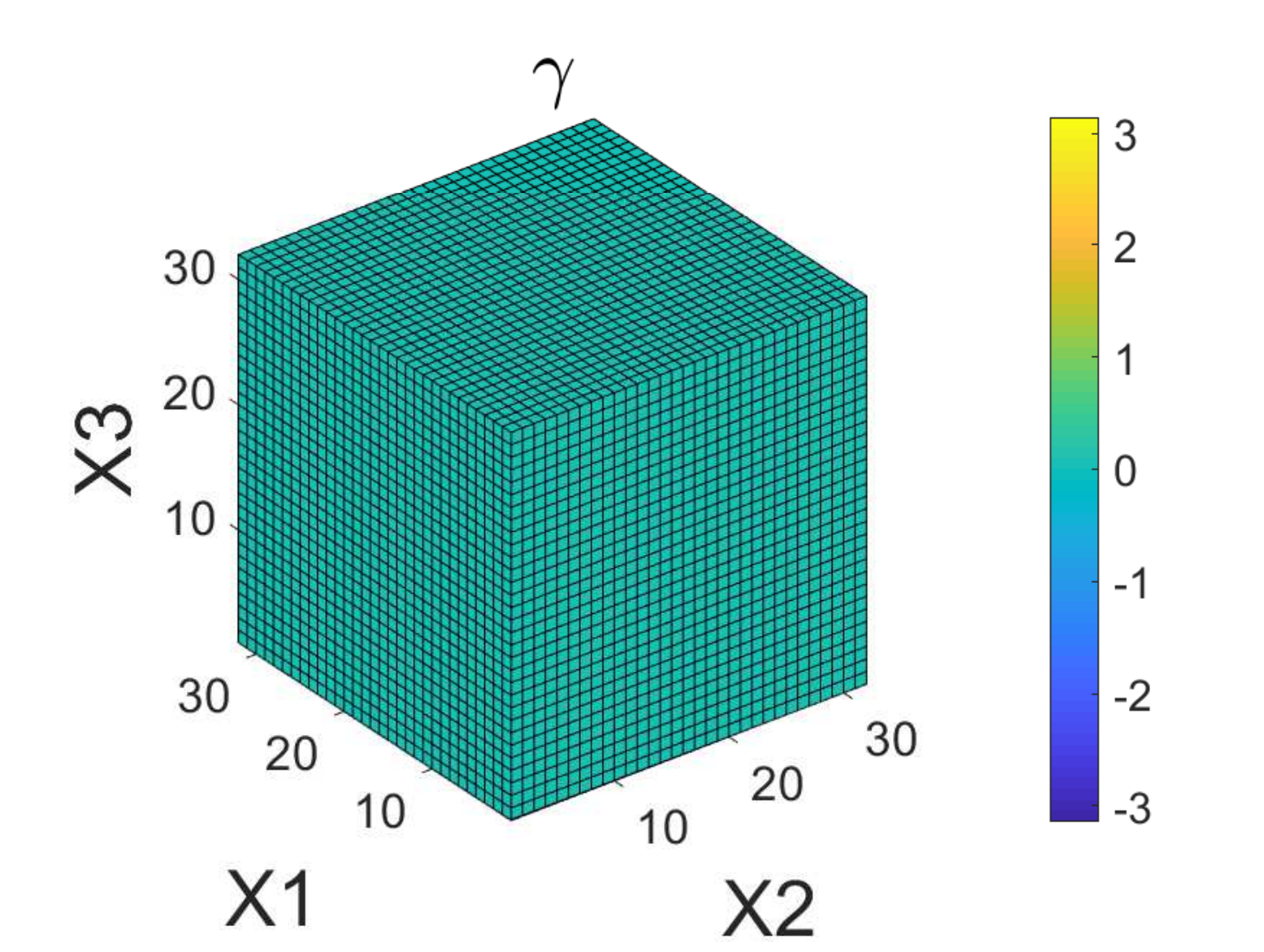} 
    \end{tabular}
  \end{center}
  \caption{Estimated Euler angles of the crystal orientation of the crystal image in Figure \ref{fig:is1} (left). From left to right: $\alpha(x)$, $\beta(x)$, and $\gamma(x)$, respectively.}
  \label{fig:is2}
\end{figure}
\begin{figure}[ht!]
  \begin{center}
    \begin{tabular}{ccc}
      \includegraphics[width=1.6in]{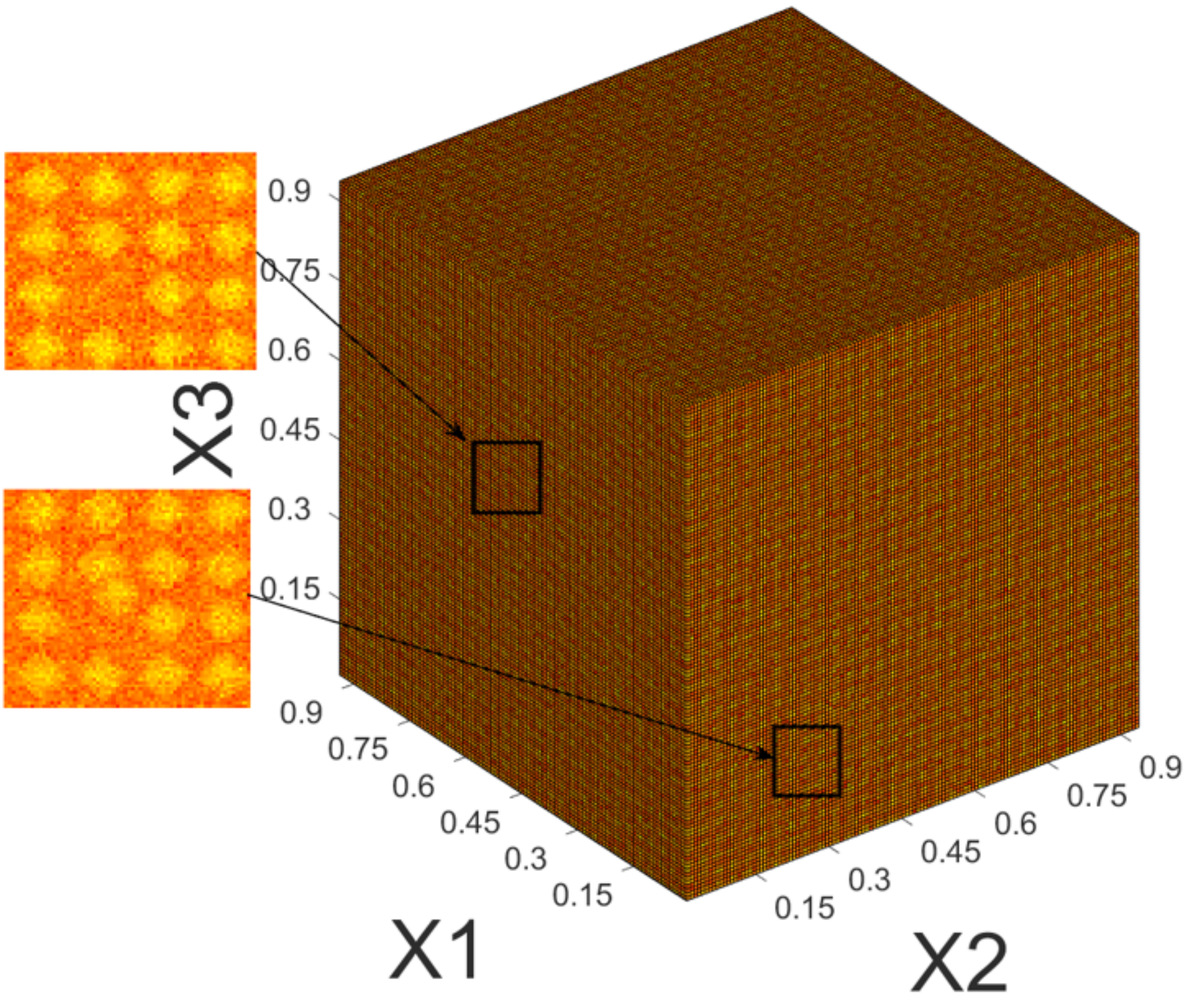} &
      \includegraphics[width=1.8in]{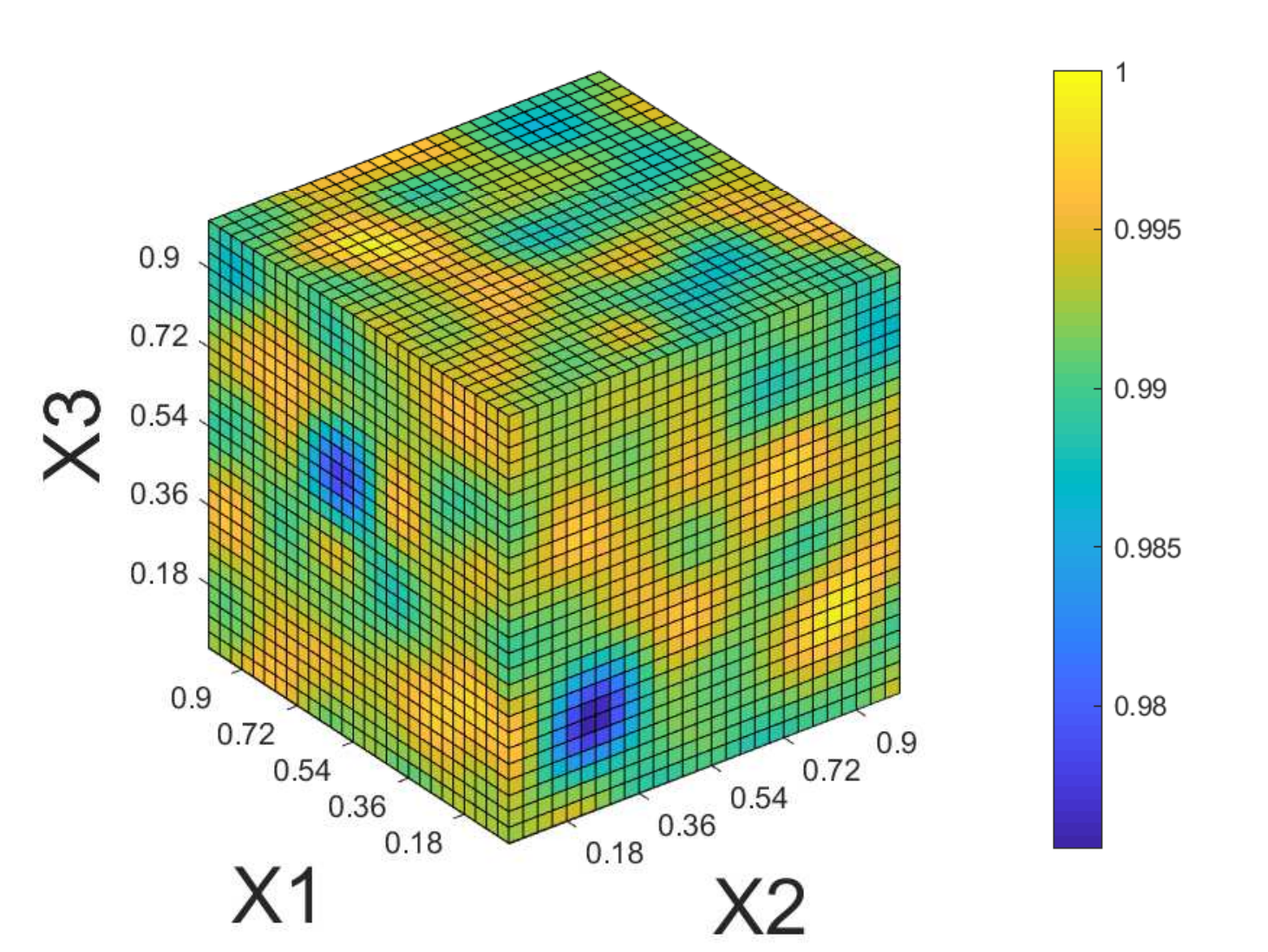} &
      \includegraphics[width=1.8in]{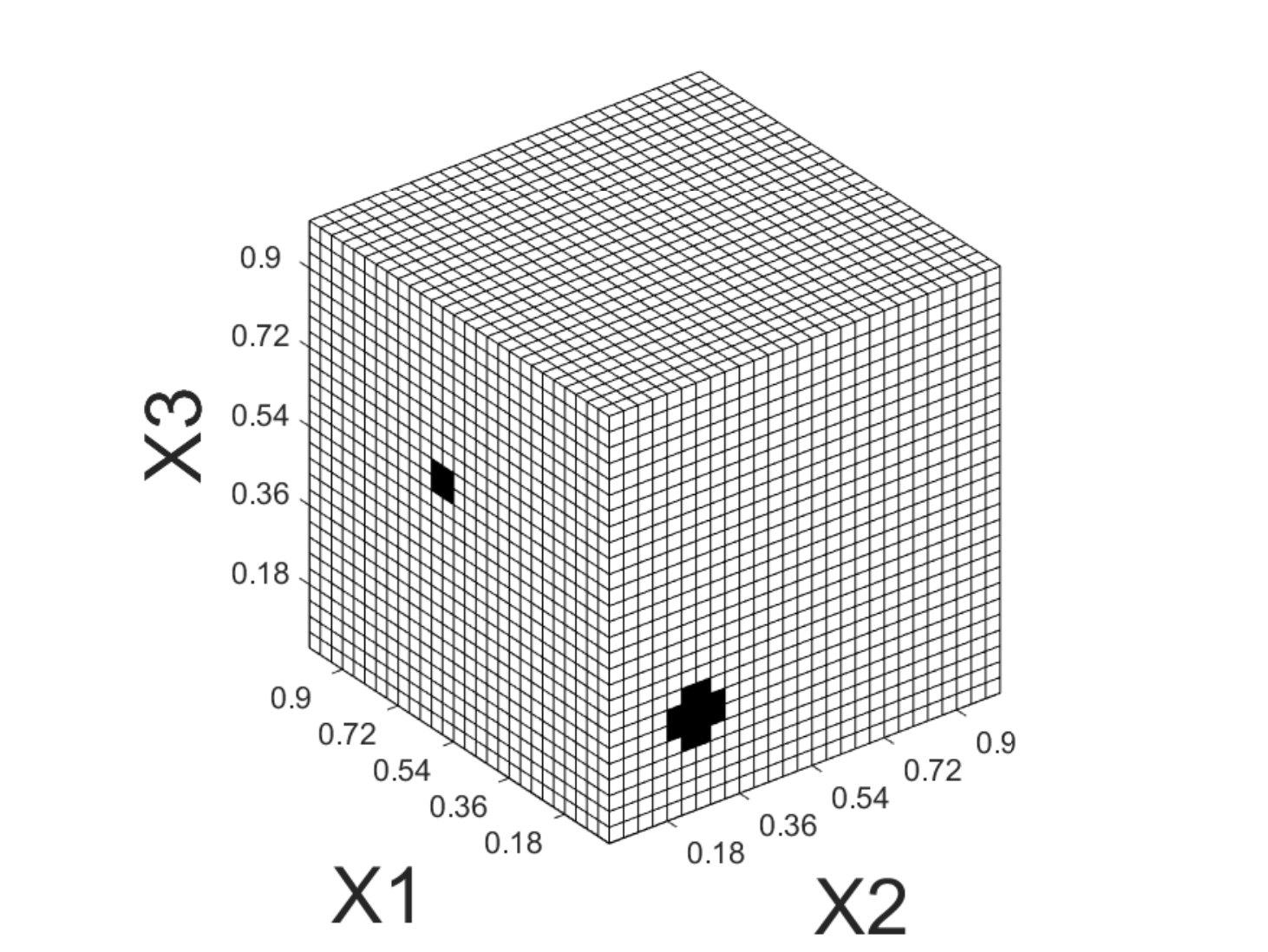} 
    \end{tabular}
  \end{center}
  \caption{Left: an undeformed cubic crystal image $f(x)$ with two isolated defects and additive Gaussian random noise $ns(x)$ with mean zero and variance $0.3$. Middle: $\text{mass}(x)$ of $f(x)+ns(x)$. Right: identified grain boundaries by thresholding $\text{mass}(x)$.}
  \label{fig:is3}
\end{figure}

\begin{figure}[ht!]
  \begin{center}
    \begin{tabular}{ccc}
      \includegraphics[width=1.8in]{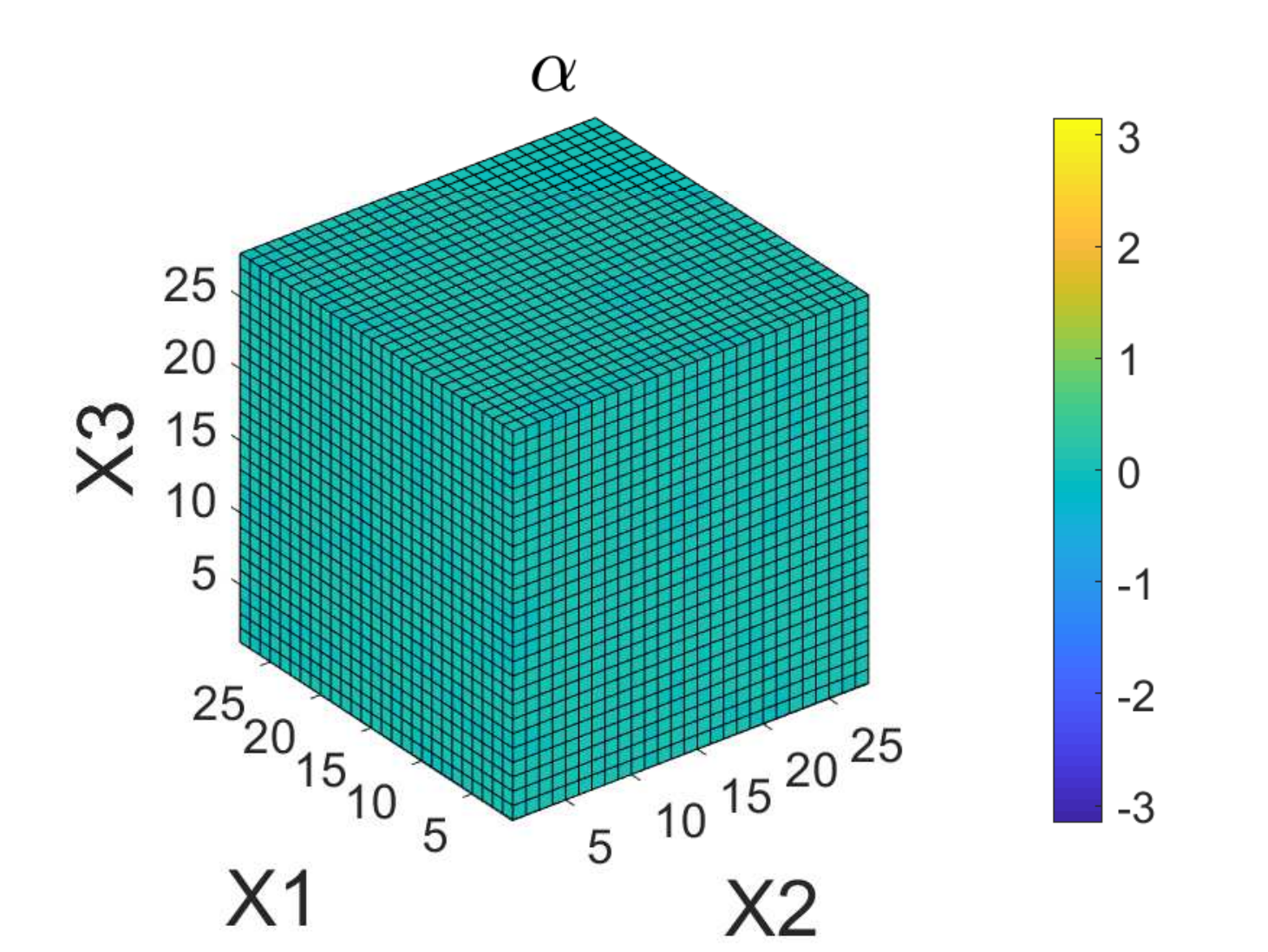} &
      \includegraphics[width=1.8in]{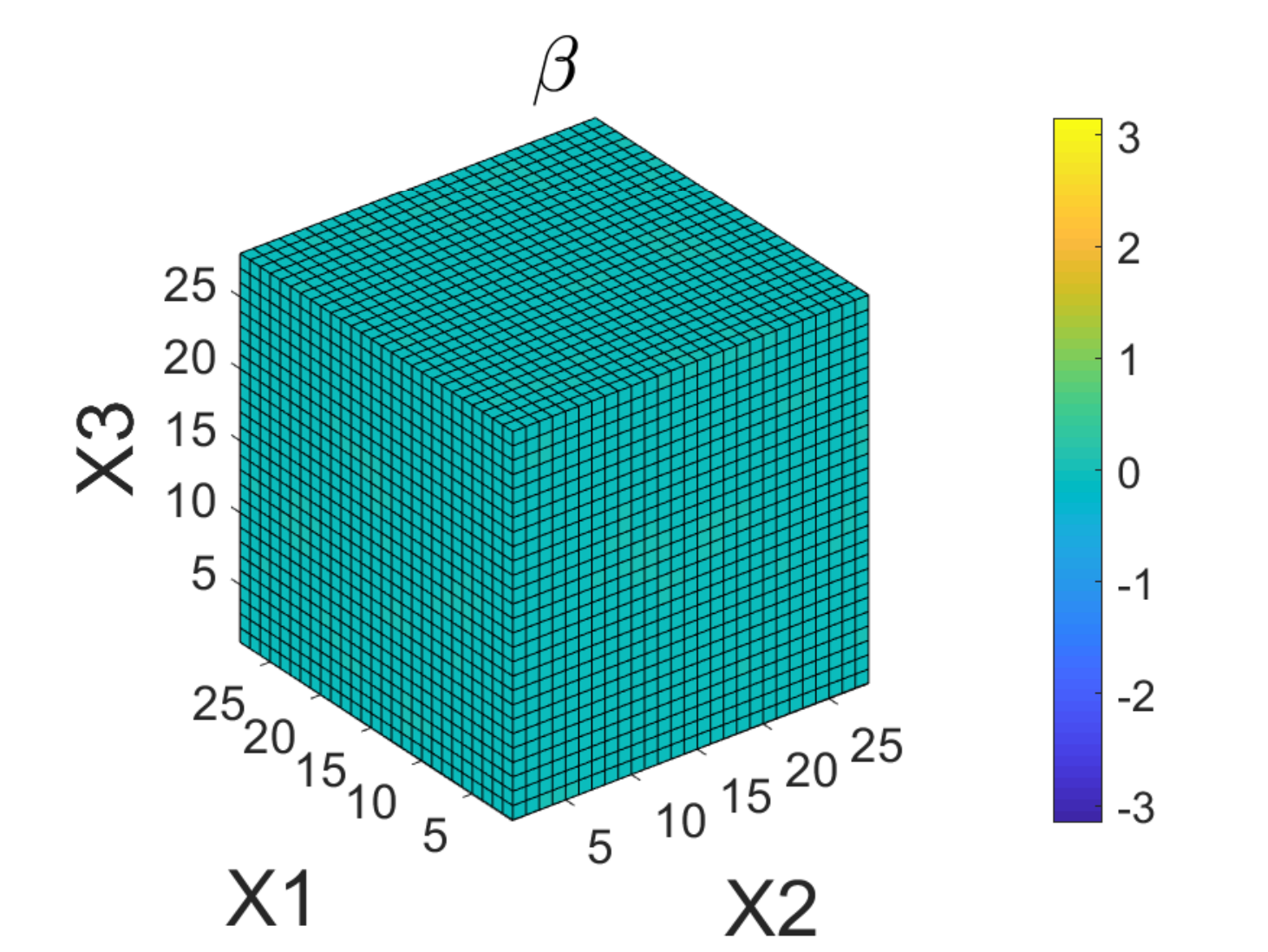} &
      \includegraphics[width=1.8in]{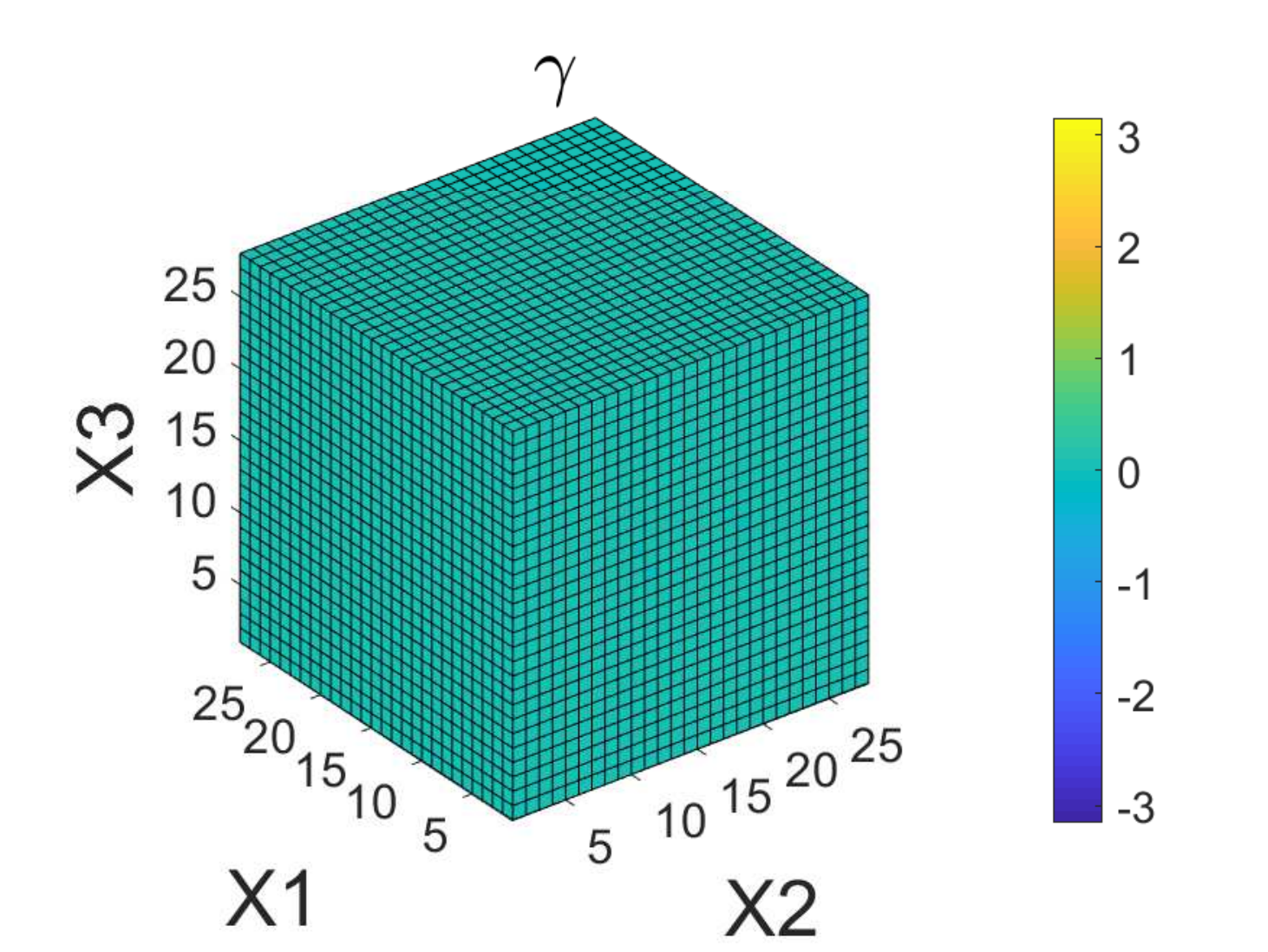} 
    \end{tabular}
  \end{center}
  \caption{Estimated Euler angles of the crystal orientation of the crystal image in Figure \ref{fig:is3} (left). From left to right: $\alpha(x)$, $\beta(x)$, and $\gamma(x)$, respectively.}
  \label{fig:is4}
\end{figure}

\subsection{Real atomic resolution crystal images}
Self-assembly plays a pivotal role in biologically controlled synthesis and in fabricating advanced engineering materials \cite{Na1,Na2}. For example, self-assembly of colloidal particles admits versatile fabrication of highly ordered $2D$ and $3D$ structures for various applications, e.g., photonics, sensing, catalysis, etc. \cite{Co1,Co2}. Data analysis of crystal images from self-assembly improves the mechanistic understanding of the self-assembly processes for accurate control in lattice types, crystallography, and defects, which could further facilitate the performance and functionality of fabricated structures.

Here we present an example of $3D$ images of colloidal particles. Figure \ref{fig:rd1} shows a real example and its estimated Euler angle $\gamma(x)$. To better visualize the results, we cut the $3D$ data into $2D$ slices and show one slice per eight pixels. The results in the estimated $\gamma(x)$ is able to reflect the crystal orientation in the $x_1$-$x_3$ plane. For example, the left-bottom panel of Figure \ref{fig:rd1} shows the zoomed-in image (top) and its corresponding $\gamma(x)$ (bottom), highlighting two examples (boxed regions) of noticeable fine scale variation of the crystal orientation in $\gamma(x)$, readily recognizable
also by visual inspection of the corresponding zones in the crystal image. The crystal images in these boxed regions squeeze slightly in the upper part, resulting in slightly different orientations in $\gamma(x)$.

\begin{figure}[ht!]
  \begin{center}
    \begin{tabular}{cc}
    \includegraphics[width=1.6in]{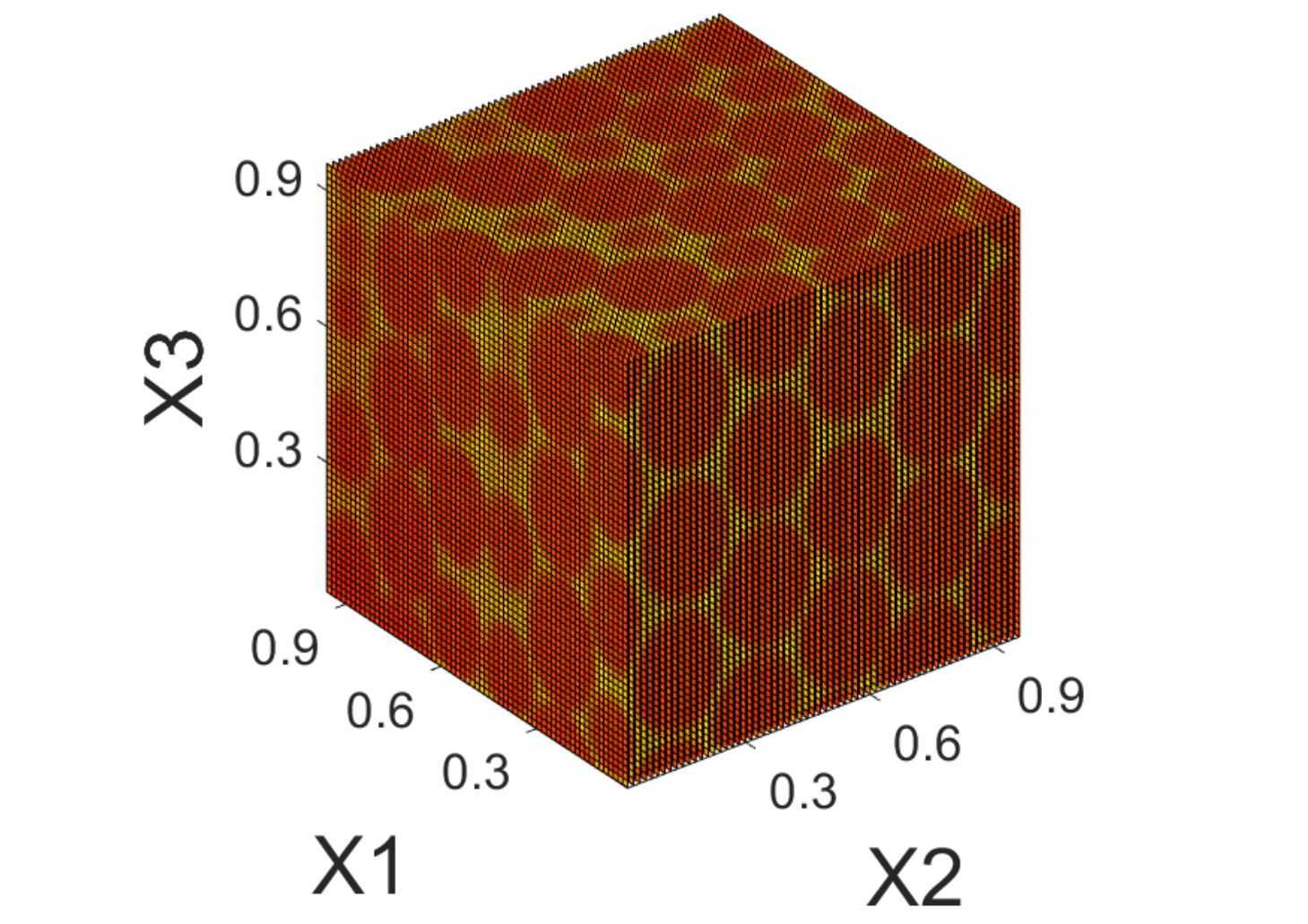}   &  \includegraphics[width=3.2in]{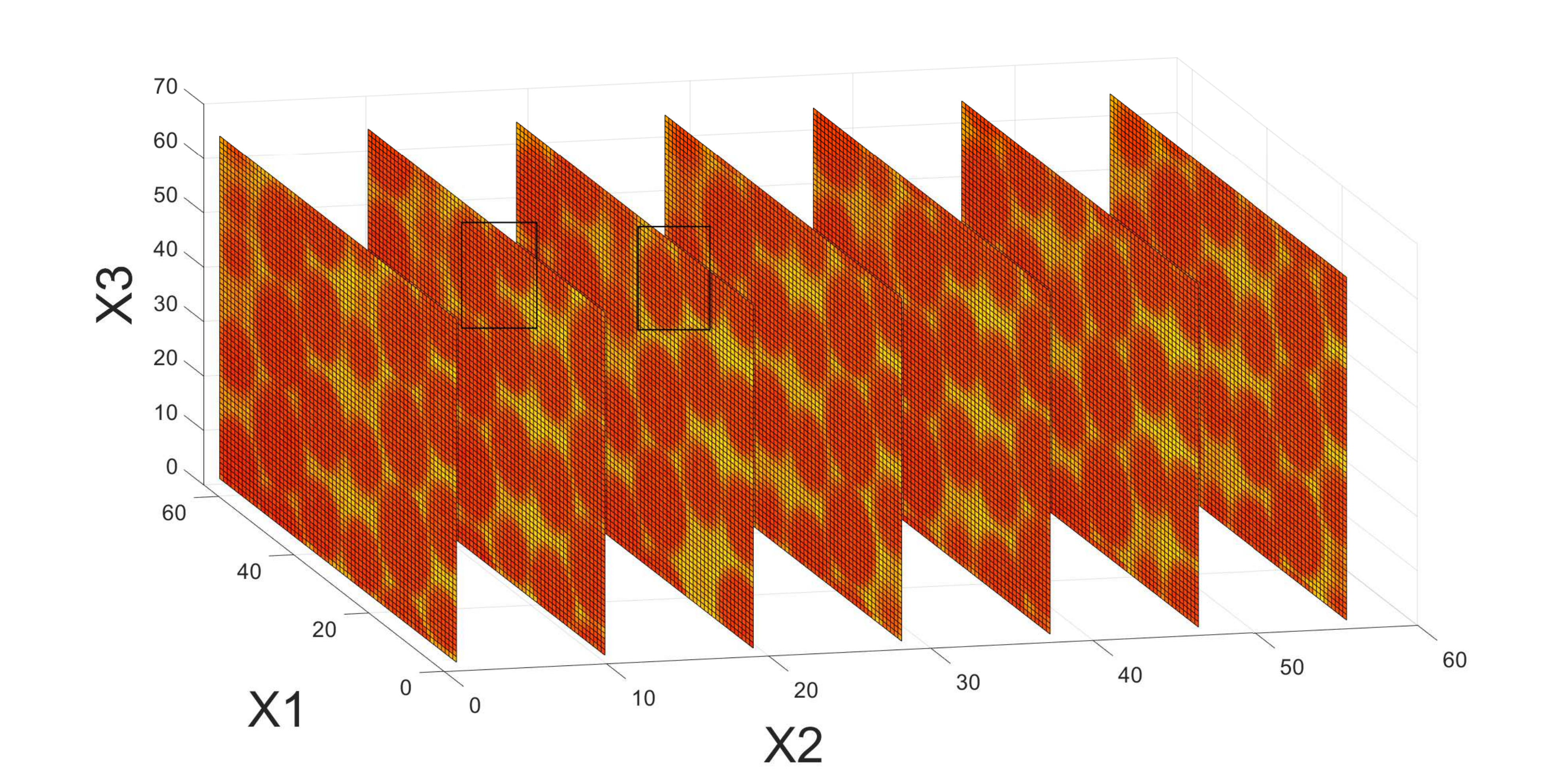}    \\
    (a) & (b)\\
 \includegraphics[width=1.4in]{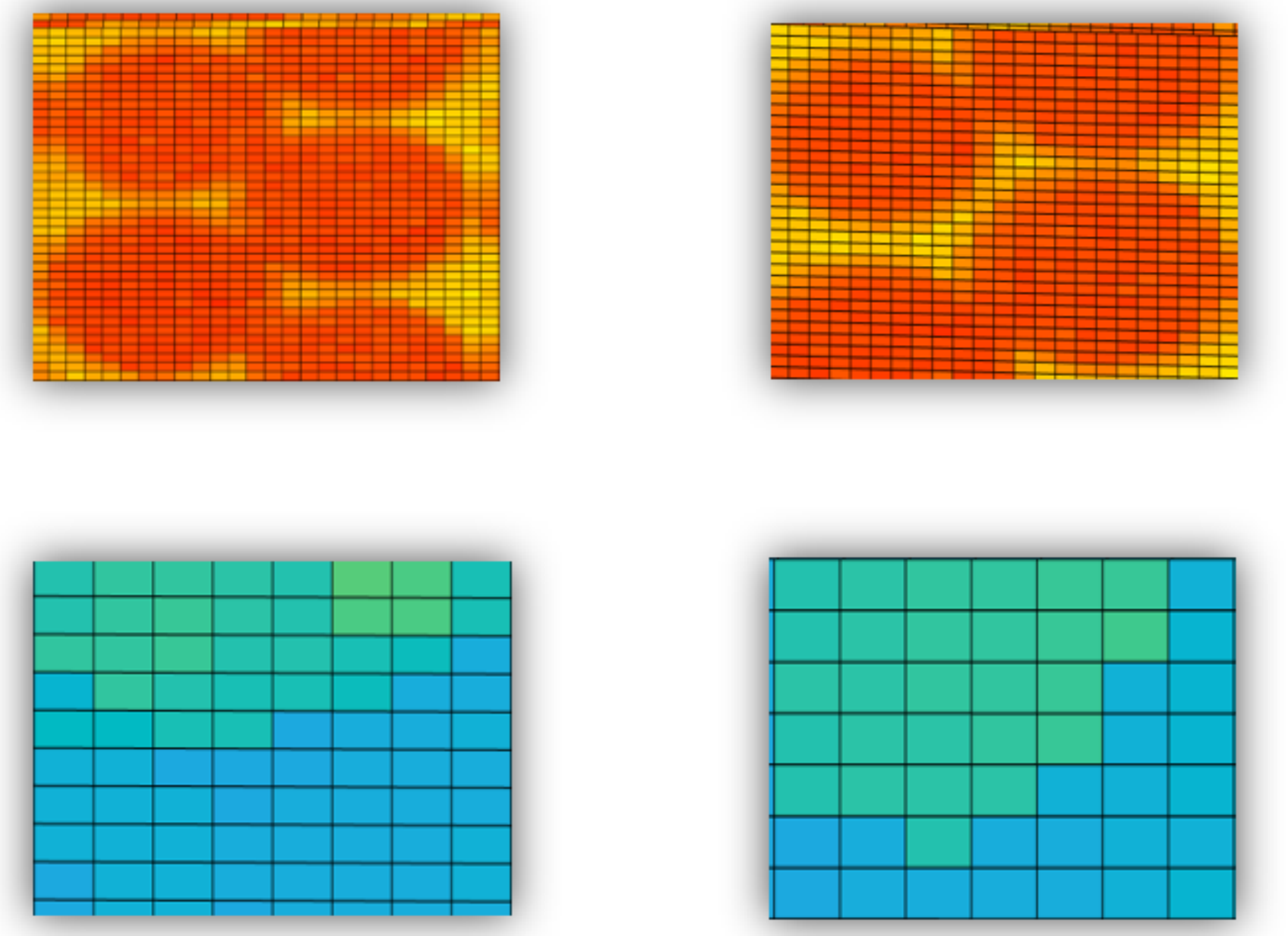} &     \includegraphics[width=3.2in]{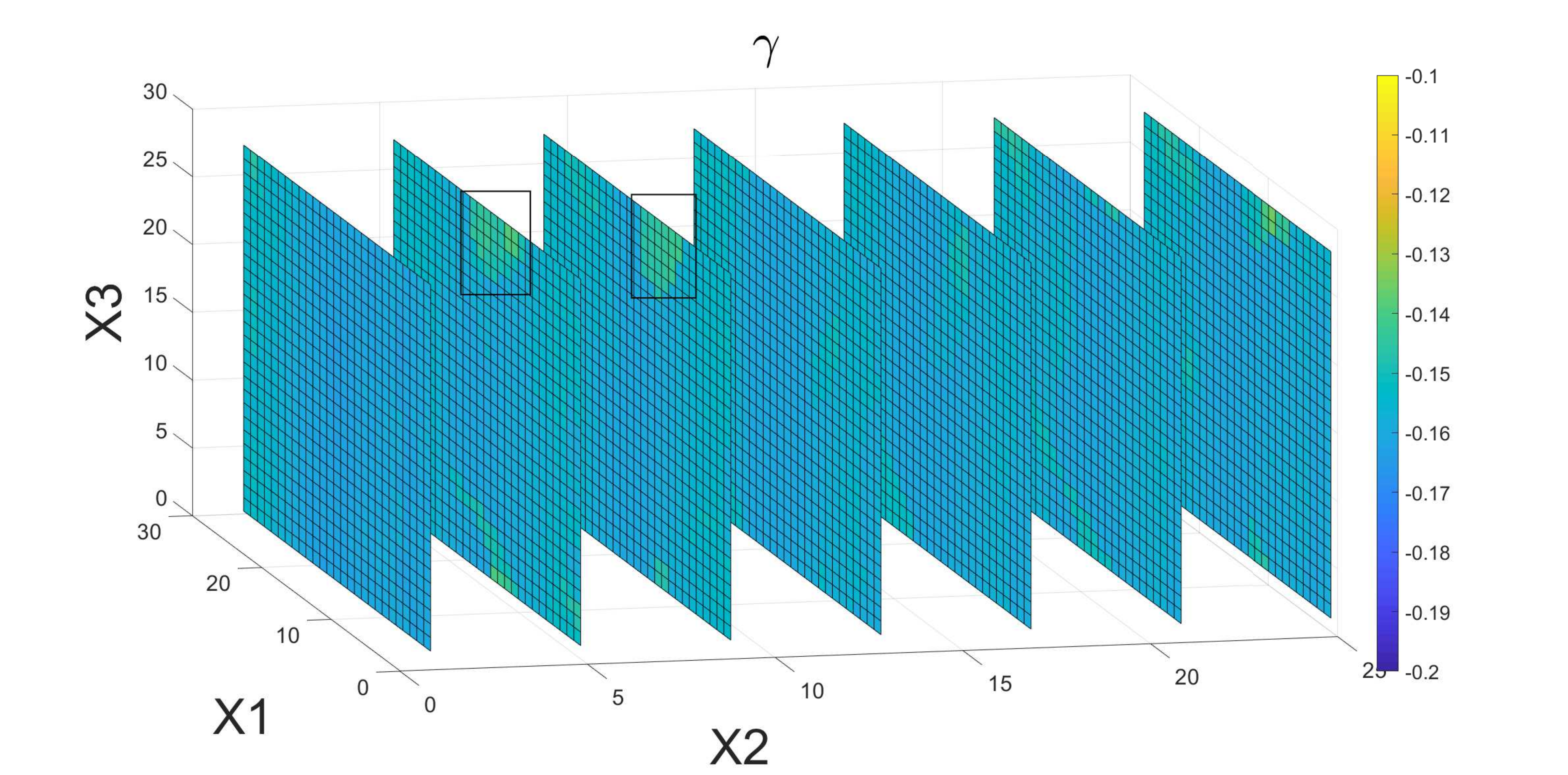} \\
 (c) & (d)
    \end{tabular}
  \end{center}
  \caption{(a) a colloidal particle image $f(x)$. (b) $2D$ slices of the $3D$ data. (c) zoomed-in images of the boxed regions of (b) and (d). Crystal images on top and their Euler angles $\gamma(x)$ at the botoom. (d) The Euler angles $\gamma(x)$ of the $3D$ colloidal particle image in (a) showed in the form of $2D$ slices. }
  \label{fig:rd1}
\end{figure}

\section{Conclusion}
\label{sec:con}

This paper has proposed a framework for  atomic resolution crystal image analysis in $3D$ based on a new $3D$ fast synchrosqueezed transform. It has been shown that the proposed methods are able to provide robust and reliable estimates of mesoscopic and microscopic properties, e.g., crystal defects, rotations, elastic deformations,
and grain boundaries in various synthetic and real data.  We focus on the analysis of images with the presence of only one type of known crystal lattice and without solid and
liquid interfaces in this paper. The extension is simple following the work in \cite{Lu2018}. The proposed method can be a standalone algorithm for crystal image analysis; it could also be applied to create training database for deep learning approaches because it is impractical (or even impossible) to create training database manually for $3D$ data.

{\bf Acknowledgments.} H. Yang thanks the support of the start-up grant by the Department of Mathematics at the National University of Singapore.

\bibliographystyle{unsrt}
\bibliography{Reference}
\end{document}